\theoremstyle{defin}
\newtheorem{defin}{Definition}
\theoremstyle{theorem}
\newtheorem{theorem}{Theorem}
\theoremstyle{prop}
\newtheorem{prop}{Proposition}
\theoremstyle{lemma}
\newtheorem{lemma}{Lemma}
\theoremstyle{ex}
\theoremstyle{col}
\providecommand{\keywords}[1]
{
  \small
  \textbf{\textit{Keywords---}} #1
}
\author[1,2]{Daniel Rogozin}
\affil[1]{Institute for Information Transmission Problems, RAS, Moscow, Russia}
\affil[2]{Serokell O\"{U}, Tallinn, Estonia}
\date{}
\title{Categorical and Algebraic Aspects of the Intuitionistic Modal Logic $\operatorname{IEL}^{-}$ and its Predicate Extensions}
\begin{document}

\maketitle

\begin{abstract}
The system of intuitionistic modal logic ${\bf IEL}^{-}$ was proposed by S. Artemov and T. Protopopescu as the intuitionistic version of belief logic \cite{Artemov}. We construct the modal lambda calculus which is Curry-Howard isomorphic to ${\bf IEL}^{-}$ as the type-theoretical representation of applicative computation widely known in functional programming.
We also provide a categorical interpretation of this modal lambda calculus considering coalgebras associated with a monoidal functor on a cartesian closed category.

Finally, we study Heyting algebras and locales with corresponding operators. Such operators are used in point-free topology as well. We study compelete Kripke-Joyal-style semantics for predicate extensions of ${\bf IEL}^{-}$ and related logics using Dedekind-MacNeille completions and modal cover systems introduced by Goldblatt \cite{goldblatt2011cover}. The paper extends the conference paper published in the LFCS'20 volume \cite{rogozin2020modal}.
\end{abstract}

\keywords{Intuitionistic modal logic, Modal type theory, Functional programming, Locales, Prenucleus,
Cover systems}

\section{Introduction}

Intuitionistic modal logic study extensions of intuitionistic logic with modal operators. One may consider such extensions from two directions. The first perspective is a consideration of intuitionistic modal logic as a branch of modal logic. Here, intuitionistic modalities might be interpreted as a constructive necessity, provability in Heyting arithmetics, intuitionistic knowledge, and so on. The second perspective is type-theoretic. This provides a more computational interpretation of intuitionistic modalities. Each value in an arbitrary computation is annotated with the relevant data type.

The first perspective arises to Prior, who introduced the system called ${\bf MIPC}$ \cite{prior2003time} to investigate modal counterparts of intuitionistic monadic logic. Bull \cite{bull1997mipc} and Ono \cite{ono1977some} studies the relation between intuitionistic modalities and quantifiers. Bezhanishvili examined quite comprehensively monadic Heyting algebras, see, for instance, \cite{bezhanishvili1998varieties}.

Fischer-Servi provided an intuitionistic analogue of the minimal normal modal logic containing $\Box$ and $\Diamond$ as mutually inexpressible connectives \cite{servi1977modal}.

Williamson \cite{williamson1992intuitionistic} discussed the question of intuitionistic epistemic modalities considering the problem of an intuitionist knowledge by means of the capability of verification. Artemov and Protopopescu developed this direction further, see \cite{Artemov} and \cite{protopopescu2015intuitionistic}.

We also emphasise briefly the direction related to Heyting algebras with operators. Heyting algebras with Fischer-Servi modal operators have a topological duality piggybacked on Esakia's results \cite{esakia2019heyting}. This duality provides the explicit description of general descriptive frames for extensions of intuitionistic modal logic containing the Fischer-Servi logic, see the paper by Palmigiano \cite{palmigiano2004dualities}. Macnab examined the class of Heyting algebras nuclei \cite{macnab1981modal}. We discuss nuclei closely in Section 5. Here we merely claim that logic of Heyting algebras with nuclei and their predicate extensions was investigated by Bezhanishvili and Ghilardi \cite{bezhanishvili2007algebraic}; Goldblatt \cite{goldblatt1981grothendieck} \cite{goldblatt2011cover}; Fairtlough, Mendler, and Walton \cite{fairtlough1997propositional} \cite{fairtlough1997quantified}.

We refer the reader to this paper by Wolter and Zakharyaschev \cite{wolter1999intuitionistic}, the paper by Bo{\v{z}}i{\'c} and Do{\v{s}}en \cite{bovzic1984models}, and the monograph by Simpson \cite{simpson1994proof}. These works contain the underlying results in model-theoretic aspects of intuitionistic modal logic.

The second perspective we emphasise above is related to intuitionistic modalities in a computational landscape. The Curry-Howard correspondence provides bridges between intuitionistic proofs and programs understood in a type-theoretical sense \cite{Neder}.

Modal lambda calculi often correspond to certain intuitionistic modal logics, see the papers by Artemov \cite{artemov2003embedding}; Bierman and de Paiva \cite{bierman2000intuitionistic}; Davies and Pfenning \cite{davies2001modal}; Fairtlough and Mendler\cite{fairtlough2000logical}, etc. De Paiva and Ritter \cite{DEPAIVA2016143} provided a categorical framework for such type theories.

One may motivate modal types with functional programming. Let us observe a sort of computation called monadic. A monad is a concept in functional programming implemented in the functional language called Haskell. Moggi examined monads type-theoretically \cite{moggi1991notions}. Very informally, a monad is a method of structuring computations as linearly connected chain of actions within such types as the list or the input/output (\verb"IO"). Such sequences are often called \emph{pipelines} in which one passes a value from an external world and yield a result after some series of actions. There is a way to consider computational monads logically within intuitionistic modal logic.

Functional languages such as Haskell have so-called type classes \footnote{In Haskell, type class is a general interface for some special group of data types consising of a group of methods for those types.} for computation within an environment. By \emph{computational context} (or, \emph{environment}), we mean some, roughly speaking, type-level map $f$, where $f$ is a
``function'' from $*$ to $*$: such a type-level map takes a simple type which has kind $*$ and yields another simple type of kind $*$. See \cite{vytiniotis2013evidence} and \cite{weirich2013system} to have a more detailed description of Haskell-like type systems.

Here, the underlying type class is \verb"Functor" which has the following formal definition:

\begin{lstlisting}[language=Haskell]
  class Functor f where
    fmap :: (a -> b) -> f a -> f b
\end{lstlisting}

\verb"Functor" is a generalisation of higher-order functions as \verb"map" thay merely yields an image of a list by a given function. Generally, \verb"Functor" provides a uniform method to carry unary functions through parametrised types. In other words, the notion of a functor in functional programming is inspired by categorical functor to a certain degree.

One may extend a functor to the so-called monad a bit inspired by categorical Kleisli triples. In Haskell-like languages, one also has the type class called \verb"Monad", a type class of an abstract data type of action in some computational environment. Here we define the \verb"Monad" type class as follows:

\begin{lstlisting}[language=Haskell]
  class Functor m => Monad m where
    return :: a -> m a
    (>>=)  :: m a -> (a -> m b) -> m b
\end{lstlisting}

\verb"Monad" is a type class that extends \verb"Functor" with two methods called \verb"return" and \verb"(>>=)" (a monadic bind). Monads present a uniform technique for various computations such as computation with a mutable state, many-valued computation, side effect input-output computation, etc. All those kinds are arranged as pipelines. Historically, monads were implemented in Haskell to process side-effects that arise in the input/output world. The advantage of a monad is an ability to isolate side-effects within a monad remaining code functional. That is, one has a tool to describe a sequence of actions, where the result of each step depends on previous ones. In other words, one has so-called the monadic binding.

Monadic metalanguage is the modal lambda calculus that describes a computation within an abstract monad \cite{moggi1991notions}. More formally, this is a modal extension of the simply-typed lambda calculus Curry-Howard isomorphic to the lax logic, the logic of Heyting algebras with a nucleus operator that we discussed above. The typing rules for modalities of this metalanguage correspond to the \verb"return" and the monadic bind methods.

Let us take a look at the example of a monad. There is the parametrised data type \verb"Maybe" in Haskell. \verb"Maybe" allows one to redefine a partial function as total:
\begin{lstlisting}[language=Haskell]
data Maybe a = Nothing | Just a
\end{lstlisting}
This data type consists of two constructors. Suppose we deal with some computation that might terminate with a failure. \verb"Nothing" is a flag that claims this failure arose. The second constructor \verb"Just" stores some value of $a$, that is, a result of a computation that terminated successfully.

The \verb"Maybe" instance of \verb"Monad" is the following one:
\begin{lstlisting}[language=Haskell]
instance Monad Maybe where
  return = Just
  (Just v) >>= f = f v
  Nothing  >>= f = Nothing
\end{lstlisting}

Here, the \verb"return" method merely embeds any value of $a$ into $\verb"Maybe" \: a$. The implementation of a monadic bind for \verb"Maybe" is also quite simple. Suppose one has a function $f$ of type $a \to \verb"Maybe " b$ and some value $x$ of type $\verb"Maybe " a$. Here we match on $x$. If $x$ is \verb"Nothing", then the monadic bind yields \verb"Nothing". Otherwise, we extract the value of type $a$ and apply a given function.

The monad interface for \verb"Maybe" allows one to perform sequences of actions, where some values might be undefined. If all values are well defined on each step, then the result of an execution is a term of the form \verb"Just v". Otherwise, if something went wrong and we have no required value somewhere, then the computation terminates with \verb"Nothing". The other examples of \verb"Monad" instances have more or less the same explanation.

Let us discuss the \verb"Applicative" class. Paterson and McBride proposed this class to describe effectful applicative programming \cite{McP}. One may consider the \verb"Applicative" type class as an intermediate one between \verb"Functor" and \verb"Monad". See this paper to have a more precise understanding of the connection between applicative functors and monads \cite{lindley2011idioms}.

Here is the precise definition of \verb"Applicative":

\begin{lstlisting}[language=Haskell]
class Functor f => Applicative f where
  pure :: a -> f a
  (<*>) :: f (a -> b) -> f a -> f b
\end{lstlisting}

The main aim of an applicative functor to generalise functor for functions of an arbitrary arity, for instance:
\begin{lstlisting}[language=Haskell]
liftA2
  :: Applicative f
  => (a -> b -> c)
  -> f a -> f b -> f c
liftA2 f x y = ((pure f) <*> x) <*> y
\end{lstlisting}

\verb"liftA2" is a version of \verb"fmap" for an arbitrary two-argument function. It is clear that one may implement \verb"liftA3", \verb"liftA4", and \verb"liftAn" for each $n < \omega$.

In this paper, we consider applicative computation type-theoretically. The modal axioms of ${\bf IEL}^{-}$ and types of the \verb"Applicative" methods in Haskell-like languages are quite similar to each other. We investigate the relationship between intuitionistic epistemic logic ${\bf IEL}^{-}$ and applicative computation providing the modal lambda calculus Curry-Howard isomorphic to ${\bf IEL}^{-}$.

This calculus consists of the rules for the simply-typed lambda-calculus extended with the modal rules. We assume that the proposed modal lambda calculus axiomatises applicative computation. We provide a proof-theoretical view of this sort of computation and prove such metatheoretical properties as strong normalisation and confluence. The idea to consider applicative functors type-theoretically belongs to Krishnaswami \cite{Krishna}. We are going to develop his ideas considering the ${\bf IEL}^{-}$ from a computational perspective. Litak and coauthors \cite{litak2017negative} observed that the logic ${\bf IEL}^{-}$ might be treated as a logic of an applicative functor as well \footnote{John Connor (the City College of New York) also connected the intuitionistic epistemic logic ${\bf IEL}^{-}$ with propositional truncation in Homotopy Type Theory. He presented his results at the category theory seminar, the CUNY Graduate Centre. At the moment, there is only a video of that talk on YouTube.}.

We also analyse semantic aspects of ${\bf IEL}^{-}$ and related logics. We study categorical semantics for the provided modal lambda calculus and cover semantics for quantified versions of intuitionistic modal logic with ${\bf IEL}^{-}$-like modalities.

In other words, we study the logic ${\bf IEL}^{-}$ and its relatives from several perspectives. In this paper, we examine computational aspects of these logics as well as algebraic and semantic ones.

\section{The modal type theory based on ${\bf IEL}^{-}$}

Intuitionistic modal logic ${\bf IEL}^{-}$ was proposed by S. Artemov and T. Protopopescu \cite{Artemov}. According to the authors, ${\bf IEL}^{-}$ represents beliefs agreed with BHK-semantics of intuitionistic logic. ${\bf IEL}^{-}$ is a weaker version of the system ${\bf IEL}$ that formalises knowledge as provably consistent intuitionistic belief. This logic consists of the following axioms and derivation rules:
\begin{defin} Intuitionistic epistemic logic ${\bf IEL}^{-}$:

\begin{enumerate}
  \item $(\varphi \to (\psi \to \theta)) \to ((\varphi \to \psi) \to (\varphi \to \theta))$
  \item $\varphi \to (\psi \to \varphi)$
  \item $\varphi \to (\psi \to (\varphi \land \psi))$
  \item $\varphi_1 \land \varphi_2 \to \varphi_i$, $i = 1,2$
  \item $(\varphi \to \theta) \to ((\psi \to \theta) \to (\varphi \lor \psi \to \theta))$
  \item $\varphi_i \to \varphi_1 \lor \varphi_2$, $i = 1,2$
  \item $\bot \to \varphi$
  \item $\bigcirc (\varphi \to \psi) \to (\bigcirc \varphi \to \bigcirc \psi)$
  \item $\varphi \to \bigcirc \psi$
  \item From $\varphi \to \psi$ and $\varphi$ infer $\psi$ (Modus ponens).
\end{enumerate}
\end{defin}

The $\varphi \to \bigcirc \psi$ axiom is also called \emph{co-reflection}. One may consider this axiom as the principle that connects intuitionistic truth and intuitionistic knowledge. From a Kripkean point, ${\bf IEL}^{-}$ is the logic of all frames $\langle W, \leq, E \rangle$, where $\langle W, \leq \rangle$ is a partial order and $E$ is a binary ``knowledge'' relation, a subrelation of $\leq$. The relation $E$ obeys the following conditions:

\begin{enumerate}
  \item $E(w) \subseteq {\uparrow w}$ for each $w \in W$.
  \item $E(u) \subseteq E(w)$, if $w R u$.
\end{enumerate}

A model for ${\bf IEL}^{-}$ is a quadruple $\mathcal{M} = \langle W, \leq, E, \vartheta \rangle$, an extended intuitionistic Kripke model with the additional forcing relation for modal formulas defined via the relation $E$. As usual, $\vartheta$ is a valuation that maps evepy propositional letter to some upper cone.

The $\bigcirc$ connective has the ``necessity'' semantics:

\begin{center}
$\mathcal{M}, x \Vdash \bigcirc \varphi \Leftrightarrow \forall y \in E(x) \:\: \mathcal{M}, y \Vdash \varphi$.
\end{center}

${\bf IEL}$, the full epistemic intuitionistic logic, extends ${\bf IEL}^{-}$ as ${\bf IEL} = {\bf IEL}^{-} \oplus \bigcirc \varphi \to \neg \neg \varphi$. This additional axiom is often called the \emph{intuitionistic relfection principle}. An ${\bf IEL}$-frame is an ${\bf IEL}^{-}$ frame with the condition $E(u) \neq \emptyset$ for each $u \in W$. That is, a knowledge relation is serial.

Artemov and Protopopescu proved that these logics are Kripke-complete with the standard construction with the canonical model on prime theories \cite{Artemov}.

V. Krupski and A. Yatmanov investigated proof-theoretical and algorithmic aspects of the logic ${\bf IEL}$. In this paper \cite{krupski2016sequent}, they provided the sequent calculus for ${\bf IEL}$ and proved that the derivability problem of this calculus is PSPACE-complete. ${\bf IEL}^{-}$ is decidable as well since this logic has the finite model property, see the paper by Wolter and Zakharyaschev \cite{wolter1997intuitionistic}.

For further purposes, we define the natural deduction calculus for ${\bf IEL}^{-}$ that we call ${\bf NIEL}^{-}$. For simplicity, we restrict our language to $\to$, $\land$, and $\bigcirc$.

\begin{defin} The natural deduction calculus {\bf NIEL}$^{-}$ for ${\bf IEL}^{-}$ is an extension of the intuitionistic natural deduction calculus with the additional inference rules for modality:

\begin{center}
  \begin{prooftree}
    \AxiomC{$ $}
    \RightLabel{\bf{ax}}
    \UnaryInfC{$\Gamma, \varphi \vdash \varphi$}
  \end{prooftree}
\end{center}

\begin{minipage}{0.5\textwidth}
  \begin{flushleft}
  \begin{prooftree}
    \AxiomC{$\Gamma, \varphi \vdash \psi$}
    \RightLabel{$\to_I$}
    \UnaryInfC{$\Gamma \vdash \varphi \to \psi$}
  \end{prooftree}
  \begin{prooftree}
    \AxiomC{$\Gamma \vdash \varphi$}
    \AxiomC{$\Gamma \vdash \psi$}
    \RightLabel{$\land_I$}
    \BinaryInfC{$\Gamma \vdash \varphi \land \psi$}
  \end{prooftree}
  \begin{prooftree}
    \AxiomC{$\Gamma \vdash \varphi$}
    \RightLabel{${\bigcirc_I}_1$}
    \UnaryInfC{$\Gamma \vdash \bigcirc \varphi$}
\end{prooftree}
  \end{flushleft}
\end{minipage}
\begin{minipage}{0.5\textwidth}
  \begin{flushright}
  \begin{prooftree}
    \AxiomC{$\Gamma \vdash \varphi \to \psi$}
    \AxiomC{$\Gamma \vdash \varphi$}
    \RightLabel{$\to_E$}
    \BinaryInfC{$\Gamma \vdash \psi$}
  \end{prooftree}
  \begin{prooftree}
    \AxiomC{$\Gamma \vdash \varphi_1 \land \varphi_2$}
    \RightLabel{$\land_E, i = 1,2$}
    \UnaryInfC{$\Gamma \vdash \varphi_i$}
  \end{prooftree}
  \begin{prooftree}
  \AxiomC{$\Gamma \vdash \bigcirc \overrightarrow{\varphi}$}
  \AxiomC{$\overrightarrow{\varphi} \vdash \psi$}
  \RightLabel{${\bigcirc_I}_2$}
  \BinaryInfC{$\Gamma \vdash \bigcirc \psi$}
  \end{prooftree}
  \end{flushright}
\end{minipage}
\end{defin}

The first modal rule allows one to derive co-reflection and its consequences. The second modal rule is a counterpart of $\bigcirc_I$ rule in natural deduction calculus for constructive ${\bf K}$ (see \cite{ModalLa}). We will denote $\Gamma \vdash \bigcirc \varphi_1, \dots, \Gamma \vdash \bigcirc \varphi_n$ and $\varphi_1,\dots,\varphi_n \vdash \psi$ as $\Gamma \vdash \bigcirc \overrightarrow{\varphi}$ and $\overrightarrow{\varphi} \vdash \psi$ respectively for brevity.

It is straightforward to check that the second modal rule is equivalent to the ${\bf K}$-like $\bigcirc$-rule:
\begin{prooftree}
  \AxiomC{$\Gamma \vdash \varphi$}
  \UnaryInfC{$\bigcirc \Gamma \vdash \bigcirc \varphi$}
\end{prooftree}

Let us show that one may translate ${\bf NIEL}^{-}$ into ${\bf IEL}^{-}$ as follows:

  \begin{lemma}
    $\Gamma \vdash_{{\bf NIEL}^{-}} \varphi \Rightarrow {\bf IEL}^{-}_{\to, \land, \bigcirc} \vdash \bigwedge \Gamma \rightarrow \varphi$.
  \end{lemma}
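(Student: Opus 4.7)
The plan is a routine induction on the height of the ${\bf NIEL}^{-}$ derivation of $\Gamma \vdash \varphi$, working through each of the rules in Definition~2. The propositional cases are standard: for the axiom rule one uses the derivability in ${\bf IEL}^{-}$ of $\bigwedge(\Gamma,\varphi) \to \varphi$; the $\to_I,\to_E,\land_I,\land_E$ cases reduce, via the induction hypothesis, to purely intuitionistic manipulations (currying/uncurrying and composition with the projection axioms). These contribute no new content beyond what already holds for natural deduction versus Hilbert-style calculi for intuitionistic propositional logic.

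Before turning to the modal rules, I would establish two auxiliary derivabilities in ${\bf IEL}^{-}$ that handle the essential modal content. First, a \emph{necessitation} lemma: if ${\bf IEL}^{-} \vdash \chi$ then ${\bf IEL}^{-} \vdash \bigcirc \chi$. This is immediate from co-reflection $\chi \to \bigcirc \chi$ and modus ponens. Second, a \emph{distribution} lemma: for all formulas $\chi_1,\dots,\chi_n$,
\[
{\bf IEL}^{-} \vdash \bigcirc\chi_1 \land \dots \land \bigcirc\chi_n \to \bigcirc(\chi_1 \land \dots \land \chi_n).
\]
For $n=2$ this is obtained by starting from the intuitionistic tautology $\chi_1 \to (\chi_2 \to \chi_1 \land \chi_2)$, applying necessitation, and then applying axiom~8 twice to pull the two $\bigcirc$'s outside the implication; the general case follows by an inner induction on $n$.

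With these in hand the two modal rules go through cleanly. For ${\bigcirc_I}_1$, the induction hypothesis gives $\bigwedge\Gamma \to \varphi$, and composing with the instance $\varphi \to \bigcirc\varphi$ of co-reflection yields $\bigwedge\Gamma \to \bigcirc\varphi$. For ${\bigcirc_I}_2$, the induction hypotheses give $\bigwedge\Gamma \to \bigcirc\varphi_i$ for each $i \le n$ and $\varphi_1 \land \dots \land \varphi_n \to \psi$. Conjoining the first family produces $\bigwedge\Gamma \to \bigcirc\varphi_1 \land \dots \land \bigcirc\varphi_n$; the distribution lemma then yields $\bigwedge\Gamma \to \bigcirc(\varphi_1 \land \dots \land \varphi_n)$. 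Applying necessitation to $\varphi_1 \land \dots \land \varphi_n \to \psi$ and then axiom~8 gives $\bigcirc(\varphi_1 \land \dots \land \varphi_n) \to \bigcirc\psi$, and composing delivers $\bigwedge\Gamma \to \bigcirc\psi$, as required.

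The only non-trivial step, and the place to be careful, is the distribution lemma: it is the modal analogue of currying/uncurrying through a normal modal operator, and it is what lets one simulate the vector form of ${\bigcirc_I}_2$ using only the unary axiom~8. Once it is established, the whole induction is essentially mechanical.
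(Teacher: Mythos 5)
Your proposal is correct and follows essentially the same route as the paper's proof: induction on the derivation, handling ${\bigcirc_I}_1$ by composing the induction hypothesis with co-reflection, and ${\bigcirc_I}_2$ by conjoining the premises, applying the distribution of $\bigcirc$ over finite conjunctions, and using co-reflection (your necessitation) together with axiom~8 on the side premise. The only difference is that you spell out the proof of the distribution lemma, which the paper simply cites as an ${\bf IEL}^{-}$ theorem.
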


  \begin{proof}
Induction on the derivation. Let us consider the modal cases.

\begin{enumerate}
  \item If $\Gamma \vdash_{{\bf NIEL}^{-}} \varphi$, then ${\bf IEL}^{-}_{\to, \land, \bigcirc} \vdash \bigwedge \Gamma \rightarrow \bigcirc \varphi$.

  \vspace{\baselineskip}

$\begin{array}{lll}
(1) & \bigwedge \Gamma \rightarrow \varphi & \text{assumption}\\
(2) & \varphi \rightarrow \bigcirc \varphi &\text{co-reflection}\\
(3) & (\bigwedge \Gamma \rightarrow \varphi) \rightarrow ((\varphi \rightarrow \bigcirc \varphi) \rightarrow (\bigwedge \Gamma \rightarrow \bigcirc \varphi))&\text{IPC theorem}\\
(4) & (\varphi \rightarrow \bigcirc \varphi) \rightarrow (\bigwedge \Gamma \rightarrow \bigcirc \varphi) &\text{from (1), (3) and MP}\\
(5) & \bigwedge \Gamma \rightarrow \bigcirc \varphi &\text{from (2), (4) and MP}\\
\end{array}$

  \vspace{\baselineskip}

\item If $\Gamma \vdash_{{\bf NIEL}^{-}} \bigcirc \overrightarrow{\varphi}$ and $\overrightarrow{A} \vdash \psi$, then ${\bf IEL}^{-}_{\to, \land, \bigcirc} \vdash \bigwedge \Gamma \rightarrow \bigcirc \psi$.

  \vspace{\baselineskip}

$\begin{array}{lll}
(1) &\bigwedge \Gamma \rightarrow \bigcirc \varphi_1, \dots, \bigwedge \Gamma \rightarrow \bigcirc \varphi_n & \text{assumption} \\
(2) &\bigwedge \Gamma \rightarrow \bigwedge \limits_{i = 1}^{n} \bigcirc \varphi_i & \text{{\bf IEL}$^{-}$ theorem} \\
(3) &\bigwedge \limits_{i = 1}^{n} \bigcirc \varphi_i \rightarrow \bigcirc \bigwedge \limits_{i = 1}^{n} \varphi_i & \text{{\bf IEL}$^{-}$ theorem} \\
(4) &\bigwedge \Gamma \rightarrow \bigcirc \bigwedge \limits_{i = 1}^{n} \varphi_i & \text{from (2), (3) and transitivity} \\
(5) &\bigwedge \limits_{i = 1}^{n} \varphi_i \rightarrow \psi& \text{assumption} \\
(6) &(\bigwedge \limits_{i = 1}^{n} \varphi_i \rightarrow \psi) \rightarrow \bigcirc (\bigwedge \limits_{i = 1}^{n} \varphi_i \rightarrow \psi)& \text{co-reflection}\\
(7) &\bigcirc (\bigwedge \limits_{i = 1}^{n} \varphi_i \rightarrow \psi)& \text{from (5), (6) and MP} \\
(8) &\bigcirc \bigwedge \limits_{i = 1}^{n} \varphi_i \rightarrow \bigcirc \psi & \text{from (7) and normality} \\
(9) &\bigwedge \Gamma \rightarrow \bigcirc \psi & \text{from (4), (8) and transitivity}
\end{array}$
\end{enumerate}

  \end{proof}

\begin{lemma} If ${\bf IEL}^{-}_{\to, \land, \bigcirc} \vdash A$, then ${\bf NIEL}^{-} \vdash A$.
\end{lemma}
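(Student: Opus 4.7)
The plan is to proceed by induction on the length of a Hilbert-style derivation in ${\bf IEL}^{-}_{\to,\land,\bigcirc}$, showing that each axiom is derivable in ${\bf NIEL}^{-}$ (with empty context, hence as a theorem) and that modus ponens is admissible. For modus ponens, the case is immediate: the induction hypothesis gives $\vdash_{{\bf NIEL}^{-}} \varphi \to \psi$ and $\vdash_{{\bf NIEL}^{-}} \varphi$, and the rule $\to_E$ yields $\vdash_{{\bf NIEL}^{-}} \psi$. The purely propositional axioms (1)--(7) are routine and well-known natural-deduction exercises using $\to_I$, $\to_E$, $\land_I$, $\land_E$, together with the (assumed) disjunction and falsum rules that one silently imports when restricting the language; since the statement restricts attention to the fragment $\to,\land,\bigcirc$, only axioms (1)--(4) really need to be checked, and each follows in two or three steps from the $\to_I, \to_E, \land_I, \land_E$ rules applied to the axiom rule.

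The real content is the two modal axioms. For co-reflection $\varphi \to \bigcirc \varphi$, the derivation is
\begin{prooftree}
\AxiomC{$ $}
\RightLabel{\bf{ax}}
\UnaryInfC{$\varphi \vdash \varphi$}
\RightLabel{${\bigcirc_I}_1$}
\UnaryInfC{$\varphi \vdash \bigcirc \varphi$}
\RightLabel{$\to_I$}
\UnaryInfC{$\vdash \varphi \to \bigcirc \varphi$}
\end{prooftree}
which is immediate from the first modal rule.

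For normality $\bigcirc(\varphi \to \psi) \to (\bigcirc \varphi \to \bigcirc \psi)$, I would use ${\bigcirc_I}_2$ with two modal premises. Set $\Gamma = \bigcirc(\varphi \to \psi), \bigcirc \varphi$ and $\overrightarrow{\chi} = (\varphi \to \psi), \varphi$. By \textbf{ax} we have $\Gamma \vdash \bigcirc(\varphi \to \psi)$ and $\Gamma \vdash \bigcirc \varphi$, i.e.\ $\Gamma \vdash \bigcirc \overrightarrow{\chi}$; and by $\to_E$ applied to the two axiom instances on $\overrightarrow{\chi}$ we have $(\varphi \to \psi), \varphi \vdash \psi$. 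Applying ${\bigcirc_I}_2$ yields $\Gamma \vdash \bigcirc \psi$, and two applications of $\to_I$ discharge the assumptions to produce the desired theorem.

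The only mild obstacle is the bookkeeping in the normality case: one has to recognise that the multi-premise modal rule ${\bigcirc_I}_2$ is precisely designed for exactly this situation, with the list $\overrightarrow{\chi}$ being the ``un-boxed'' antecedents. Once this is seen, the induction closes; combined with the previous lemma this establishes that $\Gamma \vdash_{{\bf NIEL}^{-}} \varphi$ iff ${\bf IEL}^{-}_{\to,\land,\bigcirc} \vdash \bigwedge \Gamma \to \varphi$, and in particular the theoremhoods of the two systems coincide on the $\to,\land,\bigcirc$ fragment.
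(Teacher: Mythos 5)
Your proposal is correct and matches the paper's approach: the paper also dispatches this lemma by deriving the two modal axioms in ${\bf NIEL}^{-}$ (deferring the details to the two term derivations given later, namely $\lambda x.\,{\bf pure}\:x$ for co-reflection and $\lambda f.\lambda x.\,{\bf let}\:\bigcirc g,y = f,x\:{\bf in}\:g\,y$ for normality, which are exactly your two natural-deduction trees with proof terms attached). Your handling of ${\bigcirc_I}_2$ with the two-element list of unboxed antecedents is precisely the intended use of that rule.
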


  \begin{proof}
    A straightforward derivation of modal axioms in ${\bf NIEL}^{-}$. We will consider those derivations via terms below.
  \end{proof}

One may enrich this natural deduction calculus with the well-known inference rules for disjunction and bottom and prove the same lemmas as above. We build further the typed lambda-calculus based on the ${\bf NIEL}^{-}$ by proof-assignment in the inference rules.

Let us define terms and types to introduce the modal typed lambda-calculus.

\begin{defin} The set of terms:

Let $\mathbb{V} = \{ x, y, z, \dots \}$ be the set of variables, the following grammar generates the set $\Lambda_{\bigcirc}$ of terms:

\begin{center}
$\Lambda_{\bigcirc} ::= \mathbb{V} \: | \:  (\lambda \mathbb{V}.\Lambda_{\bigcirc}) \: | \: (\Lambda_{\bigcirc} \Lambda_{\bigcirc}) \: | \: (\langle \Lambda_{\bigcirc} , \Lambda_{\bigcirc} \rangle) \: | \: (\pi_1 \Lambda_{\bigcirc}) \: | \: (\pi_2 \Lambda_{\bigcirc}) \: | \: ({\bf pure \: } \:\Lambda_{\bigcirc}) \: | \: ({\bf let \:} \bigcirc \mathbb{V}^{*} = \Lambda_{\bigcirc}^{*} \:\: {\bf in} \:\: \Lambda_{\bigcirc})$
\end{center}

\end{defin}
where $\mathbb{V}^{*}$ and $\Lambda_{\bigcirc}^{*}$ denote the set of finite sequences of variables $\cup_{i < \omega} \mathbb{V}^i$ and the set of finite sequences of terms $\cup_{i < \omega} \Lambda_{\bigcirc}^i$ respectively. In the term $({\bf let \:} \bigcirc \overrightarrow{x} = \overrightarrow{M} {\: \bf in \:} N)$, the sequence of variables $\overrightarrow{x}$ and the sequence of terms $\overrightarrow{M}$ should have the same length. Otherwise, such a term is not well-formed.

As we discuss below, the terms of the form ${\bf let \:} \bigcirc \overrightarrow{x} = \overrightarrow{M} {\: \bf in \:} N$ correspond to the special local binding.

\begin{defin} The set of types:

Let $\mathbb{T} = \{ p_0, p_1, \dots \}$ be the set of atomic types, the set $\mathbb{T}_{\bigcirc}$ of types is generated by the grammar:
\begin{center}
  $\mathbb{T}_{\bigcirc} ::= \mathbb{T} \: | \: (\mathbb{T}_{\bigcirc} \to \mathbb{T}_{\bigcirc}) \: | \: (\mathbb{T}_{\bigcirc} \times \mathbb{T}_{\bigcirc}) \: | \: (\bigcirc \mathbb{T}_{\bigcirc})$
\end{center}
\end{defin}

A context has the standard definition \cite{Neder} as a sequence of type declarations
$\Gamma = \{ x_0 : \varphi_1, \dots, x_n : \varphi_{n - 1} \}$. Here $x_i$ is a variable and $\varphi_i$ is a type for each $i < n < \omega$.

\begin{defin} The modal lambda calculus $\lambda_{{\bf IEL}^{-}}$:

  \begin{center}
  \begin{prooftree}
  \AxiomC{$ $}
  \RightLabel{\scriptsize{ax}}
  \UnaryInfC{$\Gamma, x : \varphi \vdash x : \varphi$}
  \end{prooftree}
  \end{center}

  \begin{minipage}{0.45\textwidth}
    \begin{prooftree}
    \AxiomC{$\Gamma, x : \varphi \vdash M : \psi$}
    \RightLabel{$\rightarrow_i$}
    \UnaryInfC{$\Gamma \vdash \lambda x. M : \varphi \to \psi$}
    \end{prooftree}

    \begin{prooftree}
    \AxiomC{ $\Gamma \vdash M : \varphi$ }
    \AxiomC{ $\Gamma \vdash N : \psi$ }
    \RightLabel{$\times_i$}
    \BinaryInfC{$\Gamma \vdash \langle M, N \rangle : \varphi \times \psi$}
    \end{prooftree}

    \begin{prooftree}
      \AxiomC{$\Gamma \vdash M : \varphi$}
      \RightLabel{$\bigcirc_I$}
      \UnaryInfC{$\Gamma \vdash {\bf pure \: } \: M : \bigcirc \varphi$}
    \end{prooftree}
\end{minipage}%
\hfill
\begin{minipage}{0.45\textwidth}
\begin{tabular}{p{\textwidth}}
  \begin{prooftree}
  \AxiomC{$\Gamma \vdash M : \varphi \to \psi$}
  \AxiomC{$\Gamma \vdash N : \varphi$}
  \RightLabel{$\rightarrow_e$}
  \BinaryInfC{$\Gamma \vdash M N : \psi$}
  \end{prooftree}

  \begin{prooftree}
  \AxiomC{ $\Gamma \vdash M : \varphi_1 \times \varphi_2$ }
  \RightLabel{$\times_e$, $i = 1, 2$}
  \UnaryInfC{$\Gamma \vdash \pi_i M : \varphi_i$}
  \end{prooftree}

  \begin{prooftree}
    \AxiomC{$\Gamma \vdash \overrightarrow{M} : \bigcirc \overrightarrow{\varphi}$}
    \AxiomC{$\overrightarrow{x} : \overrightarrow{A} \vdash N : \psi$}
    \RightLabel{$\text{let}_{\bigcirc}$}
    \BinaryInfC{$\Gamma \vdash {\bf let \:} \bigcirc \overrightarrow{x} = \overrightarrow{M} {\: \bf in \: } N : \bigcirc \psi$}
  \end{prooftree}
\end{tabular}
\end{minipage}%
\end{defin}

$\Gamma \vdash \overrightarrow{M} : \bigcirc \overrightarrow{\varphi}$ is a short form for the sequence $\Gamma \vdash M_1 : \bigcirc \varphi_1,\dots,\Gamma \vdash M_n : \bigcirc \varphi_n$ and $\overrightarrow{x} : \overrightarrow{\varphi} \vdash N : \psi$ is a short form for $x_1 : \varphi_1, \dots, x_n : \varphi_n \vdash N : B$.
We use this short form instead of ${\bf let \:} \bigcirc x_1,\dots,x_n = M_1,\dots,M_n {\: \bf in \:} N$. The $\bigcirc_I$-typing rule is the same as $\bigcirc$-introduction in monadic metalanguage \cite{Lax}. $\bigcirc_I$ injects an object of type $A$ into $\bigcirc$. According to this rule, the type constructor ${\bf pure}$ reflects the method \verb"pure" in the \verb"Applicative" class.

The rule $\text{let}_{\bigcirc}$ is similar to the $\bigcirc$-rule in typed lambda calculus for intuitionistic normal modal logic
${\bf IK}$, see \cite{ModalK1}. Informally, one may read ${\bf let \:} \bigcirc \overrightarrow{x} =
\overrightarrow{M} {\: \bf in \: } N$ as a simultaneous local binding in $N$, where each free variable of a term $N$ should be
binded with term of modalised type from $\overrightarrow{M}$. In other words, we modalise all free variables of a term $N$ and
``substitute'' them to the terms belonging to the sequence $\overrightarrow{M}$.

Our calculus extends the typed lambda calculus for {\bf IK} with $\bigcirc_I$-rule with the co-reflection rule allowing one to modalise any type of an arbitrary context.

Here are some examples:

\begin{prooftree}
\AxiomC{$x : \varphi \vdash x : \varphi$}
\RightLabel{$\bigcirc_I$}
\UnaryInfC{$x : \varphi \vdash {\bf pure \:} x : \bigcirc \varphi$}
\RightLabel{$\to_I$}
\UnaryInfC{$\vdash (\lambda x. {\bf pure \: } x) : \varphi \to \bigcirc \varphi$}
\end{prooftree}

\begin{small}
\begin{prooftree}
\AxiomC{$f : \bigcirc (\varphi \to \psi) \vdash f : \bigcirc (\varphi \to \psi)$}
\AxiomC{$x : \bigcirc \varphi \vdash x : \bigcirc \varphi$}
\AxiomC{$g : \varphi \to \psi \vdash g : \varphi \to \psi$}
\AxiomC{$y : \varphi \vdash \varphi : \psi$}
\RightLabel{$\to_e$}
\BinaryInfC{$g : \varphi \to \psi, y : \varphi \vdash g y : \psi$}
\RightLabel{$\text{let}_{\bigcirc}$}
\TrinaryInfC{$f : \bigcirc (\varphi \to \psi), x : \bigcirc \varphi \vdash {\bf let \:} \bigcirc g, y  = f, x {\: \bf in \:} g y : \bigcirc \psi$}
\RightLabel{$\to_I$}
\UnaryInfC{$f : \bigcirc (\varphi \to \psi) \vdash \lambda x. {\bf let \:} \bigcirc g, y = f, x {\: \bf in \:} g y : \bigcirc \varphi \to \bigcirc \psi$}
\RightLabel{$\to_I$}
\UnaryInfC{$\vdash \lambda f. \lambda x. {\bf let \:} \bigcirc g, y = f, x {\: \bf in \:} g y : \bigcirc (\varphi \to \psi) \to \bigcirc \varphi \to \bigcirc \psi$}
\end{prooftree}
\end{small}

Here we provided the derivations for modal axioms of ${\bf IEL}^{-}$. In fact, we proved Lemma 2 using proof-assignment.

\vspace{\baselineskip}

Now we define free variables and substitutions:

\begin{defin} The set $FV(M)$ of free variables for a term $M$:

\begin{enumerate}
\item $FV(x) = \{ x \}$.
\item $FV(\lambda x. M) = FV(M) \setminus \{ x\}$.
\item $FV(M N) = FV(M) \cup FV(N)$.
\item $FV(\langle M,N \rangle) = FV(M) \cup FV(N)$.
\item $FV(\pi_i M) = FV(M)$, $i = 1, 2$.
\item $FV(\text{\bf pure } M) = FV(M)$.
\item $FV({\bf let \:} \bigcirc \: \overrightarrow{x} = \overrightarrow{M} {\: \bf in \:} N) = \cup_{i = 1}^n FV(M), \text{where $n = |\overrightarrow{M}|$}$.
\end{enumerate}
\end{defin}

\begin{defin} Substitution:

\begin{enumerate}
\item $x [x := N] = N$, $x [y := N] = x$.
\item $(M N) [x := N] = M[x := N] N [x := N]$.
\item $(\lambda x. M) [y := N] = \lambda x. M [y := N]$, $y \in FV(M)$.
\item $(M, N)[x := P] = (M[x := P], N [x := P])$.
\item $(\pi_i M) [x := P] = \pi_i (M[x := P])$, $i = 1, 2$.
\item $({\bf pure \: } M) [x := P] = {\bf pure \: } (M [x := P])$.
\item $({\bf let \:} \bigcirc \overrightarrow{x} = \overrightarrow{M} {\: \bf in \:} N) [y := P] = {\bf let \:} \bigcirc \overrightarrow{x} = (\overrightarrow{M} [y := P]) {\: \bf in \:} N$.
\end{enumerate}
\end{defin}

Substitutions and free variables for terms of the kind ${\bf let \:} \bigcirc \overrightarrow{x} = \overrightarrow{M} {\: \bf in \:} N$ are defined similarly to \cite{ModalK1}.
That is, we do not take into account free variables of $N$ because those variables occur in the list $\overrightarrow{x}$ and are eliminated by the assignment $\overrightarrow{x} = \overrightarrow{M}$.

The reduction rules are the following ones:

  \begin{defin} $\beta$-reduction rules for $\lambda_{{\bf IEL}^{-}}$.

\begin{enumerate}
  \item $(\lambda x. M) N \rightarrow_{\beta} M [x := N]$.
  \item $\pi_1 \langle M, N \rangle \rightarrow_{\beta} M$.
  \item $\pi_2 \langle M, N \rangle \rightarrow_{\beta} N$.
  \item ${\bf let \:} \bigcirc \overrightarrow{x}, y, \overrightarrow{z} = \overrightarrow{M}, {\bf let \:} \bigcirc \overrightarrow{w} = \overrightarrow{N} {\: \bf in \: } Q, \overrightarrow{P} {\: in \:} R \rightarrow_{\beta} \\
  {\bf let \:} \bigcirc \overrightarrow{x}, \overrightarrow{w}, \overrightarrow{z} = \overrightarrow{M}, \overrightarrow{N}, \overrightarrow{P} {\: \bf in \: } R [y := Q]$.
  \item ${\bf let \:} \bigcirc \overrightarrow{x} = {\bf pure \:} \overrightarrow{M} {\: \bf in \:} N \rightarrow_{\beta} {\bf pure \:} N [\overrightarrow{x} := \overrightarrow{M}]$.
  \item ${\bf let \:} \bigcirc \underline{\quad} = \underline{\quad} {\: \bf in \:} M \rightarrow_{\beta} {\bf pure \:} M$, where \underline{\quad} is an empty sequence of terms.
\end{enumerate}
\end{defin}

If $M$ reduces to $N$ by one of these rules, then we write $M \rightarrow_{r} N$. A multistep reduction $\twoheadrightarrow_{r}$ is a reflexive transitive closure of $\rightarrow_{r}$. $=_r$ is a symmetric closure of $\twoheadrightarrow_{r}$. Now we formulate the standard lemmas.

\begin{prop} The generation lemma for $\bigcirc_I$. \label{Gen}

  Let $\Gamma \vdash {\bf pure \:} M : \bigcirc \varphi$, then $\Gamma \vdash M : \varphi$.
\end{prop}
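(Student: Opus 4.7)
The plan is a one-step inversion (generation) argument on the shape of the term and the set of available typing rules. I would proceed by induction on the derivation of $\Gamma \vdash {\bf pure \:} M : \bigcirc \varphi$, but the induction collapses immediately into a case analysis on the last rule applied, since the rules of $\lambda_{{\bf IEL}^{-}}$ are syntax-directed: each rule is uniquely determined by the outermost constructor of the subject term.

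First I would run through the six typing rules and observe that only one of them can yield a conclusion whose subject is of the form ${\bf pure \:} M$. The axiom rule produces a bare variable; $\to_i$ produces a $\lambda$-abstraction; $\times_i$ produces a pair $\langle -, - \rangle$; $\to_e$ produces an application; $\times_e$ produces a projection $\pi_i$; and $\text{let}_{\bigcirc}$ produces a ${\bf let \:}\bigcirc\ldots\,{\bf in}\,-$ term. None of these has the outer constructor ${\bf pure}$. Hence the only rule whose conclusion matches the judgement $\Gamma \vdash {\bf pure \:} M : \bigcirc \varphi$ is $\bigcirc_I$.

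Next I would read off the premise: the $\bigcirc_I$ rule
\begin{prooftree}
\AxiomC{$\Gamma' \vdash M' : \varphi'$}
\RightLabel{$\bigcirc_I$}
\UnaryInfC{$\Gamma' \vdash {\bf pure \: } M' : \bigcirc \varphi'$}
\end{prooftree}
forces, by matching the conclusion with $\Gamma \vdash {\bf pure \:} M : \bigcirc \varphi$, the identifications $\Gamma' = \Gamma$, $M' = M$, and $\varphi' = \varphi$. The premise is therefore exactly $\Gamma \vdash M : \varphi$, which is the required conclusion.

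There is really no obstacle here: the only thing to verify carefully is the syntax-directedness claim, i.e.\ that no rule other than $\bigcirc_I$ has a conclusion whose subject begins with the ${\bf pure}$ constructor, and this is immediate from Definition~5. The statement is robust under later extensions of the calculus (e.g.\ with disjunction or $\bot$-rules) as long as no additional rule is introduced whose conclusion has a ${\bf pure}$-headed subject.
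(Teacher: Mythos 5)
Your proposal is correct and is exactly the argument the paper leaves implicit behind its one-word proof (``Straightforwardly''): since the typing rules are syntax-directed and only $\bigcirc_I$ concludes with a ${\bf pure}$-headed subject, inversion on the last rule immediately yields the premise $\Gamma \vdash M : \varphi$. Nothing further is needed.
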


\begin{proof}
  Straightforwardly.
\end{proof}

\begin{lemma} Basic lemmas. \label{Struc}

\begin{enumerate}
  \item If $\Gamma \vdash M : \varphi$ and $\Gamma \subseteq \Delta$, then $\Delta \vdash M : \varphi$.
  \item If $\Gamma \vdash M : \varphi$, then $\Delta \vdash M : \varphi$, where $\Delta = \{ x : \psi \: | \: (x : \psi) \in \Gamma \:\: \& \:\: x \in FV(M) \}$.
  \item If $\Gamma, x : \varphi \vdash M : \phi$ and $\Gamma \vdash N : \varphi$, then $\Gamma \vdash M [x := N] : \psi$.
\end{enumerate}
\end{lemma}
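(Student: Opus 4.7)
The plan is to prove all three parts by induction on the structure of the typing derivation $\Gamma \vdash M : \varphi$. For each part, the non-modal cases (variable, abstraction, application, pairing, projections) follow the standard pattern from the simply typed $\lambda$-calculus, and the $\bigcirc_{I}$ (i.e.\ $\mathbf{pure}$) case is essentially trivial because it has a single premise with the same context $\Gamma$. The only place that requires care is the $\text{let}_{\bigcirc}$ rule, and even there the definitions of free variables and substitution have already been set up so that exactly the right thing happens.

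For part (1), weakening, I would proceed by case analysis on the last rule. In the $\text{let}_{\bigcirc}$ case the derivation ends with premises $\Gamma \vdash \overrightarrow{M} : \bigcirc \overrightarrow{\varphi}$ and $\overrightarrow{x} : \overrightarrow{A} \vdash N : \psi$; I apply the inductive hypothesis to each of the derivations $\Gamma \vdash M_i : \bigcirc \varphi_i$ to obtain $\Delta \vdash M_i : \bigcirc \varphi_i$, leave the second premise untouched (since its context does not involve $\Gamma$), and reapply $\text{let}_{\bigcirc}$. In the $\to_I$ case one must, as usual, $\alpha$-rename to avoid clash with any variable newly appearing in $\Delta \setminus \Gamma$, which I take as routine.

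For part (2), restriction to free variables, the induction hypothesis gives for each subterm a derivation in the context consisting only of the variables actually occurring free. For $\to_i$ one keeps the bound variable only when it is genuinely used, which matches the definition $FV(\lambda x.M) = FV(M) \setminus \{x\}$. For the $\text{let}_{\bigcirc}$ case one uses the inductive hypothesis on each premise $\Gamma \vdash M_i : \bigcirc \varphi_i$ to cut $\Gamma$ down to $\bigcup_i FV(M_i)$; the second premise $\overrightarrow{x} : \overrightarrow{A} \vdash N : \psi$ is local to the let and need not be altered, which is precisely what Definition~7(7) reflects.

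For part (3), the substitution lemma, the only delicate case is once again $\text{let}_{\bigcirc}$. Here the definition of substitution gives $({\bf let\:}\bigcirc \overrightarrow{x} = \overrightarrow{M} {\:\bf in\:} N)[y := P] = {\bf let\:}\bigcirc \overrightarrow{x} = \overrightarrow{M}[y := P] {\:\bf in\:} N$, so I only have to push the substitution into the premises $\Gamma, y : \varphi \vdash M_i : \bigcirc \varphi_i$. Applying the inductive hypothesis to each of these together with $\Gamma \vdash N : \varphi$ yields $\Gamma \vdash M_i[y := P] : \bigcirc \varphi_i$, and the second premise $\overrightarrow{x} : \overrightarrow{A} \vdash N : \psi$ survives unchanged because $y$ is not among the $\overrightarrow{x}$. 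The remaining cases are the usual textbook ones, with $\alpha$-renaming to maintain Barendregt's convention whenever a substitution crosses a binder. The main obstacle is therefore purely bookkeeping in the $\text{let}_{\bigcirc}$ case, namely ensuring that the separation between the ``outer'' context $\Gamma$ and the ``inner'' context $\overrightarrow{x} : \overrightarrow{A}$ of the let-rule is preserved throughout.
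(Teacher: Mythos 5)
Your proof is correct and takes essentially the same route as the paper, which disposes of items 1--2 by induction on the derivation $\Gamma \vdash M : \varphi$ (invoking the generation lemma for $\mathbf{pure}$) and item 3 by induction on the derivation as well; your treatment of the $\text{let}_{\bigcirc}$ case just fills in the details the paper leaves implicit. The only blemish is a notational slip in part (3), where you write ``together with $\Gamma \vdash N : \varphi$'' but mean the substituted term $P$ (the clash comes from the statement reusing the letter $N$).
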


\begin{proof}
  $ $

The items 1-2 are proved by induction on $\Gamma \vdash M : \varphi$ using \ref{Gen}. The third item is shown by induction on the derivation of $\Gamma \vdash N : \psi$.

\end{proof}

\begin{theorem} Subject reduction.

  If $\Gamma \vdash M : \varphi$ and $M \twoheadrightarrow_r N$, then $\Gamma \vdash N : \varphi$.

\end{theorem}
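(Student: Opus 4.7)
The plan is the standard one: first reduce to the one-step case by an outer induction on the length of the multistep reduction $M \twoheadrightarrow_r N$, so it suffices to show that $\Gamma \vdash M : \varphi$ together with $M \rightarrow_r N$ yields $\Gamma \vdash N : \varphi$. For this I would proceed by case analysis on which of the six $\beta$-rules of Definition 8 was applied at the top level, combined with a straightforward induction on the term structure to handle reductions occurring inside a subterm (the congruence cases, which are routine and use the induction hypothesis together with reapplication of the corresponding typing rule).

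The key technical inputs are the generation lemma for $\bigcirc_I$ (Proposition \ref{Gen}) and the substitution lemma (clause 3 of Lemma \ref{Struc}). For the $\lambda$-case, inverting the derivation of $\Gamma \vdash (\lambda x. M) N : \varphi$ gives $\Gamma, x : \sigma \vdash M : \varphi$ and $\Gamma \vdash N : \sigma$, so $\Gamma \vdash M[x := N] : \varphi$ by substitution; the two projection cases are analogous after inverting $\times_i$. For the empty \textbf{let} rule (rule~6), if $\Gamma \vdash {\bf let \:} \bigcirc \underline{\quad} = \underline{\quad} {\: \bf in \:} M : \bigcirc \psi$, then by inversion on $\text{let}_\bigcirc$ we have $\vdash M : \psi$; weakening (clause 1 of Lemma \ref{Struc}) and $\bigcirc_I$ give $\Gamma \vdash {\bf pure \:} M : \bigcirc \psi$.

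The genuinely interesting cases are rules 4 and 5. For rule~5, inverting $\text{let}_\bigcirc$ on $\Gamma \vdash {\bf let \:} \bigcirc \overrightarrow{x} = {\bf pure \:} \overrightarrow{M} {\: \bf in \:} N : \bigcirc \psi$ produces $\Gamma \vdash {\bf pure \:} M_i : \bigcirc \varphi_i$ for each $i$ and $\overrightarrow{x} : \overrightarrow{\varphi} \vdash N : \psi$. Proposition \ref{Gen} extracts $\Gamma \vdash M_i : \varphi_i$; iterating the substitution lemma yields $\Gamma \vdash N[\overrightarrow{x} := \overrightarrow{M}] : \psi$, after which $\bigcirc_I$ delivers the required type $\bigcirc \psi$ for ${\bf pure \:} N[\overrightarrow{x} := \overrightarrow{M}]$. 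For rule~4, inversion on the outer $\text{let}_\bigcirc$ gives the types of the $M_i$'s, of the nested \textbf{let} (which forces $Q$ to have type $\theta$ in context $\overrightarrow{w}:\overrightarrow{\alpha}$, where $\theta$ is the type stripped from $\bigcirc\theta$ for the binder $y$), and of $R$ in context $\overrightarrow{x}:\overrightarrow{\varphi},\, y:\theta,\, \overrightarrow{z}:\overrightarrow{\varsigma}$; weakening $Q$ into the combined context and applying substitution for $y$ produce a typing of $R[y := Q]$ in the merged context $\overrightarrow{x}:\overrightarrow{\varphi},\, \overrightarrow{w}:\overrightarrow{\alpha},\, \overrightarrow{z}:\overrightarrow{\varsigma}$, from which the right-hand $\text{let}_\bigcirc$ reconstructs the type $\bigcirc \psi$.

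The main obstacle is bookkeeping in rule~4: one must align the concatenated sequences of binders, check that the variable conventions prevent capture when pulling $Q$ out of the inner \textbf{let}, and use weakening to move the various typings into a common context before substitution. A minor preliminary step, implicit in the analysis above, is to record generation lemmas for the non-modal constructors (application, pairing, projection, $\text{let}_\bigcirc$); each is proved by straightforward inspection of the last rule of a derivation, exactly as in Proposition \ref{Gen}.
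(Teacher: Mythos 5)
Your proposal is correct and follows essentially the same route as the paper's (very terse) proof: induction on the derivation and on the generation of $\rightarrow_\beta$, reducing to the one-step case by transitivity, with Proposition~\ref{Gen} and Lemma~\ref{Struc} supplying the inversion and substitution/weakening steps. Your expansion of the cases for reduction rules 4--6, including the bookkeeping for the nested \textbf{let}, fills in exactly the details the paper leaves implicit.
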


\begin{proof}

By induction on the derivation $\Gamma \vdash M : \varphi$ and on the generation of $\rightarrow_{\beta}$. The general statement follows from transitivity of $\twoheadrightarrow_{\beta}$, Proposition~\ref{Gen}, and Lemma~\ref{Struc}.
\end{proof}

Now we discuss the relation between the monadic metalanguage and $\lambda_{{\bf IEL}^{-}}$. The monadic metalanguage is the modal lambda-calculus based on the categorical semantics of computation proposed by Moggi \cite{moggi1991notions}. As we mentioned above, the monadic metalanguage might be considered as the type-theoretical representation of computation with an abstract data type of action. In fact, the monadic metalanguage is a type-theoretical formulation of Haskell monads. We show that $\lambda_{{\bf IEL}^{-}}$ is sound with respect to the monadic metalanguage.

\begin{defin} The monadic metalanguage
  $ $

The monadic metalanguage extends the simply-typed lambda calculus with the additional typing rules:

  \begin{minipage}{0.45\textwidth}
    \begin{prooftree}
      \AxiomC{$\Gamma \vdash M : \varphi$}
      \RightLabel{$\nabla_I$}
      \UnaryInfC{$\Gamma \vdash {\bf val \: } M : \nabla \varphi$}
    \end{prooftree}
  \end{minipage}%
  \hfill
  \begin{minipage}{0.45\textwidth}
  \begin{tabular}{p{\textwidth}}
  \begin{prooftree}
    \AxiomC{$\Gamma \vdash M : \nabla \varphi$}
    \AxiomC{$\Gamma, x : \varphi \vdash N : \nabla \psi$}
    \RightLabel{$\text{let}_{\nabla}$}
    \BinaryInfC{$\Gamma \vdash {\bf let \: val \:} x = M {\: \bf in \: } N : \nabla \psi$}
  \end{prooftree}
  \end{tabular}
  \end{minipage}
\end{defin}

The reduction rules are the following ones (in addition to the standard rule for abstraction and application):

\begin{enumerate}
  \item ${\bf let \: val \:} x = {\bf val \: } M {\bf \: in \:} N \rightarrow_{\beta} N [x := M]$
  \item ${\bf let \: val \:} x = ({\bf let \: val \:} y = N {\bf \: in \:} P) {\bf \: in \:} M \rightarrow_{\beta}
  {\bf let \: val \:} y = N {\bf \: in \:} ({\bf let \: val \:} x = P {\bf \: in \:} M)$
  \item ${\bf let \: val \:} x = M {\bf \: in \:} x \rightarrow_{\eta} M$
\end{enumerate}

Let us define the translation $\ulcorner . \urcorner$ from $\lambda_{{\bf IEL}^{-}}$ to the monadic metalanguage:

\begin{enumerate}
  \item $\ulcorner p_i \urcorner = p_i$, where $p_i$ is atomic
  \item $\ulcorner \varphi \to \psi \urcorner = \ulcorner \varphi \urcorner \to \ulcorner \psi \urcorner$
  \item $\ulcorner \bigcirc \varphi \urcorner = \nabla \ulcorner \varphi \urcorner$
\end{enumerate}

\begin{enumerate}
  \item $\ulcorner x \urcorner = x$, $x$ is a variable
  \item $\ulcorner \lambda x. M \urcorner = \lambda x. \ulcorner M \urcorner$
  \item $\ulcorner M \: N \urcorner = \ulcorner M \urcorner \ulcorner N \urcorner$
  \item $\ulcorner {\bf pure \:} M \urcorner = {\bf val \:} \ulcorner M \urcorner$
  \item $\ulcorner {\bf let \:} \bigcirc \overrightarrow{x} = \overrightarrow{M} {\bf \: in \:} N \urcorner = {\bf let \: val \:} \overrightarrow{x} = \ulcorner \overrightarrow{M}
  \urcorner {\bf \: in \:} {\bf val} \ulcorner N \urcorner$
\end{enumerate}
where ${\bf let \: val \:} \overrightarrow{x} = \ulcorner \overrightarrow{M} \urcorner {\bf \: in \:} N$ denotes ${\bf let \: val \:} x_1 = \ulcorner M_1 \urcorner {\bf \: in \:} ( \dots {\bf \: in \:} ({\bf let \: val \:} x_n = \ulcorner M_n \urcorner {\bf \: in \:} {\bf val \:} N) \dots)$

If $\Gamma = \{ x_1 : \varphi_1, \dots, x_n : \varphi_n \}$ is a context, then $\ulcorner \Gamma \urcorner = \{ x_1 : \ulcorner \varphi_1 \urcorner, \dots, x_n : \ulcorner \varphi_n \urcorner \}$. Let us denote $\vdash_{\lambda_{{\bf IEL}^{-}}}$ as the derivability relation in $\lambda_{{\bf IEL}^{-}}$ in order to distinguish the $\lambda_{{\bf IEL}^{-}}$ derivability from the monadic metalanguage one.

\begin{lemma}
  $ $

  If $\Gamma \vdash_{\lambda_{{\bf IEL}^{-}}} M : A$, then $\ulcorner \Gamma \urcorner \vdash \ulcorner M \urcorner : \ulcorner A \urcorner$ in the monadic metalanguage.
\end{lemma}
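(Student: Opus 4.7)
The plan is to proceed by induction on the derivation of $\Gamma \vdash_{\lambda_{{\bf IEL}^{-}}} M : A$. The axiom, $\to_i$, $\to_e$, $\times_i$ and $\times_e$ cases are immediate because the translation is strictly homomorphic on these constructors and the monadic metalanguage inherits all the simply-typed $\lambda$-calculus rules, so each step of the source derivation matches exactly one step of the target derivation after translation of the context and the type.

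For the $\bigcirc_I$ rule, suppose $\Gamma \vdash M : \varphi$ yields $\Gamma \vdash {\bf pure \:} M : \bigcirc \varphi$. By the induction hypothesis $\ulcorner \Gamma \urcorner \vdash \ulcorner M \urcorner : \ulcorner \varphi \urcorner$, and the $\nabla_I$ rule of the monadic metalanguage gives $\ulcorner \Gamma \urcorner \vdash {\bf val \:} \ulcorner M \urcorner : \nabla \ulcorner \varphi \urcorner$, which is precisely $\ulcorner \Gamma \urcorner \vdash \ulcorner {\bf pure \:} M \urcorner : \ulcorner \bigcirc \varphi \urcorner$.

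The main work is the $\text{let}_{\bigcirc}$ case. Suppose the last rule is applied to premises $\Gamma \vdash M_i : \bigcirc \varphi_i$ for $i = 1, \dots, n$ and $x_1 : \varphi_1, \dots, x_n : \varphi_n \vdash N : \psi$, giving $\Gamma \vdash {\bf let \:} \bigcirc \overrightarrow{x} = \overrightarrow{M} \:{\bf in}\: N : \bigcirc \psi$. By induction, $\ulcorner \Gamma \urcorner \vdash \ulcorner M_i \urcorner : \nabla \ulcorner \varphi_i \urcorner$ for each $i$, and $x_1 : \ulcorner \varphi_1 \urcorner, \dots, x_n : \ulcorner \varphi_n \urcorner \vdash \ulcorner N \urcorner : \ulcorner \psi \urcorner$. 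Applying $\nabla_I$ to the latter yields $\overrightarrow{x} : \ulcorner \overrightarrow{\varphi} \urcorner \vdash {\bf val \:} \ulcorner N \urcorner : \nabla \ulcorner \psi \urcorner$. I now build the nested $\text{let}_{\nabla}$ chain from the inside out: at stage $k$ (counting from $n$ down to $1$), I have a judgment of the form $\ulcorner \Gamma \urcorner, x_1 : \ulcorner \varphi_1 \urcorner, \dots, x_k : \ulcorner \varphi_k \urcorner \vdash T_k : \nabla \ulcorner \psi \urcorner$, where $T_k$ is the corresponding tail of the translated term; combining this with the weakened judgment $\ulcorner \Gamma \urcorner, x_1 : \ulcorner \varphi_1 \urcorner, \dots, x_{k-1} : \ulcorner \varphi_{k-1} \urcorner \vdash \ulcorner M_k \urcorner : \nabla \ulcorner \varphi_k \urcorner$ (obtained from the inductive hypothesis by Lemma~\ref{Struc}(1) applied in the monadic metalanguage, whose analogous weakening is proved in the same way) via $\text{let}_{\nabla}$ gives $T_{k-1}$. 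Iterating down to $k = 0$ produces exactly $\ulcorner \Gamma \urcorner \vdash \ulcorner {\bf let \:} \bigcirc \overrightarrow{x} = \overrightarrow{M} \:{\bf in}\: N \urcorner : \nabla \ulcorner \psi \urcorner$, as required.

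The only subtle point, and where care must be taken, is precisely this unfolding of the simultaneous binding into a sequence of single bindings: one must check that all variable scoping conditions are met (the $x_i$ remain free in the inner term and are not captured by the outer context), and that the weakening of each $\ulcorner \Gamma \urcorner \vdash \ulcorner M_k \urcorner : \nabla \ulcorner \varphi_k \urcorner$ with the freshly introduced variables is legitimate. Both are standard given Lemma~\ref{Struc}, so the argument goes through without essential difficulty.
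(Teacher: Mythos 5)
Your proposal is correct and follows essentially the same route as the paper: induction on the typing derivation, with the $\bigcirc_I$ case handled by $\nabla_I$ and the $\text{let}_{\bigcirc}$ case by applying $\nabla_I$ to the premise for $N$ and then the $\text{let}_{\nabla}$ rule. The only difference is one of detail: the paper compresses the iterated single-variable $\text{let}_{\nabla}$ chain (and the attendant weakenings) into one two-premise inference figure, whereas you unfold it explicitly, which is a faithful elaboration rather than a different argument.
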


\begin{proof} By induction on $\Gamma \vdash_{\lambda_{{\bf IEL}^{-}}} M : A$. One may prove the cases of $\Box_I$ and $\text{let}_{\Box}$ as follows:

  \begin{prooftree}
    \AxiomC{$\ulcorner \Gamma \urcorner \vdash \ulcorner M \urcorner : \ulcorner A \urcorner$}
    \UnaryInfC{$\ulcorner \Gamma \urcorner \vdash {\bf val \:} \ulcorner M \urcorner : \nabla \ulcorner A \urcorner$}
  \end{prooftree}

  \begin{prooftree}
    \AxiomC{$\ulcorner \Gamma \urcorner \vdash \ulcorner \overrightarrow{M} \urcorner : \nabla \ulcorner \overrightarrow{A} \urcorner$}
    \AxiomC{$\overrightarrow{x} : \ulcorner \overrightarrow{A} \urcorner \vdash \ulcorner N \urcorner : \ulcorner B \urcorner$}
    \UnaryInfC{$\overrightarrow{x} : \ulcorner \overrightarrow{A} \urcorner \vdash {\bf val \:} \ulcorner N \urcorner : \nabla \ulcorner B \urcorner$}
    \BinaryInfC{$\ulcorner \Gamma \urcorner \vdash {\bf let \: val \:} \overrightarrow{x} = \ulcorner \overrightarrow{M} \urcorner {\: \bf in \:} {\bf val \:} \ulcorner N \urcorner : \nabla \ulcorner B \urcorner$}
  \end{prooftree}
\end{proof}

Now one may formulate the following lemma:

\begin{lemma}
  $ $

  \begin{enumerate}
    \item $\ulcorner M [x := N] \urcorner = \ulcorner M \urcorner [x := \ulcorner N \urcorner]$
    \item $M \twoheadrightarrow_{r} N \Rightarrow \ulcorner M \urcorner \twoheadrightarrow_{\beta} \ulcorner N \urcorner$
  \end{enumerate}
\end{lemma}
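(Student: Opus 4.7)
The plan is to prove both items by structural induction, with item (2) invoking item (1) inside the $\beta$-for-$\lambda$ and the $\text{let}$-of-$\text{\bf pure}$ cases. I would first prove (1) by induction on $M$. The atomic, application, abstraction (with the standard Barendregt convention choosing $x \notin \{y\} \cup FV(N)$), pair, projection, and $\textbf{pure}$ cases unfold directly from the definitions of substitution and translation and close under the inductive hypothesis. The single delicate case is the $\textbf{let}$-term: by the substitution clause (7) in Definition~8, the outer $N$ is left untouched by $[y := P]$ (the free variables of $N$ are bound by $\overrightarrow{x}$), and under translation $\ulcorner N \urcorner$ appears only inside the innermost $\textbf{val}$, wrapped in a nested chain $\textbf{let val}\, x_i = \ulcorner M_i \urcorner$; the substitution $[y := \ulcorner P \urcorner]$ therefore acts only on each $\ulcorner M_i \urcorner$, and by the inductive hypothesis these equal $\ulcorner M_i[y := P] \urcorner$, matching the translation of the right-hand side.

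For item (2), I would induct on the generation of $\twoheadrightarrow_r$, using reflexivity, transitivity, and congruence (the analogous closure properties of $\twoheadrightarrow_\beta$ in the monadic metalanguage), and then case-split on the six basic $\beta$-rules. Rule~1 translates to $(\lambda x.\ulcorner M\urcorner)\ulcorner N\urcorner \rightarrow_\beta \ulcorner M\urcorner[x := \ulcorner N\urcorner]$, which equals $\ulcorner M[x := N]\urcorner$ by item~(1). The product rules 2--3 are routine under the obvious extension of the translation to pairs. Rule~6 is trivial since the translation of the empty $\textbf{let}$ is literally $\textbf{val}\,\ulcorner M\urcorner = \ulcorner \textbf{pure}\,M\urcorner$. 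Rule~5 translates to a chain $\textbf{let val}\,x_1 = \textbf{val}\,\ulcorner M_1\urcorner\,\textbf{in}\,\dots\,\textbf{in}\,\textbf{val}\,\ulcorner N\urcorner$; iterating rule~1 of the monadic metalanguage ($n$ times) unwinds this to $\textbf{val}(\ulcorner N\urcorner[\overrightarrow{x} := \ulcorner\overrightarrow{M}\urcorner])$, which by item~(1) equals $\ulcorner \textbf{pure}\,N[\overrightarrow{x} := \overrightarrow{M}]\urcorner$.

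The main obstacle is rule~4, the flattening of a nested $\textbf{let}$ at position $y$. After translation, the left-hand side becomes a chain $\textbf{let val}\,x_i = \ulcorner M_i\urcorner$, then $\textbf{let val}\,y = (\text{translation of the inner }\textbf{let})$, then $\textbf{let val}\,z_j = \ulcorner P_j\urcorner$, ending in $\textbf{val}\,\ulcorner R\urcorner$; the inner translation itself produces $\textbf{let val}\,w_k = \ulcorner N_k\urcorner\,\textbf{in}\,\textbf{val}\,\ulcorner Q\urcorner$. The target form is obtained by repeatedly applying the commuting-conversion rule~2 of the monadic metalanguage to float each $\textbf{let val}\,w_k = \ulcorner N_k\urcorner$ outward past the binder for $y$, arriving at a configuration where the $y$-binder is of the shape $\textbf{let val}\,y = \textbf{val}\,\ulcorner Q\urcorner\,\textbf{in}\,\cdots$; rule~1 then contracts this to a substitution $[\,y := \ulcorner Q\urcorner\,]$ through the remainder, which by item~(1) matches $\ulcorner R[y := Q]\urcorner$. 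The bookkeeping required here — tracking the order of the $\textbf{let val}$s, invoking $\alpha$-conversion so that the $w_k$ do not clash with $\overrightarrow{z}$ or with free variables of $\overrightarrow{P}$, and counting the number of commutations — is what makes this case tedious rather than hard in principle.

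Finally, I would close item (2) under the compatibility of $\twoheadrightarrow_\beta$ with the monadic-metalanguage term constructors; since the translation is defined compositionally, a reduction step inside a subterm is transported into a reduction step inside the translated subterm, yielding the full multi-step statement.
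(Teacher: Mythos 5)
Your proposal is correct and follows essentially the same route as the paper: item (1) by structural induction, item (2) by induction on the generation of the reduction with a case split on the $\beta$-rules, using the commuting-conversion rule of the monadic metalanguage to flatten the nested $\textbf{let}$ and invoking item (1) in the substitution cases. You are in fact somewhat more thorough, since the paper treats only the modal cases and restricts the nested-$\textbf{let}$ case to a single outer binder ``for simplicity,'' whereas you track the general bookkeeping explicitly.
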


\begin{proof}
$ $

\begin{enumerate}
  \item Induction on the structure of $M$.
  \item By the induction on $\to_r$:
  \begin{enumerate}
  \item For simplicity, we consider the case with only one variable in ${\bf let \:} \bigcirc$ local binding, that can be easily extended to an arbitrary number of variables in local binding:

    $\begin{array}{lll}
    &\ulcorner {\bf let \:} \bigcirc x = ({\bf let \:} \bigcirc \overrightarrow{y} = \overrightarrow{N} {\: \bf in \:} P) {\: \bf in \:} M \urcorner = & \\
    &\quad {\bf let \: val \:} x = ({\bf let \: val \:} \overrightarrow{y} = \ulcorner \overrightarrow{N} \urcorner {\: \bf in \:} {\bf val \:} \ulcorner P \urcorner) {\: \bf in \:} {\bf val \:} \ulcorner M \urcorner \rightarrow_{\beta} & \\
    &\quad {\bf let \: val \:} \overrightarrow{y} = \ulcorner \overrightarrow{N} \urcorner {\: \bf in \:} ({\bf let \: val \:} x = \ulcorner P \urcorner {\: \bf in \:} {\bf val \:} \ulcorner M \urcorner) \rightarrow_{\beta} & \\
    &\quad {\bf let \: val \:} \overrightarrow{y} = \ulcorner \overrightarrow{N} \urcorner {\: \bf in \:} {\bf val \:} \ulcorner M \urcorner [x := \ulcorner P \urcorner] = \ulcorner {\bf let \:} \bigcirc \overrightarrow{y} = \overrightarrow{N} {\: \bf in \:} M [x := P] \urcorner&
    \end{array}$

    \item $ $

    $\begin{array}{lll}
    &\ulcorner {\bf let \:} \bigcirc \overrightarrow{x} = {\bf pure \:} \overrightarrow{N} {\: \bf in \:} M \urcorner =  {\bf let \: val \:} \overrightarrow{x} = {\bf val \:} \ulcorner \overrightarrow{N} \urcorner {\: \bf in \:} {\bf val} \ulcorner M \urcorner \rightarrow_{\beta} & \\
    &\quad {\bf val \:} \ulcorner M \urcorner [\overrightarrow{x} := \ulcorner \overrightarrow{N} \urcorner] = \ulcorner {\bf pure \:} M [\overrightarrow{x} := \overrightarrow{N}] \urcorner&
    \end{array}$

    \item $\ulcorner {\bf let \:} \bigcirc x = M {\: \bf in \:} x \urcorner = {\bf let \: val \:} x = \ulcorner M \urcorner {\: \bf in \:} {\bf val \:} x \rightarrow_{\eta} \ulcorner M \urcorner$
  \end{enumerate}
\end{enumerate}
\end{proof}

\begin{theorem} \label{MonadSound}
  $ $

${\bf IEL}^{-}$ is sound with respect to the monadic metalanguage.

\end{theorem}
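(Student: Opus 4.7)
The plan is to assemble this theorem as a direct corollary of the two preceding lemmas, which together with the Curry--Howard correspondence make the argument almost mechanical. By \emph{soundness} I read the statement as: whenever ${\bf IEL}^{-}_{\to, \land, \bigcirc} \vdash A$, the translated formula $\ulcorner A \urcorner$ is inhabited in the monadic metalanguage, and moreover the translation carries convertibility to convertibility. These are the two halves to check.

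For the provability half, given ${\bf IEL}^{-}_{\to, \land, \bigcirc} \vdash A$, Lemma~2 supplies ${\bf NIEL}^{-} \vdash A$. Since the typing rules of $\lambda_{{\bf IEL}^{-}}$ are in one-to-one correspondence with the inference rules of ${\bf NIEL}^{-}$ via proof-term assignment (exactly as illustrated by the two displayed derivations of $\varphi \to \bigcirc \varphi$ and $\bigcirc(\varphi \to \psi) \to \bigcirc \varphi \to \bigcirc \psi$), I can extract a closed term $M$ with $\vdash_{\lambda_{{\bf IEL}^{-}}} M : A$. The first of the two preceding lemmas then yields $\vdash \ulcorner M \urcorner : \ulcorner A \urcorner$ in the monadic metalanguage, which is exactly the provability half.

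For the equational half I would appeal to the second preceding lemma: any reduction $M \twoheadrightarrow_r N$ in $\lambda_{{\bf IEL}^{-}}$ is transported under $\ulcorner \cdot \urcorner$ to a reduction $\ulcorner M \urcorner \twoheadrightarrow_\beta \ulcorner N \urcorner$ in the monadic metalanguage, so the interpretation really is an interpretation of the equational theory and not merely a provability-preserving map. Combining the two halves yields the theorem.

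I do not anticipate any genuine obstacle, since all of the substantive combinatorics has already been discharged in the two preceding lemmas. The only point requiring a small check is that the ${\bf let \,} \bigcirc \overrightarrow{x} = \overrightarrow{M} \, {\bf in} \, N$ construct, which binds a sequence of variables simultaneously, unfolds to a nested chain of ${\bf let \, val}$-bindings under $\ulcorner \cdot \urcorner$; this is precisely why the displayed case analysis in the proof of the second preceding lemma treats reduction rules~4 and~5 separately. Once that bookkeeping is in hand, the theorem is a one-line assembly of the results above, and no further ideas are needed.
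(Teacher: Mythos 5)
Your proposal is correct and matches the paper's own argument, which simply states that the theorem ``follows from the lemmas above'': the two preceding lemmas (translation preserves typing judgements, and translation preserves substitution and reduction) are exactly the two halves you assemble. Your extra detour through Lemma~2 and proof-term assignment to pass from ${\bf IEL}^{-}$-provability to term inhabitation is a harmless unpacking of the same route, not a different one.
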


\begin{proof}
  Follows from the lemmas above.
\end{proof}

\begin{theorem}
  $ $

  $\twoheadrightarrow_{\beta}$ is strongly normalising.
\end{theorem}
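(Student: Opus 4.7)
The natural plan is to leverage the translation $\ulcorner \cdot \urcorner$ from $\lambda_{{\bf IEL}^{-}}$ to Moggi's monadic metalanguage established in the previous lemma, reducing the problem to strong normalisation of the metalanguage. The metalanguage is itself SN via an easy translation into simply typed lambda calculus: interpret $\nabla A$ as $A$, ${\bf val \:} M$ as $M$, and ${\bf let \: val \:} x = M \: {\bf in \:} N$ as $(\lambda x. N) M$. This translation strictly preserves the $\beta$-rules of the metalanguage, so SN of the metalanguage follows from the classical SN theorem for STLC.

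The task then becomes upgrading the preceding lemma from $M \twoheadrightarrow_r N \Rightarrow \ulcorner M \urcorner \twoheadrightarrow_\beta \ulcorner N \urcorner$ to the strict statement that each $\to_r$-step induces \emph{at least one} $\to_\beta$-step on the translation. This is immediate for the simply typed rules, and the derivations in the proof of the preceding lemma already show that each application of the associativity rule (4) and the pure-let rule (5) yields at least one $\to_\beta$-step. The only delicate case is rule (6): the empty let ${\bf let \:} \bigcirc \underline{\quad} = \underline{\quad} \: {\bf in \:} M \to_\beta {\bf pure \:} M$ translates to an identity under $\ulcorner \cdot \urcorner$.

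I would handle (6) separately by observing that a rule-(6) contraction strictly decreases a simple structural measure on $\lambda_{{\bf IEL}^{-}}$-terms, e.g.\ the total number of ${\bf let}$-binders, whereas the other rules do not introduce new ${\bf let}$-binders. A lexicographic combination of this measure with the $\to_\beta$-reduction length of the translation then rules out any infinite $\to_r$-sequence: such a sequence must project to either an infinite $\to_\beta$-sequence in the metalanguage (excluded by SN of the metalanguage) or an infinite tail consisting only of rule-(6) contractions (excluded by well-foundedness of the structural measure). The main obstacle is verifying the strict decrease for the associativity rule (4) in full generality, since the flattening of nested lets by the translation produces a rearrangement of ${\bf let \: val}$-binders whose correspondence with a single reduction step in the source requires a careful case analysis; once this is in place, Theorem~\ref{MonadSound} and SN of STLC conclude the argument. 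Alternatively, one may sidestep the translation entirely via a direct Tait--Girard reducibility argument, defining $R_{\bigcirc A}$ as the strongly normalising terms of type $\bigcirc A$ whose ${\bf pure}$-headed reducts have $R_A$-reducible bodies, the main technical step then being the substitution lemma for the ${\bf let}_{\bigcirc}$-rule.
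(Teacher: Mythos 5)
Your proposal follows the same route as the paper: strong normalisation of $\lambda_{{\bf IEL}^{-}}$ is reduced, via the translation $\ulcorner\cdot\urcorner$, to strong normalisation of Moggi's monadic metalanguage. The paper's own proof is a one-liner invoking the simulation lemma ($M \twoheadrightarrow_r N$ implies $\ulcorner M \urcorner \twoheadrightarrow_{\beta} \ulcorner N \urcorner$) together with SN of the metalanguage, and your version is more careful at exactly the point where that one-liner is too quick: a simulation that only yields $\twoheadrightarrow_{\beta}$ (possibly zero steps) cannot by itself exclude infinite source reductions, and rule (6) does indeed translate to the identity, since the translation of an empty ${\bf let}\:\bigcirc$ is already ${\bf val}\:\ulcorner M \urcorner = \ulcorner {\bf pure}\: M \urcorner$. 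Your lexicographic measure (maximal metalanguage reduction length first, number of ${\bf let}$-binders second) is a correct repair; note only that the side remark that the other rules ``do not introduce new ${\bf let}$-binders'' is false for rule (1), which can duplicate them under substitution, but this is harmless because those rules strictly decrease the first component. The one genuine flaw is your claimed easy proof of SN for the metalanguage itself: interpreting ${\bf let \: val\:} x = M \:{\bf in}\: N$ as $(\lambda x. N)\, M$ does not send the commuting conversion (rule 2 of the metalanguage) to a $\beta$-step of the simply typed calculus --- $(\lambda x. M)((\lambda y. P)\,N)$ and $(\lambda y. (\lambda x. M)\,P)\,N$ are not related by $\beta$-reduction --- and SN of the computational metalanguage is in fact a nontrivial result; the paper simply cites the literature for it, and you should too. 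With that citation substituted for your sketch, your argument is a strictly more rigorous version of the paper's.
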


\begin{proof}
  Follows from Theorem~\ref{MonadSound} above, so far as reduction in the monadic metalanguage is strongly normalising \cite{Paiva2} and $\lambda_{{\bf IEL}^{-}}$ is sound with respect to the monadic metalanguage.
\end{proof}

\begin{theorem}
  $ $

  $\twoheadrightarrow_r$ is confluent.
\end{theorem}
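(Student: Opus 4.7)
The plan is to invoke Newman's lemma: since $\twoheadrightarrow_{\beta}$ has just been shown to be strongly normalising, it suffices to establish local (weak) confluence, i.e.\ that whenever $M \rightarrow_r N_1$ and $M \rightarrow_r N_2$ there exists $P$ with $N_1 \twoheadrightarrow_r P$ and $N_2 \twoheadrightarrow_r P$. Local confluence is in general far easier than full confluence because the two one-step reducts can only interact in a bounded way, and the critical-pair analysis is finite.

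The proof proper will proceed by case analysis on the pair of redex positions in $M$. If the two redexes are disjoint, reducing one does not affect the other and $N_1, N_2$ meet in one further step each. If one redex is strictly nested inside a proper subterm of the other that is preserved (or duplicated) by the outer reduction, the commuting square is closed by reducing the nested redex in every residual copy; here the only delicate point is rule~(5), whose contractum ${\bf pure \:} N[\overrightarrow{x} := \overrightarrow{M}]$ may contain several copies of a residual redex coming from $\overrightarrow{M}$, so the closing diagram needs a multistep reduction on one side, which is exactly what Newman's lemma allows.

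The bulk of the work is the genuine critical pairs arising at the root of a ${\bf let \:} \bigcirc$-term. Rule~(4) overlaps with itself when two different inner lets appear among the bound terms, with rule~(5) when one of the inner bound terms is ${\bf pure \:} \overrightarrow{N}$, and with rule~(6) when an inner let has an empty binding sequence; symmetrically, rule~(5) overlaps with rule~(6) when $\overrightarrow{M}$ and $\overrightarrow{x}$ are empty. In each such overlap I would explicitly write the two one-step reducts and verify that performing the remaining reduction on each of them yields the same normal-form-shaped term, using the first clause of the previous lemma (compositionality of substitution) to identify terms of the form $R[y := Q][\overrightarrow{x} := \overrightarrow{M}]$ with $R[\overrightarrow{x} := \overrightarrow{M}][y := Q[\overrightarrow{x} := \overrightarrow{M}]]$. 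The ordinary simply-typed critical pairs ($\beta$ with projections, $\beta$ inside itself, projection inside pairing) are classical and reduce to the standard Tait--Martin-L\"of calculation.

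The main obstacle is the bookkeeping for rule~(4): because the rule simultaneously flattens a nested let and performs the substitution $R[y := Q]$, two overlapping instances produce reducts whose bound-variable sequences are permuted and whose substitutions are interleaved. I would handle this by choosing a canonical ordering of the merged sequence $\overrightarrow{x}, \overrightarrow{w}, \overrightarrow{z}$ up to $\alpha$-conversion (the binding is simultaneous, so the order does not matter semantically) and appealing to the substitution lemma from Lemma~\ref{Struc} to show that the two orders of merging give $\alpha$-equal terms, which is sufficient to close the diagram. Once every critical pair is closed, Newman's lemma together with the preceding strong-normalisation theorem yields confluence of $\twoheadrightarrow_r$.
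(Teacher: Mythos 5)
Your proposal is correct and takes essentially the same route as the paper: Newman's lemma applied to the strong normalisation result just established, reducing confluence to local confluence, which is then verified by joining the critical pairs generated by the ${\bf let}\:\bigcirc$ reduction rules. The paper's own proof works out the rule (4)/(5) and rule (4)/(6) overlaps explicitly and defers the remaining pairs to the analogous analysis for the ${\bf IK}$ term calculus, so your additional bookkeeping (disjoint and nested redexes, the rule (4) self-overlap up to $\alpha$-conversion) only fills in details the paper leaves implicit.
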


\begin{proof}
  $ $

By Newman's lemma \cite{newman1942theories}, if a relation is strongly normalising and locally confluent, then this relation is confluent. It is sufficient to show that a multistep reduction is locally confluent.

\begin{lemma} If $M \rightarrow_{r} N$ and $M \rightarrow_{r} Q$, then there exists some term $P$,
such that $N \twoheadrightarrow_{r} P$ and $Q \twoheadrightarrow_{r} P$.

\end{lemma}

\begin{proof}

Let us consider the following critical pairs and show that they are joinable:

\begin{enumerate}
\item $ $ \\

\xymatrix{
{\bf let \:} \bigcirc x = ({\bf let \:} \bigcirc \overrightarrow{y} = {\bf pure \:} \overrightarrow{N} {\: \bf in \:} P) {\: \bf in \:} M \ar[d]_{\beta } \ar[dr]^{\beta} \\
{\bf let \:} \bigcirc \overrightarrow{y} = {\bf pure \:} \overrightarrow{N} {\: \bf in \:} M [x := P] & {\bf let \:} \bigcirc x = {\bf pure \:} P [\overrightarrow{y} := \overrightarrow{N}] {\: \bf in \:} M
}

\vspace{\baselineskip}

$\begin{array}{lll}
&{\bf let \:} \bigcirc \overrightarrow{y} = {\bf pure \:} \overrightarrow{N} {\: \bf in \:} M [x := P] \rightarrow_{\beta}& \\
&\quad\quad\quad\quad\quad\quad\quad {\bf pure \:} M [x := P] [\overrightarrow{y} := \overrightarrow{N}]& \\
&{\bf let \:} \bigcirc x = {\bf pure \:} P [\overrightarrow{y} := \overrightarrow{N}] {\: \bf in \:} M \rightarrow_{\beta}& \\
&\quad\quad\quad\quad\quad\quad\quad {\bf pure \:} M [x := P[\overrightarrow{y} := \overrightarrow{N}]] \equiv & \\
&\text{Since $x \notin \overrightarrow{y}$}& \\
&\quad\quad\quad\quad\quad\quad\quad {\bf pure \:} M [x := P] [\overrightarrow{y} := \overrightarrow{N}] & \\
\end{array}$

\item $ $ \\

\xymatrix{
{\bf let \:} \bigcirc x = ({\bf let \:} \bigcirc \underline{\quad} = \underline{\quad} {\: \bf in \:} N) {\: \bf in \:} M \ar[d]^{\beta} \ar[dr]^{\beta} \\
{\bf let \:} \bigcirc \underline{\quad} = \underline{\quad} {\: \bf in \:} M [x := N] & {\bf let \:} \bigcirc x = {\bf pure \:} N {\: \bf in \:} M
}

$\begin{array}{lll}
&{\bf let \:} \bigcirc \underline{\quad} = \underline{\quad} {\: \bf in \:} M [x := N] \rightarrow_{\beta } {\bf let \:} \bigcirc (M [x := N])& \\
&{\bf let \:} \bigcirc x = {\bf pure \:} N {\: \bf in \:} M \rightarrow_{\beta} {\bf pure \:} (M [x := N])&
\end{array}$
\end{enumerate}
\end{proof}

One may consider four critical pairs analysed in the confluence proof for the lambda-calculus based on the intuitionistic normal modal logic {\bf IK} \cite{ModalK1}. Those pairs are joinable in our calculus as well.

\end{proof}

\subsection{Categorical semantics}

In this subsection, we provide categorical semantics for the modal lambda calculus proposed above.  We consider the co-reflection principle coalgebraically. We recall the required notions first. See the underlying categorical definitions here \cite{goldblatt2014topoi}. We piggyback the construction used in the proof of the completeness for the simply-typed lambda-calculus, see \cite{abramsky2010introduction} to have comprehensive details.

\begin{defin} A category $\mathcal{C}$ is called cartesian closed if it has products $A \times B$, exponentials $B^A$ and the terminal object $\mathds{1}$ satisfying the universal product and exponentiation properties.
\end{defin}

Following to Bellin et. al. \cite{bellin2001extended} and Kakutani \cite{ModalK1} \cite{kakutani2016calculi}, we interpret a modal operator as a monoidal endofunctor on a cartesian closed category. A monoidal endofunctor is a functor that respects monoidal structure, products and the terminal object in our case. Here we refer to the work by Eilenberg and Kelly for precise details \cite{10.1007/978-3-642-99902-4_22}. We define a monoidal endofunctor on a cartesian closed category as an underlying notion.

\begin{defin} Let $\mathcal{C}$ be a cartesian closed category and ${\bf F} : \mathcal{C} \to \mathcal{C}$ an endofunctor, ${\bf F}$ is called monoidal if there exists a natural transformation $m$ consisting of components $m_{A, B} : {\bf F} A \times {\bf F} B \to {\bf F}(A \times B)$ and a natural transformation $u : \mathds{1} \to {\bf F} \mathds{1}$ such that the well-known diagrams commute (MacLane pentagon and triangle identity).
\end{defin}

The abstract definition of a coalgebra is the following one \footnote{A coalgebraic technique os widely used in logic and computer science as well, see \cite{venema2006algebras}.}:

\begin{defin}
  Let $\mathcal{C}$ be a category and ${\bf F} : \mathcal{C} \to \mathcal{C}$ an endofunctor. If $A \in \operatorname{Ob}(\mathcal{C})$, then an ${\bf F}$-coalgebra is a pair $\langle A, \alpha \rangle$, where $\alpha \in \operatorname{Hom}_{\mathcal{C}}(A, {\bf F} A)$.
  An ${\bf F}$-coalgebra homomorpism from $\langle A, \alpha \rangle$ to $\langle A, \beta \rangle$ is a map $f \in \operatorname{Hom}_{\mathcal{C}}(A, B)$ such that the following square commutes:

  \xymatrix{
  &&&&&& A \ar[r]^{\alpha} \ar[d]_{f} & {\bf F} A \ar[d]^{{\bf F} f}\\
  &&&&&& B \ar[r]_{\beta} & {\bf F} B
  }
\end{defin}

Given a natural transformation $\alpha : \operatorname{Id}_{\mathcal{C}} \to {\bf F}$, one may associate an ${\bf F}$-coalgebra $\langle A, \alpha_A \rangle$ for each $A \in \operatorname{Ob}(\mathcal{C})$. Homomorphisms of such coalgebras are defined by naturality.

\begin{defin}
  Let $\mathcal{C}$ be a cartesian closed category, ${\bf F} : \mathcal{C} \to \mathcal{C}$ a monoidal functor on $\mathcal{C}$, and $\alpha : \operatorname{Id}_{\mathcal{C}} \to {\bf F}$ a natural transformation. An ${\bf IEL}^{-}$-category is a pair $\langle \mathcal{C}, {\bf F}, \alpha \rangle$ such that the following coherence conditions hold:

  \begin{enumerate}
    \item $u = \alpha_{\mathds{1}}$, where $\alpha_{\mathds{1}}$
    \item $m_{A,B} \circ (\alpha_A \times \alpha_B) = \alpha_{A \times B}$, i.e. the following diagram commutes:

    \xymatrix
    {
    &&&&& A \times B \ar[rr]^{\alpha_A \times \alpha_B} \ar[drr]_{\alpha_{A \times B}} && {\bf F}A \times {\bf F}B \ar[d]^{m_{A,B}} \\
    &&&&&&& {\bf F}(A \times B)
    }
  \end{enumerate}
\end{defin}

The following construction describes the standard construction of typed lambda-calculus semantics \cite{abramsky2010introduction} \cite{lambek1988introduction}. First of all, let us define semantic brackets $[\![.]\!]$, a semantic translation from $\lambda_{\text{IEL}^{-}}$ to the ${\bf IEL}^{-}$-category $\langle \mathcal{C}, {\bf F}, \alpha \rangle$. Suppose one has an assignment $\hat{\cdot}$ that maps every primitive type to some object of $\mathcal{C}$. Such semantic brackets $[\![.]\!]$ have the following inductive definition:

\begin{enumerate}
\item $[\![p_i]\!] := \hat{p_i}$
\item $[\![\varphi \to \psi]\!] := [\![\varphi]\!]^{[\![\psi]\!]}$
\item $[\![\varphi \times \psi]\!] := [\![\varphi]\!] \times [\![\psi]\!]$
\item $[\![\bigcirc \varphi]\!] = {\bf F} [\![\varphi]\!]$
\end{enumerate}

We extend this interpretaion for contexts by induction too:
  \begin{enumerate}
    \item $[\![ \quad ]\!] = \mathds{1}$, where $\mathds{1}$ is a terminal object of a given CCC
    \item $[\![\Gamma, x : \varphi]\!] = [\![\Gamma]\!] \times [\![\varphi]\!]$
  \end{enumerate}

The typing rules are interpreted as follows. We understand typing assignments $\Gamma \vdash M : A$ as arrows of the form $[\![\Gamma \vdash M : \varphi]\!] = [\![M]\!] : [\![\Gamma]\!] \to [\![\varphi]\!]$.

\begin{minipage}{0.5\textwidth}
  \begin{flushleft}

    \begin{prooftree}
    \AxiomC{$ $}
    \UnaryInfC{$\pi_2 : [\![\Gamma]\!] \times [\![\varphi]\!] \rightarrow [\![\varphi]\!]$}
    \end{prooftree}

    \begin{prooftree}
    \AxiomC{$[\![M]\!] : [\![\Gamma]\!] \times [\![\varphi]\!] \rightarrow [\![\psi]\!]$}
    \UnaryInfC{$\Lambda([\![M]\!]) : [\![\Gamma]\!] \rightarrow[\![\psi]\!]^{[\![\varphi]\!]}$}
    \end{prooftree}

    \begin{prooftree}
    \AxiomC{$[\![M]\!] : [\![\Gamma]\!] \rightarrow [\![\varphi]\!]$}
    \AxiomC{$[\![N]\!] : [\![\Gamma]\!] \rightarrow [\![\psi]\!]$}
    \BinaryInfC{$\langle [\![M]\!], [\![N]\!] \rangle : [\![\Gamma]\!] \rightarrow [\![\varphi]\!] \times [\![\psi]\!]$}
    \end{prooftree}
  \end{flushleft}
\end{minipage}
\begin{minipage}{0.5\textwidth}
  \begin{flushright}

    \begin{prooftree}
    \AxiomC{$[\![M]\!] : [\![\Gamma]\!] \rightarrow [\![\varphi]\!]$}
    \UnaryInfC{$[\![M]\!] \circ \eta_{[\![\varphi]\!]} : [\![\Gamma]\!] \rightarrow {\bf F} [\![\varphi]\!]$}
    \end{prooftree}

    \begin{prooftree}
    \AxiomC{$[\![M]\!] : [\![\Gamma]\!] \rightarrow [\![\psi]\!]^{[\![\varphi]\!]}$}
    \AxiomC{$[\![N]\!] : [\![\Gamma]\!] \rightarrow [\![\varphi]\!]$}
    \BinaryInfC{$\langle [\![M]\!], [\![N]\!] \rangle \circ \epsilon_{[\![\varphi]\!],[\![B]\!]} : [\![\Gamma]\!] \rightarrow [\![\psi]\!] $}
    \end{prooftree}

    \begin{prooftree}
    \AxiomC{$[\![M]\!] : [\![\Gamma]\!] \rightarrow [\![\varphi_1]\!] \times [\![\varphi_2]\!]$}
    \RightLabel{$i = 1,2$}
    \UnaryInfC{$[\![M]\!] \circ \pi_i : [\![\Gamma]\!] \rightarrow [\![\varphi_i]\!]$}
    \end{prooftree}
  \end{flushright}
\end{minipage}

\begin{prooftree}
    \AxiomC{$\langle [\![M_1]\!],\dots, [\![M_n]\!] \rangle : [\![\Gamma]\!] \rightarrow \prod \limits_{i=1}^n {\bf F} [\![\varphi_i]\!]$}
    \AxiomC{$[\![N]\!] : \prod \limits_{i=1}^n [\![\varphi_i]\!] \rightarrow [\![\psi]\!]$}
    \BinaryInfC{${\bf F} ([\![N]\!]) \circ m_{[\![\varphi_1]\!], \dots, [\![\varphi_n]\!]} \circ \langle [\![M_1]\!],\dots, [\![M_n]\!] \rangle : [\![\Gamma]\!] \rightarrow {\bf F} [\![\psi]\!]$}
\end{prooftree}

The ${\bf let} \bigcirc$-rule has the intepretation similar to $\Box$-rule in term calculus for intutionistic {\bf K} \cite{bellin2001extended}. The semantic brackets respect all substitutions and reductions due to this lemma:

\begin{lemma}
$ $

\begin{enumerate}
\item $[\![M [x_1 := M_1,\dots, x_n := M_n]]\!] = [\![M]\!] \circ \langle [\![M_1]\!], \dots, [\![M_n]\!] \rangle$.
\item If $\Gamma \vdash M : A$ and $M \to_r N$, then $[\![\Gamma \vdash M : A]\!] = [\![\Gamma \vdash N : A]\!]$.
\end{enumerate}

\end{lemma}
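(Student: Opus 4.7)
The plan is to prove item (1) by induction on the structure of $M$, and then to derive item (2) by case analysis on the one-step reduction rule, systematically using (1) wherever a syntactic substitution appears on the right-hand side of a reduction.

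For (1), the base case is a variable $x_i$, whose interpretation is a projection out of the context; both sides evaluate to $[\![M_i]\!]$ by the universal property of products. Abstraction, application, pairing, and projection are routine uses of the universal properties of exponentials and products, exactly as in the completeness proof for the plain simply-typed calculus. The case ${\bf pure}\,M$ is handled by naturality of $\alpha$ at $[\![M]\!]$. The remaining case is ${\bf let}\,\bigcirc\,\overrightarrow{y}=\overrightarrow{N}\,{\bf in}\,P$: substitution only affects the terms $\overrightarrow{N}$, and the interpretation factors as ${\bf F}[\![P]\!]\circ m \circ \langle [\![N_1]\!],\dots,[\![N_k]\!]\rangle$, so applying the inductive hypothesis componentwise to each $[\![N_i[\overrightarrow{x}:=\overrightarrow{M}]]\!]$ and then tupling finishes this case.

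For (2), each $\beta$-rule is verified in isolation. The rules $(\lambda x.M)N\to_\beta M[x:=N]$ and $\pi_i\langle M_1,M_2\rangle\to_\beta M_i$ are the standard soundness arguments for CCC semantics, invoking (1) on the first to commute the substitution past $[\![\cdot]\!]$. The empty-${\bf let}$ rule ${\bf let}\,\bigcirc\,\underline{\quad}=\underline{\quad}\,{\bf in}\,M\to_\beta{\bf pure}\,M$ reduces to the first coherence axiom $u=\alpha_{\mathds{1}}$. The nested-${\bf let}$ rule is verified by using naturality of $m$ to push the inner ${\bf F}[\![Q]\!]$ through the outer monoidal multiplication, then the MacLane pentagon to reassociate the resulting flattened monoidal product, and finally (1) to identify $[\![R[y:=Q]]\!]$ with $[\![R]\!]$ composed with the appropriate tupling. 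The ${\bf pure}$-elimination rule ${\bf let}\,\bigcirc\,\overrightarrow{x}={\bf pure}\,\overrightarrow{M}\,{\bf in}\,N\to_\beta{\bf pure}\,N[\overrightarrow{x}:=\overrightarrow{M}]$ is the most delicate: the left-hand side contains a composite $m\circ(\alpha\times\cdots\times\alpha)$, which collapses to a single $\alpha$ on the product by the second coherence condition; naturality of $\alpha$ slides it past ${\bf F}[\![N]\!]$, and item (1) rewrites the result as the interpretation of the right-hand side.

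The main obstacle is the last two reduction rules: both must be lifted from the binary $m_{A,B}$ and the unit $u$ of the definition to the $n$-ary tuples used in the typing rule $\text{let}_\bigcirc$, and both exploit monoidal coherence in an essential way. For the nested-${\bf let}$ rule one relies on the pentagon to guarantee that any two bracketings of the $n$-fold monoidal product coincide, together with naturality of $m$ to commute the inner ${\bf F}$-morphism with the outer tupling. For the ${\bf pure}$-elimination rule, the coherence $m_{A,B}\circ(\alpha_A\times\alpha_B)=\alpha_{A\times B}$ is precisely what makes the collapse possible, which is why the semantics must be restricted to ${\bf IEL}^-$-categories rather than arbitrary monoidal endofunctors carrying a pointing natural transformation.
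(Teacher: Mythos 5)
Your proposal is correct and takes essentially the same route as the paper: item (1) by induction on $M$ with naturality of $\alpha$ handling the ${\bf pure}$ and ${\bf let}\,\bigcirc$ cases, and item (2) by case analysis on the reduction rules, where the ${\bf pure}$-elimination rule is discharged by the coherence condition $m_{A,B}\circ(\alpha_A\times\alpha_B)=\alpha_{A\times B}$ together with naturality of $\alpha$ and item (1), and the empty-${\bf let}$ rule by $u=\alpha_{\mathds{1}}$. The only difference is that you actually sketch the nested-${\bf let}$ case (naturality of $m$ plus monoidal coherence for the $n$-ary tuples), which the paper instead defers to Kakutani's treatment of the ${\bf IK}$ term calculus.
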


\begin{proof}
$ $

  \begin{enumerate}
    \item By simple induction on $M$. Let us check only the modal cases.

    $\begin{array}{lll}
    & [\![\Gamma \vdash ({\bf pure \:} M) [ \vec{x} := \vec{M}] : \bigcirc \varphi]\!] = [\![ \Gamma \vdash {\bf pure \:} (M [ \vec{x} := \vec{M}]) : \bigcirc \varphi]\!] = & \\
    &\eta_{[\![A]\!]} \circ [\![(M [ \vec{x} := \vec{M}])]\!] = \alpha_{[\![A]\!]} \circ ([\![M]\!] \circ \langle [\![M_1]\!], \dots, [\![M_n]\!] \rangle) = & \\
    &(\alpha_{[\![A]\!]} \circ [\![M]\!]) \circ \langle [\![M_1]\!], \dots, [\![M_n]\!] \rangle = [\![ \Gamma \vdash {\bf pure \:} M : \bigcirc \varphi]\!] \circ \langle [\![M_1]\!], \dots, [\![M_n]\!] \rangle& \\
    \end{array}$

    \vspace{\baselineskip}

    $\begin{array}{lll}
    &[\![\Gamma \vdash ({\bf let} \bigcirc \: \vec{x} = \vec{M} {\: \bf in \:} N) [\vec{y} := \vec{P}] : \bigcirc \psi]\!] = [\![\Gamma \vdash {\bf let} \bigcirc \vec{x} = (\vec{M} [\vec{y} := \vec{P}]) {\: \bf in \:} N : \bigcirc \psi]\!] =& \\
    &{\bf F} ([\![N]\!]) \circ m_{[\![\varphi_1]\!], \dots, [\![\varphi_n]\!]} \circ [\![\Gamma \vdash (\vec{M} [\vec{y} := \vec{P}]) : \bigcirc \vec{\varphi}]\!] =& \\
    &{\bf F} ([\![N]\!]) \circ m_{[\![\varphi_1]\!], \dots, [\![\varphi_n]\!]} \circ [\![\vec{M}]\!] \circ \langle [\![P_1]\!],\dots,[\![P_n]\!]\rangle = & \\
    &[\![\Gamma \vdash {\bf let} \bigcirc \vec{x} = \vec{M} {\: \bf in \:} N : \bigcirc \varphi]\!] \circ \langle [\![P_1]\!],\dots,[\![P_n]\!]\rangle&
    \end{array}$
    \item The cases with $\beta$-reductions for $let_{\bigcirc}$ are shown in \cite{ModalK1}. Those cases are similar to our ones.
    Let us consider the cases with the ${\bf pure}$ terms that immediately follow from the coherence conditions of an ${\bf IEL}^{-}$-category and the previous item of this lemma.
    \begin{enumerate}
      \item $[\![\Gamma \vdash {\bf let} \bigcirc \vec{x} = {\bf pure \:} \vec{M} {\: \bf in \:} N : \bigcirc \psi]\!] = [\![\Gamma \vdash {\bf pure \:} N [\vec{x} := \vec{M}] : \bigcirc \psi]\!]$

      $\begin{array}{lll}
      &[\![\Gamma \vdash {\bf let} \bigcirc \vec{x} = {\bf pure \:} \vec{M} {\: \bf in \:} N : \bigcirc \psi]\!] = & \\
      &{\bf F} ([\![N]\!]) \circ m_{[\![\varphi_1]\!], \dots, [\![\varphi_n]\!]} \circ \langle \alpha_{[\![\varphi_1]\!]} \circ [\![M_1]\!],\dots,\alpha_{[\![\varphi_n]\!]} \circ [\![M_n]\!] \rangle = \\
      &{\bf F} ([\![N]\!]) \circ m_{[\![\varphi_1]\!], \dots, [\![\varphi_n]\!]} \circ (\alpha_{[\![\varphi_1]\!]} \times \dots \times \alpha_{[\![\varphi_n]\!]}) \circ \langle [\![M_1]\!], \dots, [\![M_n]\!] \rangle =& \\
      &{\bf F} ([\![N]\!]) \circ \alpha_{[\![\varphi_1]\!] \times \dots \times [\![\varphi_n]\!]} \circ \langle [\![M_1]\!], \dots, [\![M_n]\!] \rangle = \alpha_{[\![\psi]\!]} \circ [\![N]\!] \circ \langle [\![M_1]\!], \dots, [\![M_n]\!] \rangle =& \\
      &\alpha_{[\![\psi]\!]} \circ [\![\Gamma \vdash N [\vec{x} := \vec{M}] : \psi]\!] = [\![\Gamma \vdash {\bf pure \:} (N [\vec{x} := \vec{M}]) : \bigcirc \psi]\!]&
      \end{array}$
      \item $[\![\vdash {\bf let} \bigcirc \underline{\quad} = \underline {\quad} {\: \bf in \:} M : \bigcirc \varphi ]\!] = [\![\vdash {\bf pure \:} M : \bigcirc \varphi ]\!]$

      $\begin{array}{lll}
      &[\![\vdash \vdash {\bf let} \bigcirc = \underline {\quad} {\: \bf in \:} M : \bigcirc \varphi ]\!] = & \\
      &{\bf F} ([\![M]\!]) \circ u = {\bf F} ([\![M]\!]) \circ \alpha_{\mathds{1}} = \alpha_{[\![A]\!]} \circ [\![M]\!] = [\![\vdash {\bf pure \:} M : \bigcirc \varphi]\!]&
      \end{array}$
    \end{enumerate}
  \end{enumerate}
\end{proof}

The following soundness theorem follows from the lemma above and the whole construction:

\begin{theorem} Soundness

  Let $\Gamma \vdash M : \varphi$ and $M =_r N$, then $[\![\Gamma \vdash M : \varphi]\!] = [\![\Gamma \vdash N : \varphi]\!]$.
\end{theorem}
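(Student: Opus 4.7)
The plan is to reduce this statement to the preceding lemma, which already handles a single reduction step $M \to_r N$, and then lift the conclusion along the closure operators used to define $=_r$.

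First, I would extend item (2) of the preceding lemma from one-step reduction $\to_r$ to the multistep reduction $\twoheadrightarrow_r$. Given a reduction sequence $M = M_0 \to_r M_1 \to_r \dots \to_r M_k = N$, I proceed by induction on $k$. The base case $k = 0$ is trivial (the interpretation of a term is equal to itself). For the inductive step, subject reduction (proved earlier) guarantees that each intermediate $M_i$ is well-typed with $\Gamma \vdash M_i : \varphi$, so the semantic brackets $[\![\Gamma \vdash M_i : \varphi]\!]$ are well-defined; applying the preceding lemma at each step and composing equalities in the hom-set yields $[\![\Gamma \vdash M : \varphi]\!] = [\![\Gamma \vdash N : \varphi]\!]$.

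Next, I lift from $\twoheadrightarrow_r$ to its symmetric closure $=_r$. Recall that $M =_r N$ holds iff there exists a finite zig-zag $M = P_0, P_1, \dots, P_m = N$ such that for each $i$ either $P_i \twoheadrightarrow_r P_{i+1}$ or $P_{i+1} \twoheadrightarrow_r P_i$. By the previous paragraph, each edge of the zig-zag gives an equality of arrows (using symmetry of equality in $\mathcal{C}$), and again subject reduction keeps every $P_i$ typeable in $\Gamma$ with type $\varphi$, so all the interpretations live in the same hom-set $\operatorname{Hom}_{\mathcal{C}}([\![\Gamma]\!], [\![\varphi]\!])$. Chaining the equalities gives the desired conclusion.

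There is essentially no real obstacle, since the content is already packaged in the preceding lemma together with the substitution compatibility and the coherence conditions on $\langle \mathcal{C}, \mathbf{F}, \alpha \rangle$. The only point that needs attention is the well-typedness of all intermediate terms in the zig-zag; this is exactly what subject reduction supplies, and without it the semantic brackets would not a priori be defined at every step.
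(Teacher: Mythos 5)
Your overall strategy is exactly what the paper intends: the paper's ``proof'' of this theorem is a single sentence asserting that it follows from the preceding lemma, so spelling out the lift from one-step reduction to $\twoheadrightarrow_r$ and then to $=_r$ is precisely the missing routine argument, and your induction on the length of a reduction sequence, using item (2) of the lemma at each step, is correct for the $\twoheadrightarrow_r$ part.

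There is, however, one step that fails as written: in the zig-zag for $=_r$ you claim that ``subject reduction keeps every $P_i$ typeable in $\Gamma$ with type $\varphi$.'' Subject reduction only propagates typeability \emph{forwards} along reductions. On a backward edge $P_{i+1} \twoheadrightarrow_r P_i$, knowing that $P_i$ is typeable tells you nothing about $P_{i+1}$; that would require subject \emph{expansion}, which is not proved in the paper and is generally false for such calculi. A concrete failure mode: a peak $M \twoheadleftarrow_r P \twoheadrightarrow_r N$ witnesses $M =_r N$, but $P$ need not be typeable even when $M$ and $N$ both are, so $[\![\Gamma \vdash P : \varphi]\!]$ is undefined and your chain of equalities breaks at that point. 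The standard repair is available from results already in the paper: since $\twoheadrightarrow_r$ is strongly normalising and confluent, $M =_r N$ implies that $M$ and $N$ have a common reduct $Q$ (Church--Rosser); then $\Gamma \vdash Q : \varphi$ by subject reduction from $M$, and, using the implicit hypothesis that $N$ is also typeable in $\Gamma$ with type $\varphi$ (which the statement of the theorem presupposes, since its conclusion mentions $[\![\Gamma \vdash N : \varphi]\!]$), two applications of your multistep claim give $[\![\Gamma \vdash M : \varphi]\!] = [\![\Gamma \vdash Q : \varphi]\!] = [\![\Gamma \vdash N : \varphi]\!]$. With that substitution your argument is complete.
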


The completeness theorem is proved with the syntactic model. We will consider the term model for the simply-typed lambda-calculus with $\times$ and $\to$ standardly described in \cite{abramsky2010introduction}. This model is a categorical analogue of the Lindenbaum-Tarski algebra.

Let us define a binary relation on lambda-terms$\sim_{\varphi, \psi} \subseteq (\mathbb{V} \times \Lambda_{\Box})^2$ as:

\begin{center}
  $(x, M) \sim_{\varphi, \psi} (y, N) \Leftrightarrow x : \varphi \vdash M : \psi \:\: \& \:\: y : \varphi \vdash N : \psi \:\: \& \:\: M =_r N [y := x]$
\end{center}

We will denote equivalence class as $[x, M]_{\varphi, \psi} = \{ (y, N) \: | \: (x, M) \sim_{\varphi, \psi} (y, N) \}$ (we will drop indices below). Let us recall the definition of the category $\mathcal{C}(\lambda)$, a model structure for the simply-typed lambda calculus.

The category $\mathcal{C}(\lambda)$ has the class of objects defined as $\operatorname{Ob}_{\mathcal{C}(\lambda)} = \{ \hat{\varphi} \: | \: \varphi \in \mathbb{T} \} \cup \{ \mathds{1} \}$. For $\hat{\varphi},\hat{\psi} \in \operatorname{Ob}_{\mathcal{C}(\lambda)}$,
the set of morphism has the form $\operatorname{Hom}_{\mathcal{C}(\lambda)}(\hat{\varphi},\hat{\psi}) = \{ [x, M] \: | \: x : \varphi \vdash M : \psi \}$. Let $[x, M] \in \operatorname{Hom}_{\mathcal{C}(\lambda)}(\hat{\varphi},\hat{\psi})$ and $[y,N] \in \operatorname{Hom}_{\mathcal{C}(\lambda)}(\hat{\psi},\hat{\theta})$, then $[y,N] \circ [x, M] = [x, N [y := M]]$. Identity morphisms have the form $id_{\hat{\varphi}} = [x,x]$.

The category $\mathcal{C}(\lambda)$ is cartesian closed since $\mathds{1}$ is a terminal object such that $\operatorname{Hom}_{\mathcal{C}(\lambda)}(\mathds{1}, \hat{\varphi}) = \{ [\sqbullet, M] \: | \: \vdash M : \varphi \text{ is provable}\}$; $\widehat{\varphi \times \psi} = \hat{\varphi} \times \hat{\psi}$; and $\widehat{\varphi \to \psi} = \hat{\psi}^{\hat{\varphi}}$. Canonical projections are defined as $[x, \pi_i x] \in \operatorname{Hom}_{\mathcal{C}(\lambda)}(\hat{\varphi_1} \times \hat{\varphi_2},\hat{\varphi_i})$ for $i = 1, 2$. The evaluation arrow is a morphism $ev_{\hat{\varphi},\hat{\psi}} = [x, (\pi_1 x) (\pi_2 x)] \in \operatorname{Hom}_{\mathcal{C}(\lambda)}(\hat{\psi}^{\hat{\varphi}} \times \hat{\varphi}, \hat{\psi})$.

Let us define a map ${\bf F} : \mathcal{C}(\lambda) \to \mathcal{C}(\lambda)$, such that
for all $[x,M] \in \operatorname{Hom}_{\mathcal{C}(\lambda)}(\hat{\varphi},\hat{\psi}), \boxdot ([x,M]) = [y, {\bf let} \bigcirc x = y {\: \bf in \:} M] \in \operatorname{Hom}_{\mathcal{C}(\lambda)}(\boxdot \hat{\varphi}, \boxdot \hat{\psi})$.
The following functoriality condition might be easily checked with the reduction rules:
\begin{enumerate}
  \item $\boxdot(g \circ f) = \boxdot g \circ \boxdot f$;
  \item $\boxdot (id_{\hat{A}}) = id_{\boxdot \hat{A}}$.
\end{enumerate}

We define the following maps. $\eta: Id_{\mathcal{C}(\lambda)} \rightarrow {\bf F}$ such that for each $\hat{\varphi} \in \operatorname{Ob}_{\mathcal{C}(\lambda)}$ one has $\eta_{\hat{\varphi}} = [x, {\bf pure \:} x] \in \operatorname{Hom}_{\mathcal{C}(\lambda)}(\hat{A}, \boxdot \hat{A})$.
We express a monoidal transformation as $m_{\hat{\varphi}, \hat{\psi}} : {\bf F} \hat{\varphi} \times {\bf F} \hat{\psi} \to {\bf F} (\hat{\varphi} \times \hat{\psi})$ such that one has $m_{\hat{\varphi}, \hat{\psi}} = [p, {\bf let} \bigcirc x,y = \pi_1 p, \pi_2 p {\: \bf in \:} \langle x, y \rangle] \in \operatorname{Hom}_{\mathcal{C}(\lambda)}(\boxdot \hat{\varphi} \times \boxdot \hat{\psi}, \boxdot (\hat{\varphi} \times \hat{\psi}))$. Also we express  $u_{\mathds{1}}$ as $[\sqbullet, {\bf let} \bigcirc \underline{\quad} = \underline{\quad} {\: \bf in \:} \sqbullet]$.

${\bf F}$ is clearly a monoidal endofunctor. Let us check the required coherence conditions:

\begin{lemma}
  $ $

\begin{enumerate}
  \item ${\bf F}(f) \circ \alpha_{\varphi} = \alpha_{\beta} \circ f$
  \item $(m_{\hat{\varphi}, \hat{\psi}}) \circ (\alpha_{\varphi} \times \alpha_{\beta}) = \alpha_{{\varphi} \times \alpha_{\beta}}$
  \item $u_{\mathds{1}} = \eta_{\mathds{1}}$
\end{enumerate}
\end{lemma}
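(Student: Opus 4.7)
The plan is to verify each of the three identities by unfolding the syntactic definitions of $\eta$, $\boxdot$, $m$, and $u$ on representative terms, and then using the $\beta$-reduction rules for the ${\bf let \:} \bigcirc$ construct to bring both sides to the same equivalence class modulo $=_r$. Each item reduces to a single short calculation; the main ingredients are substitution (composition in $\mathcal{C}(\lambda)$), rules (2)--(3) to collapse $\pi_i \langle-,-\rangle$, and the two $\bigcirc$-specific rules (5) and (6) in the definition of $\rightarrow_\beta$.

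For the naturality square (1), I would fix $f = [x, M] \in \operatorname{Hom}(\hat\varphi, \hat\psi)$. Unfolding ${\bf F}(f) \circ \eta_{\hat\varphi}$ by the composition formula of $\mathcal{C}(\lambda)$ substitutes ${\bf pure\:} x$ for $y$ in the representative of $\boxdot f$, yielding $[x, {\bf let}\; \bigcirc\, x = {\bf pure\:} x\; {\bf in}\; M]$. Reduction rule (5) rewrites this immediately to $[x, {\bf pure\:} M]$. On the other side, $\eta_{\hat\psi} \circ f$ computes directly to $[x, {\bf pure\:} M]$, so the two arrows agree. For (2), I would first assemble the product morphism $\eta_{\hat\varphi} \times \eta_{\hat\psi}$ as $[p, \langle {\bf pure\:}(\pi_1 p), {\bf pure\:}(\pi_2 p)\rangle]$, then compose with $m_{\hat\varphi, \hat\psi} = [q, {\bf let}\; \bigcirc\, x, y = \pi_1 q, \pi_2 q\; {\bf in}\; \langle x, y\rangle]$. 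After substitution, rules (2)--(3) eliminate the outer projections, and rule (5) collapses the resulting ${\bf let}\; \bigcirc\, x, y = {\bf pure\:}(\pi_1 p), {\bf pure\:}(\pi_2 p)\; {\bf in}\; \langle x, y\rangle$ to ${\bf pure\:} \langle \pi_1 p, \pi_2 p\rangle$. For (3), the $u_{\mathds{1}}$-representative ${\bf let}\; \bigcirc\, \underline{\quad} = \underline{\quad}\; {\bf in}\; \sqbullet$ reduces by rule (6) directly to ${\bf pure\:} \sqbullet$, which is $\eta_{\mathds{1}}$.

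The only subtle step is the final identification in (2) of ${\bf pure\:}\langle \pi_1 p, \pi_2 p\rangle$ with ${\bf pure\:} p$, which requires either surjective pairing for the product in $\mathcal{C}(\lambda)$ or the observation that, as arrows in the cartesian closed term model, both expressions compose with $\pi_1$ and $\pi_2$ to the same morphisms and are therefore equal by the universal property. This is exactly the place the argument relies on the cartesian closed structure already established for $\mathcal{C}(\lambda)$ in the preceding discussion; items (1) and (3) are pure applications of the modal reduction rules and do not use anything beyond substitution.
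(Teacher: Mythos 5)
Your proposal is correct and follows essentially the same route as the paper: unfold the representatives of $\eta$, $\boxdot f$, $m$, and $u$, compose by substitution, and collapse with the $\beta$-rules (5) and (6) for ${\bf let \:} \bigcirc$. You are in fact slightly more careful than the paper at the one delicate point, the identification of ${\bf pure \:} \langle \pi_1 p, \pi_2 p \rangle$ with ${\bf pure \:} p$ in item (2), which the paper asserts without comment but which, as you note, needs surjective pairing or the universal property of the product in $\mathcal{C}(\lambda)$.
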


\begin{proof}
  $ $

\begin{enumerate}
\item

$\eta_{\hat{\psi}} \circ f = [y, {\bf pure \:} y] \circ [x, M] = [x, {\bf pure \:} y [y := M]] = [x, {\bf pure \:} M]$

From the other hand, one has:

$\begin{array}{lll}
&\boxdot f \circ \eta_{\hat{A}} = & \\
&[z, {\bf let} \bigcirc x = z {\: \bf in \:} M] \circ [x, {\bf pure \:} x] = [x, {\bf let} \bigcirc x = z {\: \bf in \:} M [z := {\bf pure \:} x]] = & \\
&[x, {\bf let} \bigcirc x = {\bf pure \:} x {\: \bf in \:} M] = [x, {\bf pure \:} M [x := x]] = [x, {\bf pure \:} M]&
\end{array}$

\item $m_{\hat{A},\hat{B}} \circ (\eta_{\hat{A}} \times \eta_{\hat{B}}) = \eta_{\hat{A} \times \hat{B}}$ \\

$\begin{array}{lll}
& m_{\hat{A},\hat{B}} \circ (\eta_{\hat{A}} \times \eta_{\hat{B}}) = & \\
& [q, {\bf let \:} \bigcirc x, y = \pi_1 q, \pi_2 q {\: \bf in \:} \langle x, y \rangle] \circ [p, \langle {\bf pure \:} (\pi_1 p), {\bf pure \:} (\pi_2 p) \rangle] = & \\
& [p, {\bf let} \bigcirc x, y = \pi_1 q, \pi_2 q {\: \bf in \:} \langle x, y \rangle [q := \langle {\bf pure \:} (\pi_1 p), {\bf pure \:} (\pi_2 p) \rangle]] = & \\
& [p, {\bf let} \bigcirc x, y = \pi_1 (\langle {\bf pure \:} (\pi_1 p), {\bf pure \:} (\pi_2 p) \rangle), \pi_2 (\langle {\bf pure \:} (\pi_1 p), {\bf pure \:} (\pi_2 p) \rangle) {\: \bf in \:} \langle x, y \rangle] = & \\
& [p, {\bf let} \bigcirc x, y = {\bf pure \:} (\pi_1 p), {\bf pure \:} (\pi_2 p) {\: \bf in \:} \langle x, y \rangle] = & \\
& [p, {\bf pure \:} (\langle x,y \rangle [x := \pi_1 p, y := \pi_2 p])] = [p, {\bf pure \:} \langle \pi_1 p, \pi_2 p \rangle] = [p, {\bf pure \:} p] = \eta_{\hat{A} \times \hat{B}}&
\end{array}$
\item Immediately.
\end{enumerate}
\end{proof}

The previous results imply completeness.

\begin{lemma}

  $\langle \mathcal{C}(\lambda), \boxdot, \eta \rangle$ is an ${\bf IEL}^{-}$-category
\end{lemma}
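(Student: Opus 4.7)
The proof is essentially an assembly of items that are either already established or follow from the preceding lemma, so the plan is to enumerate the required data for an ${\bf IEL}^{-}$-category and verify each piece in turn. First I will recall that $\mathcal{C}(\lambda)$ was shown to be cartesian closed by the standard term-model construction, so the ambient CCC structure is in place. Then $\boxdot$ was defined as an endofunctor and the functoriality identities $\boxdot(g \circ f) = \boxdot g \circ \boxdot f$ and $\boxdot(\mathrm{id}) = \mathrm{id}$ were stated, with the relevant computations being a short chain of $\beta$-reductions involving $\mathbf{let}\,\bigcirc$ and the substitution rule for such terms.

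Next I will verify that $\boxdot$ equipped with $m_{\hat{\varphi},\hat{\psi}}$ and $u_{\mathds{1}}$ is a monoidal endofunctor; this means checking the MacLane pentagon and the triangle identity for the coherence morphisms. Concretely, both diagrams reduce to an equality of equivalence classes of terms of the form $[p, \mathbf{let}\,\bigcirc \vec{x} = \vec{M}\,\mathbf{in}\,N]$, and the associated term identities follow from the associativity/commutativity pattern of the $\beta$-rule (4) for nested $\mathbf{let}\,\bigcirc$-bindings together with rule (6) when the empty binding appears, exactly mirroring the analogous verification for the ${\bf IK}$-calculus in \cite{ModalK1}. I will also check that $\eta$ is a natural transformation $\mathrm{Id}_{\mathcal{C}(\lambda)} \to \boxdot$, but this is precisely item (1) of the preceding lemma, namely the equality $[x, \mathbf{pure}\,M] = \boxdot f \circ \eta_{\hat{\varphi}}$ for $f = [x, M]$.

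Finally, the two coherence conditions that distinguish an ${\bf IEL}^{-}$-category from a mere monoidal endofunctor with a natural transformation are $u_{\mathds{1}} = \eta_{\mathds{1}}$ and $m_{\hat{\varphi},\hat{\psi}} \circ (\eta_{\hat{\varphi}} \times \eta_{\hat{\psi}}) = \eta_{\hat{\varphi} \times \hat{\psi}}$. These are items (3) and (2) of the preceding lemma, respectively, so they can simply be invoked. Packaging all of this yields the desired structure $\langle \mathcal{C}(\lambda), \boxdot, \eta \rangle$.

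The main obstacle, if any, is the explicit verification of MacLane's pentagon and the triangle identity for $m$ and $u$: these are the only clauses not directly furnished by the earlier lemma, and they require manipulating nested $\mathbf{let}\,\bigcirc$ terms via the commuting-conversion rule (4) and the empty-binding rule (6). However, these calculations are routine once one unwinds the definitions of $m$ and $u$ in the term model, and they are formally analogous to the well-known computations for the monadic metalanguage, so no essential new ideas are needed beyond what was already used to justify the functoriality of $\boxdot$.
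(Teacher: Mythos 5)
Your proposal is correct and follows essentially the same route as the paper, which leaves this lemma without an explicit proof and relies on the preceding material: the cartesian closedness of $\mathcal{C}(\lambda)$, the functoriality of $\boxdot$, the assertion that it is a monoidal endofunctor, and the coherence lemma establishing naturality of $\eta$, the compatibility with $m$, and $u_{\mathds{1}} = \eta_{\mathds{1}}$. If anything, you are more careful than the paper, which dismisses the MacLane pentagon and triangle identities with ``clearly'' where you correctly identify them as the one piece requiring an explicit (if routine) computation with nested $\mathbf{let}\,\bigcirc$ terms.
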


\section{Cover semantics of the predicate extention of ${\bf IEL}^{-}$ and its relatives}

In this section, we discuss Heyting algebras with operators and their representations with cover systems. These operators reflect the ${\bf IEL}$-like modalities.

Generally, we often study predicate extensions of modal or intuitionistic logics using Kripke frames with domains and their generalisations such as Kripke bundles or simplicial frames. See this monograph by Gabbay, Shehtman and Skrotsov for the comprehensive discussion \cite{gabbay2009quantification}.

It is difficult to claim that predicate extensions of intuitionistic modal logics are studied well. The possible reason is that intuitionistic modal predicate Kripke frames are quite far to be intuitively clear. Goldblatt proposed the alternative approach in studying predicate extensions of intuitionistic modal (and even substructural) logics based on Kripke-Joyal semantics, see \cite{goldblatt2006kripke} \cite{goldblatt2011cover}. This approach is based on the Dedekind-MacNeille completion and the representation of locales with cover systems. We discuss below the motivation of cover semantics. Goldblatt and Hodkinson also proposed a somewhat kindred approach for classical modal predicate logic using models with admissible sets \cite{goldblatt2009commutativity}.

First, we discuss the relevant background on Heyting algebras and locales.

A Heyting algebra is a bounded distributive lattice $\mathcal{H} = \langle H, \wedge, \vee, \bot, \top \rangle$ with the binary operation $\Rightarrow$ such that the following equivalence holds:

\begin{center}
  $a \wedge b \leq c$ iff $a \leq b \Rightarrow c$
\end{center}

Recall that a \emph{locale} is a complete lattice $\mathcal{L} = \langle L, \wedge, \bigvee \rangle$ such that finite infima distribute over arbitrary suprema:

\begin{center}
  $a \wedge \bigvee B = \bigvee \{ a \wedge b \: | \: b \in B \}$ for each $B \subseteq L$.
\end{center}

The notion of a locale coincides with the notion of a complete Heyting algebra since the implication is defined uniquely for each $a, b \in L$ as follows:

\begin{center}
 $a \Rightarrow b = \bigvee \{ c \in \mathcal{L} \: | \: a \wedge c \leq b \}$
\end{center}

A locale is a central object in point-free topology, where a locale is a lattice-theoretic counterpart of a topological space. The aim of this discipline is to study topology considering spaces only with the structure of their topologies as lattices of opens mentioning no points. For the further discussion, see \cite{johnstone1982stone} \cite{joyal1984extension} \cite{picado2011frames}. In usual point-set topology, we are often interested in subspaces. In point-free topology, subspaces are characterised with operators on a locale called \emph{nuclei}. A nucleus on a Heyting algebra is a multiplicative closure operator.
\begin{defin}
  A nucleus on a Heyting algebra $\mathcal{H}$ is a monotone map $j : \mathcal{H} \to\mathcal{H}$ such that the following hold for all $a, b \in \mathcal{H}$:
  \begin{enumerate}
    \item $a \leq j a$
    \item $j a = j j a$
    \item $j (a \wedge b) = j a \wedge j b$
  \end{enumerate}
\end{defin}

One may consider a nucleus operator as a lattice-theoretic analogue of a Lawvere-Tierney topology that generalises the notion of a Grothendieck topology for a presheaf topos. In its turn, Lawvere-Tierney topology contains a modal operator often called a geometric modality \cite{lawvere1970quantifiers}. Here, one may read $j \varphi$ as ``it is locally the case that $\varphi$''. The logic of Heyting algebras with a nucleus operator was studied by Goldblatt from Kripkean and topos-theoretic perspectives, see \cite{goldblatt1981grothendieck}.

It is also well-known that the set of fixpoints of a nucleus on a Heyting algebra is a Heyting subalgebra. From a point-free topological view, nuclei characterise sublocales \cite{picado2011frames}. Those operators play a tremendous role in a locale representation as well. In this monograph \cite{dragalin1988mathematical}, Dragalin showed that any complete Heyting algebra is isomorphic to the locale of fixpoints of a nucleus operator on the algebra of up-sets. Moreover, any spatial locale (the lattice of open sets) is isomorphic to the complete Heyting algebra of fixpoints of a nucleus operator generated by some Dragalin frame. We recall that a Dragalin frame is a structure that generalises both Kripke and Beth semantics of intuitionistic logic. Bezhanishvili and Holliday strengthened this result for arbitrary complete Heyting algebras, see \cite{bezhanishvili2016locales}.

Goldblatt provided the alternative way of the representation of locales \cite{goldblatt2011cover} (and even arbitrary lattices \cite{goldblatt2017representing}) with cover systems. Dragalin frames and Goldblatt cover systems might be connected with each other, but it seems that the relationship between them is not investigated yet.

We examine Goldblatt's framework closely. First of all, let us recall some relevant notions.

Let $\langle P, \leq \rangle$ be a poset. A subset $A \subseteq P$ is called \emph{upwardly closed}, if $x \in A$ and $x \leq y$ implies $y \in A$. For $A \subseteq P$, ${\uparrow A} = \{ x \in P \: | \: \exists y \in A \: y \leq x \}$.
If $x \in P$, then the \emph{cone at} $x$ is an up-set ${\uparrow x} = {\uparrow \{ x \}}$. A subset $Y \subseteq P$ refines a subset $X \subseteq P$ if $Y \subseteq {\uparrow X}$. By $\operatorname{Up}(P, \leq)$ we will mean the poset (in fact, the locale) of all upwardly closed subsets of a partial order $\langle P, \leq \rangle$. It is also clear that the set of all upwardly closed sets forms a locale.

Here we consider triples $\mathcal{S} = \langle P, \leq, \triangleright \rangle$, where $\langle P, \leq \rangle$ is a poset and $\triangleright$ is a binary relation between $P$ and $\mathcal{P}(P)$. Given $x \in P$ and $C \subseteq P$, then we say that $x$ is \emph{covered} by $C$ ($C$ is an $x$-cover), if $x \triangleright C$ ($C \triangleleft x$). Cover systems were introduced to study local truth that comes from topological and topos-theoretic intuitions. A statement is locally true concerning some object as topological space or an open subset if this object has an open cover in each member of which the statement is true. For instance, such a statement might be local equality of continuous maps \cite{goldblatt1981grothendieck}. An abstract cover system has the following definition:

\begin{defin} A triple $\mathcal{S} = \langle P, \leq, \triangleright \rangle$ as above is called cover system, if the following axioms hold for $x \in P$:

\begin{enumerate}
  \item (Existence) There exists an $x$-cover $C$ such that $C \subseteq {\uparrow x}$
  \item (Transitivity) Let $x \triangleright C$ and for each $y \in C$ $y \triangleright C_y$, then $x \triangleright \bigcup_{y \in C} C_y$
  \item (Refinement) If $x \leq y$, then any $x$-cover might be refined to a $y$-cover. That is, $C \triangleleft x$ implies that there exists an $y$-cover $C'$ such that $C' \subseteq {\uparrow C}$
\end{enumerate}
\end{defin}

Let $\mathcal{S}$ be a cover system, let us define an operator $j : \mathcal{P}(P) \to \mathcal{P}(P)$ as

\begin{center}
  $j X = \{ x \in P \: | \: \exists C \: x \triangleright C \subseteq X \}$
\end{center}

If $x \in j X$ is called a local member of $X$. A subset $X \subseteq P$ is called localised if $j X \subseteq X$. A localised up-set is called a \emph{proposition}. $\operatorname{Prop}(\mathcal{S})$ is the set of all propositions of a cover system. Goldblatt showed that such an operator is a closure operator on a locale of all up-sets \cite{goldblatt2011cover} that follows from the axioms of a cover system. According to that, a subset $X$ is a proposition iff $X = {\uparrow X} = j X$.

\begin{defin} A cover system is called localic, if the following axiom hold
  \begin{center}
    Every $x$-cover can be refined to an $x$-cover included in ${\uparrow x}$.
  \end{center}
  That is, $x \triangleright C$ implies that there exists $x \triangleright C'$ such that $C' \subseteq {\uparrow C}$ and $C' \subseteq {\uparrow x}$.
\end{defin}

This localic axiom makes such a $j$-operator a nucleus. That is, if $\mathcal{S} = \langle P, \leq, \triangleright \rangle$ is a localic cover system, then $\operatorname{Prop}(\mathcal{S})$ is a sublocale of $\operatorname{Up}(P, \leq)$ since the set of fixpoints of nucleus is a sublocale of $\operatorname{Up}(P, \leq)$. Here we strengthen the fourth axiom of a localic cover system as:

\begin{center}
  Every $x$-cover is included in ${\uparrow x}$.
\end{center}

Such a local cover system is called a \emph{strictly localic cover system}. The fourth axiom is built in such generalisations of open cover systems as Grothendieck topology and cover schemes, see \cite{bell2003cover} \cite{maclane2012sheaves}.

The representation theorem for an abritrary locale is the following one \cite{goldblatt2011cover}.

\begin{theorem} \label{LocalIso}
  Let $\mathcal{L}$ be a locale, then there exists a strictly localic cover system $\mathcal{S}$ such that
  $\mathcal{L} \cong \operatorname{Prop}(\mathcal{S})$.
\end{theorem}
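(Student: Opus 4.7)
The plan is to construct $\mathcal{S}$ directly from $\mathcal{L}$ by taking $P := \mathcal{L}$ equipped with the \emph{reversed} order ($x \leq_P y$ iff $y \leq_{\mathcal{L}} x$) and declaring $x \triangleright C$ to mean $x = \bigvee_{\mathcal{L}} C$. With this orientation, every $c \in C$ automatically satisfies $c \leq_{\mathcal{L}} x$, i.e.\ $x \leq_P c$, so the strictly localic condition $C \subseteq {\uparrow_P x}$ holds for \emph{every} cover by construction, without having to refine anything.

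First I would verify that $\mathcal{S} = \langle P, \leq_P, \triangleright \rangle$ is a strictly localic cover system. Existence is witnessed by the singleton cover $x \triangleright \{x\}$. Transitivity is the associativity of arbitrary joins in $\mathcal{L}$: if $x = \bigvee C$ and $y = \bigvee C_y$ for each $y \in C$, then $x = \bigvee \bigcup_{y \in C} C_y$. For refinement, given $x \leq_P y$ (i.e.\ $y \leq_{\mathcal{L}} x$) and $x \triangleright C$, I set $C' := \{\, y \wedge c \mid c \in C \,\}$; the defining distributive law $y \wedge \bigvee C = \bigvee\{\, y \wedge c \mid c \in C\,\}$ of a locale yields $\bigvee C' = y \wedge x = y$, while each $y \wedge c \leq_{\mathcal{L}} c$ places $C'$ inside ${\uparrow_P C}$.

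Next I would define the candidate isomorphism $\phi : \mathcal{L} \to \operatorname{Prop}(\mathcal{S})$ by $\phi(a) := \{\, x \in P \mid x \leq_{\mathcal{L}} a \,\}$. This set is an up-set in $\leq_P$ and is $j$-localised because any $x \triangleright C$ with $C \subseteq \phi(a)$ forces $x = \bigvee C \leq_{\mathcal{L}} a$. The inverse candidate is $\psi(X) := \bigvee_{\mathcal{L}} X$; the identity $a = \bigvee \phi(a)$ gives $\psi \circ \phi = \mathrm{id}_{\mathcal{L}}$ for free.

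The main obstacle, and the only step that really needs both the $j$-localisation of $X$ and the distributive law, is verifying $\phi \circ \psi = \mathrm{id}$. Given a proposition $X$ and setting $a := \bigvee X$, the inclusion $X \subseteq \phi(a)$ is immediate. For the converse I take $y \leq_{\mathcal{L}} a$ and form $C := \{\, y \wedge x \mid x \in X \,\}$: distributivity gives $\bigvee C = y \wedge a = y$, so $y \triangleright C$; and since $X$, being upward closed in $\leq_P$, is downward closed in $\leq_{\mathcal{L}}$, the relation $y \wedge x \leq_{\mathcal{L}} x \in X$ puts $C$ inside $X$. Hence $y \in jX = X$ by localisation. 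Being an order-preserving bijection between complete lattices, $\phi$ then automatically preserves all meets and joins, so it is a locale isomorphism, completing the argument.
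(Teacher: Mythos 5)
Your construction is exactly the one the paper uses (order-dualised carrier, $x \triangleright C$ iff $x = \bigvee C$, and the isomorphism $a \mapsto \{x \mid x \leq_{\mathcal{L}} a\}$, i.e.\ the principal cone $(a]$), so the proposal is correct and follows the paper's proof sketch, merely supplying the verifications of the cover-system axioms and of $\phi \circ \psi = \mathrm{id}$ that the paper leaves to the reader. No gaps.
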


\begin{proof} We provide a proof sketch in order to remain the paper self-contained.

Given a locale $\mathcal{L} = \langle L, \bigvee, \wedge \rangle$. Let us define $\mathcal{S}_{\mathcal{L}} = \langle L, \sqsubseteq, \triangleright \rangle$ such that $x \sqsubseteq y$ iff $y \leq x$ and $x \triangleright C$ iff $x = \bigvee C$ in $\mathcal{L}$.
Then $\mathcal{S}_{\mathcal{L}}$ is a localic cover system. The strictness follows from the fact that if $x \triangleright C$, that is, $x = \bigvee C$, then $C \subseteq (x] = \{ y \: | \: y \sqsubseteq x \}$. Every cone $(x] = {\uparrow x}$ is localised, thus, $(x]$ is a proposition. It is not to so difficult to see that an arbitrary proposition of $\mathcal{S}_{\mathcal{L}}$ is a downset of $\sqsubseteq$.

  The map $x \mapsto (x]$ is an isomorphism.
\end{proof}

As a consequence, one has a uniform embedding for arbitrary Heyting algebras as follows.

Given a bounded lattice $\mathcal{L}$, a \emph{completion} of $\mathcal{L}$ is a complete lattice $\overline{\mathcal{L}}$ that contains $\mathcal{L}$ as a sublattice. A completion $\overline{\mathcal{L}}$ is called \emph{Dedekind-MacNeille} if every element of $a \in \overline{\mathcal{L}}$ is both a join and meet of elements of $\mathcal{L}$ (see \cite{davey2002introduction} to read more about lattice completions):
\begin{center}
  $a = \bigvee \{ b \in \mathcal{L} \: | \: a \leq b \} = \bigwedge \{ b \in \mathcal{L} \: | \: b \leq a \}$.
\end{center}
The class of all Heyting algebras is closed under Dedekind-MacNeille completions: if $\mathcal{H}$ is a Heyting algebra, then $\overline{\mathcal{H}}$ is a locale. The implication in an arbitrary Heyting algebra $\mathcal{H}$ has an extension as follows, where $a, b \in \overline{\mathcal{H}}$ \footnote{Completions of Heyting algebras are interesting topic itself, we refer the reader to this paper \cite{harding2004macneille} for further discussion.}:
\begin{center}
  $a \Rightarrow b = \bigwedge \{ c \rightarrow d \: | \: a \geq c \in \mathcal{H} \: \& \: d \leq b \in \mathcal{H} \}$.
\end{center}

By the previous theorem, $\overline{\mathcal{H}}$ is isomorphic to the locale of propositions of a strictly localic cover system $\mathcal{S}_{\overline{\mathcal{H}}}$. That is, we have the following fact.

\begin{theorem}
  Every Heyting algebra is isomorphic to a subalgebra of propositions of a suitable strictly localic cover system.
\end{theorem}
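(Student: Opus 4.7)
The plan is to chain together the two facts set up immediately before the statement. Given a Heyting algebra $\mathcal{H}$, pass first to its Dedekind-MacNeille completion $\overline{\mathcal{H}}$, which the paper has already noted is a locale (i.e.\ a complete Heyting algebra), and then apply Theorem~\ref{LocalIso} to $\overline{\mathcal{H}}$ to obtain a strictly localic cover system $\mathcal{S}$ with $\overline{\mathcal{H}} \cong \operatorname{Prop}(\mathcal{S})$. Composing the embedding $\mathcal{H} \hookrightarrow \overline{\mathcal{H}}$ with this isomorphism realises $\mathcal{H}$ as a subalgebra of $\operatorname{Prop}(\mathcal{S})$.

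First I would record, as a short lemma or inline observation, that the canonical inclusion $\iota : \mathcal{H} \hookrightarrow \overline{\mathcal{H}}$ is a Heyting algebra embedding, not merely a lattice embedding. The fact that $\iota$ preserves finite meets, finite joins, $\top$, and $\bot$ is the defining property of a Dedekind-MacNeille completion. What needs checking is that it preserves implication: for $a, b \in \mathcal{H}$, the element $a \Rightarrow_{\mathcal{H}} b$ computed inside $\mathcal{H}$ coincides with the value of $a \Rightarrow_{\overline{\mathcal{H}}} b$ given by the extension formula
\[
a \Rightarrow_{\overline{\mathcal{H}}} b \;=\; \bigwedge \{\, c \rightarrow d \mid a \geq c \in \mathcal{H},\ d \leq b \in \mathcal{H}\,\}.
\]
Taking $c := a$ and $d := b$ shows $a \Rightarrow_{\overline{\mathcal{H}}} b \leq a \Rightarrow_{\mathcal{H}} b$; the reverse inequality follows because for any admissible $c \leq a$ and $d \geq b$ in $\mathcal{H}$ one has $a \Rightarrow_{\mathcal{H}} b \leq c \Rightarrow_{\mathcal{H}} d$ by monotonicity of the Heyting implication.

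Next, by Theorem~\ref{LocalIso} applied to the locale $\overline{\mathcal{H}}$, there is a strictly localic cover system $\mathcal{S}$ (concretely, the one built on the carrier of $\overline{\mathcal{H}}$ with $x \triangleright C$ iff $x = \bigvee C$) together with a locale isomorphism $\Phi : \overline{\mathcal{H}} \xrightarrow{\sim} \operatorname{Prop}(\mathcal{S})$ sending $x \mapsto (x]$. Since $\Phi$ is an isomorphism of complete Heyting algebras, in particular it is a Heyting algebra isomorphism. Composing with $\iota$ yields $\Phi \circ \iota : \mathcal{H} \hookrightarrow \operatorname{Prop}(\mathcal{S})$, which is injective and preserves all Heyting operations, exhibiting $\mathcal{H}$ as a subalgebra of $\operatorname{Prop}(\mathcal{S})$, as required.

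The main obstacle is the verification that $\iota$ preserves implication; the rest is routine assembly from Theorem~\ref{LocalIso} and the already-quoted closure of Heyting algebras under Dedekind-MacNeille completions. Note that the embedding is in general only a subalgebra inclusion: $\mathcal{H}$ need not be closed under the infinite joins that $\operatorname{Prop}(\mathcal{S})$ inherits from $\overline{\mathcal{H}}$, but this is exactly what the statement asserts.
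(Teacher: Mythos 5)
Your proposal is correct and follows exactly the route the paper takes: the theorem is stated there as an immediate consequence of the closure of Heyting algebras under Dedekind--MacNeille completion together with Theorem~\ref{LocalIso} applied to $\overline{\mathcal{H}}$. Your added check that the inclusion $\mathcal{H} \hookrightarrow \overline{\mathcal{H}}$ preserves implication (via the displayed extension formula) is a detail the paper leaves implicit, and it is argued correctly.
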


Strictly localic cover systems provide alternative model structures for intuitionistic predicate logic.
Let $\mathcal{S} = \langle P, \leq, \triangleright \rangle$ be a strictly localic cover system and let $D$ be a non-empty set, a domain of individuals. Let $V$ be a valuation function that maps each $k$-ary predicate letter $P$ to $V(P) : D^{k} \to \operatorname{Prop}(\mathcal{S})$.

To interpret variables, we use $D$-assignments that have the form of infinite sequences $\sigma = \langle \sigma_0, \sigma_1, \dots, \sigma_n, \dots \rangle$, where $\sigma_i \in D$ for each $i < \omega$. A $D$-assignment maps each variable $x_i$ to the corresponding $\sigma_i$. Given an assignment $\sigma$ and $d \in D$, then $\sigma(d / n)$ is a $D$-assignment obtained from $\sigma$ replacing $\sigma_n$ to $d$.

By $\operatorname{IPL}$-model we will mean a structure $\mathfrak{M} = \langle \mathcal{S}, D, V \rangle$, where $\mathcal{S}$ is a strictly localic cover system, $D$ is a domain of individuals, and $V$ is a $D$-valuation. Given a $D$-assignment and $x \in \mathcal{S}$, the truth relation $\mathfrak{S}, x, \sigma \models \varphi$ is defined inductively:

\begin{enumerate}
  \item $\mathfrak{M}, x, \sigma \Vdash P (x_{n_1}, \dots, x_{n_k})$ iff $x \in V(P)(\sigma_{n_1}, \dots, \sigma_{n_k})$.
  \item $\mathfrak{M}, x, \sigma \Vdash \bot$ iff $x \triangleright \emptyset$
  \item $\mathfrak{M}, x, \sigma \Vdash \varphi \land \psi$ iff $\mathfrak{M}, x, \sigma \Vdash \varphi$ and $\mathfrak{M}, x, \sigma \Vdash \psi$.
  \item $\mathfrak{M}, x, \sigma \Vdash \varphi \lor \psi$ iff there exists an $x$-cover $C$ such that for each $y \in C$ $\mathfrak{M}, y, \sigma \Vdash \varphi$ or $\mathfrak{M}, y, \sigma \Vdash \psi$.
  \item $\mathfrak{M}, x, \sigma \Vdash \varphi \rightarrow \psi$ iff for all $y \in {\uparrow x}$, if $\mathfrak{M}, y, \sigma \Vdash \varphi$ implies $\mathfrak{M}, y, \Vdash \psi$.
  \item $\mathfrak{M}, x, \sigma \Vdash \forall x_n \varphi$ iff for each $d \in D$, $\mathfrak{M}, x, \sigma(d / n) \Vdash \varphi$.
  \item $\mathfrak{M}, x, \sigma \Vdash \exists x_n \varphi$ iff there exist an $x$-cover $C$ and $d \in D$ such that for each $y \in C$ one has $\mathfrak{M}, y, \sigma(d/n) \Vdash \varphi$.
\end{enumerate}

Given a formula $\varphi$, one may associate a truth set $||\varphi||^{\mathfrak{M}}_{\sigma}$ defined with the locale operations on $\operatorname{Prop}(\mathcal{S})$:

\begin{enumerate}
  \item $||P (x_{n_1}, \dots, x_{n_k})||^{\mathfrak{M}}_{\sigma} = V(P)(\sigma_{n_1}, \dots, \sigma_{n_k})$
  \item $||\bot||^{\mathfrak{M}}_{\sigma} = j \emptyset$
  \item $||\varphi \land \psi||^{\mathfrak{M}}_{\sigma} = ||\varphi||^{\mathfrak{M}}_{\sigma} \cap ||\psi||^{\mathfrak{M}}_{\sigma}$
  \item $||\varphi \lor \psi||^{\mathfrak{M}}_{\sigma} = j(||\varphi||^{\mathfrak{M}}_{\sigma} \cup ||\psi||^{\mathfrak{M}}_{\sigma})$
  \item $||\varphi \rightarrow \psi||^{\mathfrak{M}}_{\sigma} = ||\varphi||^{\mathfrak{M}}_{\sigma} \Rightarrow ||\psi||^{\mathfrak{M}}_{\sigma}$
  \item $||\forall x_n \varphi||^{\mathfrak{M}}_{\sigma} = \bigwedge \limits_{d \in D} ||\varphi||^{\mathfrak{M}}_{\sigma(d/n)}$
  \item $||\exists x_n \varphi||^{\mathfrak{M}}_{\sigma} = j (\bigvee \limits_{d \in D} ||\varphi||^{\mathfrak{M}}_{\sigma(d/n)})$
\end{enumerate}
where $j$ is the associated nucleus on the locale of $\mathcal{S}$-propositions.

Thus, one has the completeness theorem \footnote{Here we note that this constuction admits generalisations and provides complete semantics for predicate substructural logics, see, e. g., \cite{goldblatt2006kripke} \cite{goldblatt2011grishin}.}:

\begin{theorem}
  Intuitionistic first-order logic is sound and complete with respect to $IPL$-models.
\end{theorem}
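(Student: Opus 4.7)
The plan is to handle soundness and completeness separately, following the familiar pattern used for algebraic/cover semantics.

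For soundness, I would proceed by induction on the length of an intuitionistic first-order derivation. The key observation is that $\operatorname{Prop}(\mathcal{S})$ is a locale, hence a complete Heyting algebra, and the clauses defining $||\varphi||^{\mathfrak{M}}_{\sigma}$ are exactly the locale operations (with the nucleus $j$ closing off disjunction and existential quantification). So each propositional axiom of IPC translates into a valid Heyting-algebra identity in $\operatorname{Prop}(\mathcal{S})$, modus ponens is justified by $a \wedge (a \Rightarrow b) \leq b$, generalization corresponds to the universal property of $\bigwedge$, and the quantifier axioms $\varphi[t/x] \to \exists x\,\varphi$ and $\forall x\,\varphi \to \varphi[t/x]$ correspond to the join/meet bounds in the complete lattice $\operatorname{Prop}(\mathcal{S})$. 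Substitution on assignments is handled by the standard assignment lemma $||\varphi[t/x_n]||^{\mathfrak{M}}_{\sigma} = ||\varphi||^{\mathfrak{M}}_{\sigma(\sigma(t)/n)}$, proved by induction on $\varphi$.

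For completeness, I would use a Henkin-style Lindenbaum construction and then pass through the Dedekind--MacNeille completion and Theorem~\ref{LocalIso}. Extend the language with countably many fresh individual constants, and by a standard maximal-consistent-set argument build a theory $T$ that contains every instance $\varphi[c/x] \to \exists x\,\varphi$ with a witnessing constant $c$, and is closed under the rules. Form the Lindenbaum--Tarski Heyting algebra $\mathcal{H}_T$ of $T$-equivalence classes of formulas (with free variables treated as constants). Apply the Dedekind--MacNeille completion to obtain a locale $\overline{\mathcal{H}_T}$, and then invoke Theorem~\ref{LocalIso} to represent it as $\operatorname{Prop}(\mathcal{S})$ for some strictly localic cover system $\mathcal{S}$. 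Let $D$ be the set of closed terms of the extended language (or the set of constants, if we prefer a pure Henkin domain), and let $V(P)(d_1,\dots,d_k)$ be the image of $[P(d_1,\dots,d_k)]$ under the embedding $\mathcal{H}_T \hookrightarrow \operatorname{Prop}(\mathcal{S})$.

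The central step is then a truth lemma: for every formula $\varphi$ and every assignment $\sigma$ mapping variables to elements of $D$, $||\varphi||^{\mathfrak{M}}_{\sigma}$ equals the image of $[\varphi\sigma]$ in $\operatorname{Prop}(\mathcal{S})$. Propositional connectives are immediate because the Dedekind--MacNeille embedding preserves finite meets, joins, and implication of $\mathcal{H}_T$, and these agree with the locale operations of $\operatorname{Prop}(\mathcal{S})$. The universal case follows from the fact that $||\forall x_n \varphi||^{\mathfrak{M}}_{\sigma} = \bigwedge_{d \in D}||\varphi||^{\mathfrak{M}}_{\sigma(d/n)}$ is a meet of elements coming from $\mathcal{H}_T$, and in $\mathcal{H}_T$ the class $[\forall x_n\,\varphi]$ is below each $[\varphi[d/x_n]]$; equality here uses the Dedekind--MacNeille characterisation $a = \bigwedge\{b \in \mathcal{H}_T : a \leq b\}$ together with the fact that whenever $[\psi]$ lies above every $[\varphi[d/x_n]]$ with $d$ a constant, then $T \vdash \psi \to \forall x_n\varphi$ by generalization on a fresh constant. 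Once the truth lemma is in hand, non-derivability of $\varphi$ means $[\varphi] < [\top]$ in $\mathcal{H}_T$, which transports to a strict inequality in $\operatorname{Prop}(\mathcal{S})$, producing a point $x$ and assignment at which $\varphi$ fails.

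The main obstacle is the existential clause: we need $||\exists x_n \varphi||^{\mathfrak{M}}_{\sigma}$, defined as $j\bigl(\bigvee_{d\in D}||\varphi||^{\mathfrak{M}}_{\sigma(d/n)}\bigr)$ in $\operatorname{Prop}(\mathcal{S})$, to coincide with the image of $[\exists x_n\varphi]$ in $\mathcal{H}_T$. One inequality is easy since each $[\varphi[d/x_n]] \leq [\exists x_n\varphi]$ in $\mathcal{H}_T$ and the embedding preserves order, and $j$ is monotone with $[\exists x_n\varphi]$ already a fixed point of $j$ (as an element of a proposition); the other direction is precisely where the Henkin witness property is needed, so that $[\exists x_n\varphi]$ is the join of the $[\varphi[c/x_n]]$ over witnessing constants $c$ already in $\mathcal{H}_T$, and hence a fortiori in $\overline{\mathcal{H}_T}$ after applying $j$. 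Handling disjunction dually requires the disjunction witness property of prime (Henkin) theories, which is secured by the same saturation argument. The soundness-and-completeness statement then follows by assembling these pieces.
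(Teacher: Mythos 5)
Your soundness half is fine and matches the intended argument: $\operatorname{Prop}(\mathcal{S})$ is a locale, the truth-set clauses are exactly the locale operations, and an assignment lemma handles substitution. The completeness half, however, takes a Henkin detour that the paper's method (spelled out in full for the analogous ${\bf QIEL}^{-}_{-}$ theorem later in the section) deliberately avoids, and the detour opens a genuine gap at the universal quantifier. The paper's route is: take the Lindenbaum--Tarski algebra of the \emph{logic itself}, with domain $D = \mathcal{V}$ the set of individual variables; observe that $[\forall x\,\varphi] = \bigwedge_{y}[\varphi[y/x]]$ and $[\exists x\,\varphi] = \bigvee_{y}[\varphi[y/x]]$ are genuine infima and suprema in that algebra (the greatest-lower-bound/least-upper-bound step uses the Bernays rules applied to a \emph{fresh variable}, which always exists because the logic's theorems are closed under renaming); then use the fact that the Dedekind--MacNeille completion preserves \emph{all} existing joins and meets, so the quantifier clauses transfer to $\overline{\mathcal{H}}$ and, via Theorem~\ref{LocalIso}, to $\operatorname{Prop}(\mathcal{S})$. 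No witnesses, no primeness, no extended language.

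Your version instead saturates a theory $T$ over a Henkin language and forms $\mathcal{H}_T$. The problem is your own key step for $\forall$: you need that whenever $[\psi]$ lies \emph{below} every $[\varphi[d/x_n]]$ (you wrote ``above,'' which is the wrong direction) one gets $T \vdash \psi \to \forall x_n\varphi$ ``by generalization on a fresh constant'' --- but after Henkin saturation there is no constant fresh for $T$, so this inference is unavailable, and it can genuinely fail: an $\omega$-incomplete prime theory can prove $\varphi[c/x]$ for every constant $c$ while not proving $\forall x\,\varphi$, in which case $[\forall x\,\varphi]_T$ is strictly below $\bigwedge_{d}[\varphi[d/x]]_T$ and the $\forall$ clause of your truth lemma breaks. (The classical fixes are either the paper's variable-domain/DM argument or a Rasiowa--Sikorski-style preservation of designated meets; your hybrid has neither.) Two smaller points: the witness schema you quote, $\varphi[c/x]\to\exists x\,\varphi$, is a theorem for every $c$ and is not the Henkin property, which is $\exists x\,\varphi \to \varphi[c/x]$; and primeness is not needed for disjunction in this semantics, since the DM embedding preserves finite joins, so $[\varphi\lor\psi] = [\varphi]\vee[\psi]$ maps to the join $j(\|\varphi\|\cup\|\psi\|)$ automatically.
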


We define modal cover systems. Suppose one has a localic cover system $\mathcal{S} = \langle S, \leq, \triangleright \rangle$. We seek to extend $\mathcal{S}$ with a binary relation $R$ on $S$ that yields an operator on $\mathcal{P}(S)$:

\begin{center}
$\langle R \rangle A = \{ x \in S \: | \: \exists y \in A \: x R y\} = R^{-1}(A)$
\end{center}

\begin{defin}
  A quadruple $\mathcal{M} = \langle S, \leq, \triangleright, R \rangle$ is called a modal cover system, if a triple $\langle S, \leq, \triangleright \rangle$ is a strictly localic cover system and the following conditions hold:

  \begin{enumerate}
    \item (Confluence) If $x \leq y$ and $x R z$, then there exists $w$ such that $y R w$ and $z \leq w$.
    \item (Modal localisation) If there exists $C$ such that $x \triangleright C \subseteq \langle R \rangle A$, then there exists $y \in R(x)$ with a $y$-cover included in $X$.
  \end{enumerate}
\end{defin}

The first condition is a general requirement for intuitionistic modal logic allowing $\langle R \rangle A$ to be an up-set whenever $A$ is. The modal localisation principle claims that $\operatorname{Prop}(\mathcal{M})$ is closed under $\langle R \rangle$.

There is a representation theorem for locales with monotone operators, see \cite{goldblatt2011cover} to have a proof in detail:
\begin{theorem} \label{ModalIso}
  Let $\mathcal{L}$ be a locale and $m : \mathcal{L} \to \mathcal{L}$ a monotone map on $\mathcal{L}$, then the algebra $\langle \mathcal{L}, m \rangle$ is isomorphic to the algebra $\langle \operatorname{Prop}(\mathcal{S}_{\mathcal{L}}), \langle R_m \rangle \rangle$
\end{theorem}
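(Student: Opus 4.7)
The plan is to piggyback on Theorem~\ref{LocalIso}: the map $\varphi: \mathcal{L} \to \operatorname{Prop}(\mathcal{S}_{\mathcal{L}})$, $x \mapsto (x]$, is already known to be a locale isomorphism, so it suffices to define a suitable binary relation $R_m$ on $\mathcal{L}$ and verify that $\varphi$ intertwines the given monotone map $m$ with the induced operator $\langle R_m \rangle$.

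First, I would define $R_m \subseteq L \times L$ by $a R_m b$ iff $a \leq m(b)$. The reason for this choice is a direct calculation: for any $x \in \mathcal{L}$, applying $\langle R_m \rangle$ to the principal proposition $(x] = \{y : y \leq x\}$ gives
\[
\langle R_m \rangle (x] = \{a : \exists b \leq x,\ a \leq m(b)\},
\]
which, by monotonicity of $m$, collapses to $\{a : a \leq m(x)\} = (m(x)] = \varphi(m(x))$. So, on the image of $\varphi$, the intertwining is automatic from the definition of $R_m$.

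Next I would check that $\mathcal{M}_{\mathcal{L}} = \langle L, \sqsubseteq, \triangleright, R_m \rangle$ is a modal cover system. The base triple is already strictly localic by Theorem~\ref{LocalIso}. Confluence is easy: if $x \sqsubseteq y$ (so $y \leq x$) and $x R_m z$ (so $x \leq m(z)$), then $w = z$ works since $y \leq x \leq m(z)$ and trivially $z \sqsubseteq z$. Modal localization is the main step to prove carefully: given $x \triangleright C$ (so $x = \bigvee C$) with $C \subseteq \langle R_m \rangle A$, for each $c \in C$ choose a witness $a_c \in A$ with $c \leq m(a_c)$, and set $C' = \{a_c : c \in C\}$ and $y = \bigvee C'$. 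Then $C' \subseteq A$ and $y \triangleright C'$, while monotonicity of $m$ gives $m(y) \geq \bigvee_{c \in C} m(a_c) \geq \bigvee_{c \in C} c = x$, so $x R_m y$, as required.

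Finally, I would combine the locale isomorphism $\varphi$ from Theorem~\ref{LocalIso} with the intertwining identity $\varphi(m(x)) = \langle R_m \rangle \varphi(x)$ established in the first step to conclude that $\varphi$ is an isomorphism of algebras $\langle \mathcal{L}, m \rangle \cong \langle \operatorname{Prop}(\mathcal{S}_{\mathcal{L}}), \langle R_m \rangle \rangle$. The main obstacle is the modal localization step, because it is the one place where the argument genuinely exploits monotonicity (rather than just the order on elements); the decisive observation is that the locale admits the join $y = \bigvee C'$ of the chosen witnesses, and monotonicity of $m$ then does the rest. Every other verification is a routine unwinding of definitions.
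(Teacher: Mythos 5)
Your proposal is correct and follows essentially the same route as the paper: defining $x \mathrel{R_m} y$ iff $x \leq m(y)$, checking that $\langle L, \sqsubseteq, \triangleright, R_m\rangle$ is a modal cover system, and using the key identity $\langle R_m \rangle (x] = (m(x)]$ to see that the isomorphism of Theorem~\ref{LocalIso} intertwines $m$ with $\langle R_m\rangle$. In fact, your verification of confluence and modal localisation supplies details that the paper's proof merely asserts, and your treatment of the reversed order $\sqsubseteq$ is consistent throughout.
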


\begin{proof}
  As we already know by Theorem~\ref{LocalIso}, $\mathcal{L} = \langle L, \bigvee, \wedge \rangle$ is isomorphic to the locale $\operatorname{Prop}(\mathcal{S}_{\mathcal{L}})$.

   $\mathcal{S}_{\mathcal{L}} = \langle L, \sqsubseteq, \triangleright \rangle$ is a strictly localic cover system, where $x \sqsubseteq y$ iff $y \leq x$ and $x \triangleright C$ iff $x = \bigvee C$ in $\mathcal{L}$. We recall that this isomorphism was established with map $x \mapsto (x] = \{ y \in L | y \sqsubseteq x\}$. Let us put $x R_m y$ iff $x \leq m y$. The relation is well-defined and the confluence and modal localisation conditions hold.

   The key observation is that $(m a] = \langle R_m \rangle (a]$.
\end{proof}

Goldblatt introduced modal cover systems to provide semantics for quantified lax logic and intuitionistic counterparts of the modal predicate logics ${\bf K}$ and ${\bf S}4$ \cite{goldblatt2011cover}. In the next subsection, we introduce similar cover systems to provide complete semantics for intuitionistic predicate modal logics with ${\bf IEL}^{-}$-like modalities.

\subsection{Prenuclei operators}

We discuss prenuclei operators, overview their use cases and provide representation for Heyting algebras with such operators via suitable modal localic cover systems. A weaker version of nuclei operators is quite helpful in point-free topology as well.

\begin{defin}
  $ $

  Let $\mathcal{H}$ be a Heyting algebra, a prenucleus on $\mathcal{H}$ is an operator
  monotone $j : \mathcal{H} \to \mathcal{H}$ such that for each $a, b \in \mathcal{H}$:

\begin{enumerate}
  \item $a \leq j a$
  \item $j a \wedge b \leq j (a \wedge b)$.
\end{enumerate}

A prenucleus is called multiplicative if it distributives over finite infima.
\end{defin}

By \emph{prenuclear algebra}, we will mean a pair $\langle \mathcal{H}, j \rangle$, where $j$ is a prenucleus on $\mathcal{H}$. A prenuclear algebra is localic if its Heyting reduct is a locale. A prenuclear algebra is \emph{multiplitcative} if its prenucleus is. Simmons calls multiplicative prenuclei merely as prenuclei \cite{simmons2010curious}, but this term is more spread for operators as defined above, see, e.g. \cite{picado2011frames}. We introduce the term ``multiplicative prenucleus'' in order to distinguish all those operators from each other since we consider both.

The use case of prenuclei operators is factorising locales considering sublocales as quotients. See the monograph by Picado and Pultr \cite{picado2011frames} for the discussion in detail. We just note that one may generate a nucleus by generating a sequence of prenuclei parametrised over ordinals.

One may involve multiplicative prenuclei to the study lattices of nuclei on a locale. Infima are defined pointwise there. Joins are more awkward to be described explicitly. Multiplicative prenuclei provide a suitable description of joins of nuclei in such locales. Multiplicative prenuclei form a complete Heyting algebra since they are closed under composition and pointwise directed joins. Thus, one may define joins of nuclei with so-called nuclear reflection, an approximation of a nucleus with prenuclei. Here we refer the reader to this paper \cite{escardo2003joins}, where this aspect has a more comprehensive explanation. The other aspect of multiplicative prenuclei were studied by Simmons \cite{simmons2010curious}.

Let us define a prenuclear cover system to have a suitable representation for prenuclear algebras.

\begin{defin} Let $\mathcal{S} = \langle S, \preceq, \triangleright, R \rangle$ be a modal cover system, then $\mathcal{S}$ is called prenuclear, if the following two conditions hold:

  \begin{enumerate}
    \item $R$ is reflexive.
    \item Let $x, y \in S$ such that $x R y$, then there exists $z \in {\uparrow y}$ such that $x \leq z$ and $x \in R(z)$.
  \end{enumerate}
\end{defin}

One may visualise the second condition with the following diagram:

\xymatrix{
&&&&&& x \ar[d]_{R} \ar@<-.5ex>@{-->}[rr]^{ ^R} \ar@<.5ex>@{-->}[rr]_{ \leq} && \exists z \\
&&&&&& y \ar@{-->}[urr]_{\leq}
}

This lemma claims that a prenuclear cover system is well-defined as follows, the similar statement was proved by Goldblatt for nuclear cover systems \cite{goldblatt2011cover}:

\begin{lemma} \label{prenucleuslemma}
  Let $\mathcal{S} = \langle P, \leq, \triangleright, R \rangle$ be a prenuclear cover system, then $\langle R \rangle$ is a prenucleus on $\operatorname{Prop}(\mathcal{S})$, that is for each $A, B \in \operatorname{Prop}(\mathcal{S})$:

  \begin{enumerate}
    \item $A \subseteq \langle R \rangle A$
    \item $A \cap \langle R \rangle B \subseteq \langle R \rangle (A \cap B)$
  \end{enumerate}
\end{lemma}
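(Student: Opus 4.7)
The plan is to derive each clause directly from one of the two defining axioms of a prenuclear cover system, using the fact that every $A \in \operatorname{Prop}(\mathcal{S})$ is upwardly closed with respect to $\leq$ (since propositions are in particular up-sets). Throughout, recall $\langle R \rangle X = \{x \in P \mid \exists y \in X \; x R y\}$.

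For clause (1), I would appeal to reflexivity of $R$: given any $x \in A$, applying the first prenuclear axiom to $x$ yields $x R x$, whence $x \in \langle R \rangle A$ by definition. So $A \subseteq \langle R \rangle A$, and this takes only a line.

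For clause (2), suppose $x \in A \cap \langle R \rangle B$. Then $x \in A$ and there is some $y \in B$ with $x R y$. I would now invoke the second prenuclear axiom on the pair $x R y$ to obtain an element $z$ with $y \leq z$, $x \leq z$, and $x R z$ (this is what the accompanying diagram depicts; I am reading the textual clause $x \in R(z)$ in the sense of the diagram, i.e.\ $x R z$). Since $A$ is a proposition it is upwardly closed, so $x \in A$ and $x \leq z$ give $z \in A$; likewise $B$ is upwardly closed, so $y \in B$ and $y \leq z$ give $z \in B$. Thus $z \in A \cap B$ with $x R z$, which by definition means $x \in \langle R \rangle (A \cap B)$.

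The only genuine subtlety — and where I would be careful — is correctly pairing the geometric content of the second prenuclear axiom with the upward closure of propositions; once the existence of the common upper bound $z$ that is also an $R$-successor of $x$ is in hand, the argument collapses. There is no real obstacle: both parts are one-line consequences of the cover-system axioms, mirroring Goldblatt's analogous treatment for nuclear cover systems in \cite{goldblatt2011cover}, and no use is made of the cover relation $\triangleright$ itself for this particular statement.
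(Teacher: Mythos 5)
Your proof is correct and follows essentially the same route as the paper's: clause (1) from reflexivity of $R$, and clause (2) by applying the second prenuclear axiom to $x R y$ to get the common $R$-successor and upper bound $z$, then using upward closure of the propositions. If anything, you are slightly more careful than the paper, which omits the explicit observation that $z \in B$ follows from $y \leq z$ and the upward closure of $B$; your reading of the clause ``$x \in R(z)$'' as $x R z$ is also the one the paper itself uses in its proof.
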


\begin{proof}
The condition $A \subseteq \langle R \rangle A$ holds according to the standard modal logic argument.

Let us check the second condition. Let $A \cap \langle R \rangle B$, then $x \in A$ and $x \in R(y)$ for some $y \in B$. $x R y$ implies there exists $z \in {\uparrow y}$ such that $x R z$ and $x \leq z$. $A$ is an up-set, then $z \in A$, so $z \in A \cap B$, but $x R z$, thus, $x \in \langle R \rangle (A \cap B)$.
\end{proof}

The lemma above allows one to extend the representation of arbitrary modal cover system described in the proof of Theorem~\ref{ModalIso} to prenuclear ones:

\begin{theorem} \label{prenuclearrepresentation}
  Every localic prenuclear algebra is isomorphic to the algebra of propositions associated with some modal prenuclear localic cover system.
\end{theorem}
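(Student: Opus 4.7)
The plan is to recycle the construction from the proof of Theorem~\ref{ModalIso} and show that the extra prenuclear axioms fall out of the prenucleus laws. Starting from a localic prenuclear algebra $\langle \mathcal{L}, m \rangle$, I would form the cover system $\mathcal{S}_{\mathcal{L}} = \langle L, \sqsubseteq, \triangleright \rangle$ with $x \sqsubseteq y$ iff $y \leq x$ and $x \triangleright C$ iff $x = \bigvee C$, and equip it with the relation $R_m$ given by $x R_m y$ iff $x \leq m y$. By Theorems~\ref{LocalIso} and~\ref{ModalIso} this is already a modal cover system and the assignment $x \mapsto (x] = \{y : y \sqsubseteq x\}$ is an isomorphism $\langle \mathcal{L}, m \rangle \to \langle \operatorname{Prop}(\mathcal{S}_{\mathcal{L}}), \langle R_m \rangle \rangle$, because the identity $(m a] = \langle R_m \rangle (a]$ was established in that proof. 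So the only genuine work is verifying that $\mathcal{S}_{\mathcal{L}}$ actually belongs to the narrower class of prenuclear cover systems.

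Reflexivity of $R_m$ is immediate: $x R_m x$ unfolds to $x \leq m x$, which is the first prenucleus axiom. For the second axiom I would, given $x R_m y$, propose the witness $z = x \wedge y$. Reading off the reversed order carefully, $z \leq y$ in $\mathcal{L}$ translates to $y \sqsubseteq z$, i.e.\ $z \in {\uparrow y}$ in $\mathcal{S}_{\mathcal{L}}$, and $z \leq x$ translates to $x \sqsubseteq z$, that is $x \leq z$ in the cover system order. It then remains to check $x R_m z$, i.e.\ $x \leq m(x \wedge y)$. Since $x \leq m y$ and trivially $x \leq x$, we have $x \leq x \wedge m y$, and applying the sub\-multiplicativity $m y \wedge x \leq m(y \wedge x)$ (the second prenucleus axiom with roles $a = y$, $b = x$) yields $x \leq m(x \wedge y) = m z$.

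With the two prenuclear cover axioms verified, Lemma~\ref{prenucleuslemma} confirms that $\langle R_m \rangle$ is a prenucleus on $\operatorname{Prop}(\mathcal{S}_{\mathcal{L}})$, and combining with the isomorphism from Theorem~\ref{ModalIso} gives the desired isomorphism of prenuclear algebras. The main obstacle I expect is purely bookkeeping: the cover system order $\sqsubseteq$ is the reverse of $\leq$ on $\mathcal{L}$, so the condition ``$z \in {\uparrow y}$ with $x \leq z$'' must be unpacked as ``$z$ lies below both $x$ and $y$ in $\mathcal{L}$'', at which point $z = x \wedge y$ is forced and the sub\-multiplicativity law of a prenucleus does precisely the work needed.
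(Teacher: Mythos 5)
Your proposal is correct and follows essentially the same route as the paper: reuse the cover system $\mathcal{S}_{\mathcal{L}}$ with $x R_m y \Leftrightarrow x \leq m y$ from Theorem~\ref{ModalIso}, get reflexivity from inflationarity, and verify the second prenuclear axiom with the witness $z = x \wedge y$ via $x \leq x \wedge m y \leq m(x \wedge y)$. Your extra care with the reversed order $\sqsubseteq$ and the explicit appeal to Lemma~\ref{prenucleuslemma} only make explicit what the paper leaves implicit.
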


\begin{proof}
Let $\mathcal{L} = \langle L, \bigvee, \wedge \rangle$ be a locale and $\mathfrak{L} = \langle \mathcal{L}, \iota \rangle$ a localic prenuclear algebra. Then $\mathcal{S}_{\mathfrak{L}} = \langle L, \sqsubseteq, \triangleright, R_{\iota} \rangle$ is a modal cover system, where $x R_{\iota} y$ iff $x \leq \iota y$.

Let us ensure that this cover system is prenuclear one. The relation is clearly reflexive, $x R_{\iota} x$ follows from the inflationary condition. The second prenuclear cover system axiom also holds. $x R_{\iota} y$, then $x \leq \iota y$. Let us put $z = x \wedge y$, then $x R_{\iota} z$ since $x \leq x \wedge \iota y \leq \iota (x \wedge y)$. $y \sqsubseteq z$ holds obviously.
\end{proof}

For extend the representation we discussed above, one needs to preserve prenuclei under the Dedekind-MacNeille completion.

Given a lattice $\mathcal{L}$ and $f : \mathcal{L} \to \mathcal{L}$ a monotone function on this lattice, let us define maps $f^{\circ}, f^{\bullet} : \overline{\mathcal{L}} \to \overline{\mathcal{L}}$ for $a \in \overline{\mathcal{L}}$:
\begin{center}
  $f^{\circ}(a) = \bigvee \{ f(x) \: | \: a \geq x \in \mathcal{L} \}$

  $f^{\bullet}(a) = \bigwedge \{ f(x) \: | \: a \leq x \in \mathcal{L} \}$
\end{center}
$f^{\circ}$ and $f^{\bullet}$ both extend $f$ and $f^{\circ} \leq f^{\bullet}$. Generally, neither $f^{\circ}$ is multiplicative nor $f^{\bullet}$, if $f$ is. One the other hand, if $f$ is a multiplicative function on a Heyting algebra, so is $f^{\circ}$, see \cite{theunissen2007macneille}.

\begin{lemma} \label{PrenucMac}
  Let $\iota$ be a prenucleus on a Heyting algebra $\mathcal{H}$, then $\iota^{\bullet}$ is a prenucleus on $\overline{\mathcal{H}}$.
\end{lemma}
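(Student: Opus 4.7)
The plan is to verify that $\iota^{\bullet}$ satisfies the three defining conditions: monotonicity, the inflationary inequality $a \leq \iota^{\bullet}(a)$, and the subdistributivity $\iota^{\bullet}(a) \wedge b \leq \iota^{\bullet}(a \wedge b)$ for all $a, b \in \overline{\mathcal{H}}$. The first two are easy. Monotonicity follows because $a \leq b$ shrinks the indexing set $\{x \in \mathcal{H} : b \leq x\} \subseteq \{x \in \mathcal{H} : a \leq x\}$, so the meet over the larger set is smaller. The inflationary property follows because every $x \in \mathcal{H}$ above $a$ satisfies $a \leq x \leq \iota(x)$ by the prenucleus property on $\mathcal{H}$, and therefore $a$ lies below the meet that defines $\iota^{\bullet}(a)$.

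The substantive step is subdistributivity. I would first reduce the claim to showing that for every $y \in \mathcal{H}$ with $a \wedge b \leq y$ one has $\iota^{\bullet}(a) \wedge b \leq \iota(y)$, since $\iota^{\bullet}(a \wedge b)$ is by definition the meet of $\iota(y)$ over exactly such $y$. Next, exploiting join-density of $\mathcal{H}$ in $\overline{\mathcal{H}}$ and the fact that $\overline{\mathcal{H}}$ is a locale, I would rewrite $b = \bigvee\{d \in \mathcal{H} : d \leq b\}$ and apply the infinite distributive law to obtain $\iota^{\bullet}(a) \wedge b = \bigvee_{d \leq b,\, d \in \mathcal{H}} (\iota^{\bullet}(a) \wedge d)$, reducing the target to the element-wise inequality $\iota^{\bullet}(a) \wedge d \leq \iota(y)$ for each such $d$.

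For fixed $d \in \mathcal{H}$ with $d \leq b$, I would take the witness $x_d := d \to y \in \mathcal{H}$. From $a \wedge d \leq a \wedge b \leq y$ in $\overline{\mathcal{H}}$, together with the preservation of implication (for $d, y \in \mathcal{H}$ the element $d \to y \in \mathcal{H}$ coincides with the localic implication $d \Rightarrow y$ in $\overline{\mathcal{H}}$), one deduces $a \leq x_d$. Hence $\iota^{\bullet}(a) \leq \iota(x_d)$ by the very definition of $\iota^{\bullet}$. Applying the prenucleus inequality of $\iota$ inside $\mathcal{H}$ gives $\iota(x_d) \wedge d \leq \iota(x_d \wedge d)$, and since $x_d \wedge d \leq y$ monotonicity of $\iota$ yields $\iota(x_d \wedge d) \leq \iota(y)$. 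Chaining these inequalities gives the required $\iota^{\bullet}(a) \wedge d \leq \iota(y)$, which finishes the reduction.

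The main obstacle I foresee is the auxiliary preservation fact that the Heyting arrow $d \to y$ computed in $\mathcal{H}$ agrees with $d \Rightarrow y$ computed in $\overline{\mathcal{H}}$ when $d,y \in \mathcal{H}$; this is what licenses the choice $x_d = d \to y$ as a bona fide element of $\mathcal{H}$ above $a$. It is proved by a short argument using join-density of $\mathcal{H}$ in $\overline{\mathcal{H}}$: any $e \in \mathcal{H}$ with $e \wedge d \leq y$ satisfies $e \leq d \to y$, so writing $d \Rightarrow y$ as a join of such $e$ shows $d \Rightarrow y \leq d \to y$, while the reverse inequality is immediate. With this observation in place, the whole argument assembles cleanly.
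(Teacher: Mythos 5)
Your proof is correct, but it is organised differently from the paper's. The paper disposes of the key inequality $a \wedge \iota^{\bullet} b \leq \iota^{\bullet}(a \wedge b)$ by citing a general lemma of Goldblatt: if $f, g$ are monotone on $\mathcal{H}$ with $f(a) \wedge g(b) \leq g(a \wedge b)$ for all $a, b \in \mathcal{H}$, then $f^{\circ}(x) \wedge g^{\bullet}(y) \leq g^{\bullet}(x \wedge y)$ on $\overline{\mathcal{H}}$; specialising $f$ to the identity (whose lower extension is the identity, by join-density) gives the claim at once. You instead prove the inequality from scratch: reduce to $\iota^{\bullet}(a) \wedge d \leq \iota(y)$ for $d \leq b$ and $a \wedge b \leq y$ with $d, y \in \mathcal{H}$, via meet-density of the codomain, join-density of $\mathcal{H}$, and the frame distributive law in $\overline{\mathcal{H}}$; then use the witness $x_d = d \to y$, whose membership in the index set of the meet defining $\iota^{\bullet}(a)$ rests on the (correctly justified) preservation of the Heyting arrow under the Dedekind--MacNeille completion. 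This is in effect an unfolding of the proof of the cited Goldblatt lemma in the special case $f = \operatorname{id}$, so what your version buys is self-containedness and an explicit display of where join-density, the locale structure of $\overline{\mathcal{H}}$, and the Heyting implication are actually used; what the paper's version buys is brevity and a pointer to the general two-function statement, which is reused elsewhere in that literature. Your treatment of monotonicity and inflationarity matches the paper's ``readily checked'' remark, and I see no gap.
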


\begin{proof}
  The proof is similar for the analogous statement about nuclei \cite{goldblatt2011cover}.
  $\iota$ is inflationary, so is $\iota^{\bullet}$, it is readily checked.
  Let us check that $a \wedge \iota^{\bullet} b \leq \iota^{\bullet}(a \wedge b)$ for each $a, b \in \overline{\mathcal{H}}$.
  It is known that, if $f, g$ are monotone functions on $\mathcal{H}$ with
  $f(a) \wedge g(b) \leq g(a \wedge b)$ for each $a, b \in \mathcal{H}$, then
  $f^{\circ}(x) \wedge g^{\bullet}(y) \leq g^{\bullet}(x \wedge y)$ for each $x, y \in \overline{\mathcal{H}}$ \cite{goldblatt2011cover}. We apply this statement putting the identity function as $f$ and $\iota$ as $g$.
\end{proof}

One may prove the following representation theorem for Heyting algebra with prenuclei operators combining Theorem~\ref{prenuclearrepresentation} and Lemma~\ref{PrenucMac}:

\begin{theorem} \label{PrenucRepr}
Every prenuclear algebra is isomorphic to the algebra of propositions obtained by some prenuclear localic cover system.
\end{theorem}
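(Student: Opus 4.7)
The plan is to combine the two preparatory results already at hand: Theorem~\ref{prenuclearrepresentation}, which handles the localic case, and Lemma~\ref{PrenucMac}, which shows that prenuclei persist under Dedekind--MacNeille completion. Given an arbitrary prenuclear algebra $\langle \mathcal{H}, \iota \rangle$, I first pass to the Dedekind--MacNeille completion $\overline{\mathcal{H}}$, which is a locale containing $\mathcal{H}$ as a Heyting subalgebra, and extend $\iota$ to the map $\iota^{\bullet} : \overline{\mathcal{H}} \to \overline{\mathcal{H}}$ defined earlier. By Lemma~\ref{PrenucMac}, $\iota^{\bullet}$ is a prenucleus, so $\langle \overline{\mathcal{H}}, \iota^{\bullet} \rangle$ is a localic prenuclear algebra.

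The next step is to verify that the inclusion $\mathcal{H} \hookrightarrow \overline{\mathcal{H}}$ respects the prenuclear structure, i.e.\ that $\iota^{\bullet}$ restricts to $\iota$ on $\mathcal{H}$. For $a \in \mathcal{H}$, on the one hand $\iota^{\bullet}(a) \leq \iota(a)$ by taking $x = a$ in the defining meet; on the other hand, monotonicity of $\iota$ gives $\iota(a) \leq \iota(x)$ for every $x \in \mathcal{H}$ with $a \leq x$, so $\iota(a) \leq \iota^{\bullet}(a)$. This makes $\langle \mathcal{H}, \iota \rangle$ a prenuclear subalgebra of $\langle \overline{\mathcal{H}}, \iota^{\bullet} \rangle$ in the sense that the embedding preserves $\wedge, \vee, \to, \bot, \top$ (standard for the Dedekind--MacNeille completion of a Heyting algebra) and the prenucleus.

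Now I apply Theorem~\ref{prenuclearrepresentation} to $\langle \overline{\mathcal{H}}, \iota^{\bullet} \rangle$ to obtain a prenuclear localic cover system $\mathcal{S}_{\overline{\mathcal{H}}} = \langle \overline{\mathcal{H}}, \sqsubseteq, \triangleright, R_{\iota^{\bullet}} \rangle$ together with an isomorphism $x \mapsto (x]$ between $\langle \overline{\mathcal{H}}, \iota^{\bullet} \rangle$ and $\langle \operatorname{Prop}(\mathcal{S}_{\overline{\mathcal{H}}}), \langle R_{\iota^{\bullet}} \rangle \rangle$. Restricting this isomorphism to the subalgebra $\mathcal{H}$ yields that $\langle \mathcal{H}, \iota \rangle$ is isomorphic to the subalgebra of propositions $\{(a] : a \in \mathcal{H}\}$ of the prenuclear localic cover system $\mathcal{S}_{\overline{\mathcal{H}}}$, which is the claim.

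The main subtlety I expect is the verification that $\iota^{\bullet}$ actually extends $\iota$ (i.e.\ the restriction check sketched above) and that the completion correctly carries the implication---both of which are essentially routine once one unpacks the formula $a \Rightarrow b = \bigwedge\{c \to d : a \geq c \in \mathcal{H},\ d \leq b \in \mathcal{H}\}$ recalled earlier. No step requires a new construction beyond what Theorem~\ref{prenuclearrepresentation} and Lemma~\ref{PrenucMac} already provide; the theorem is essentially a corollary of their combination via the Dedekind--MacNeille embedding.
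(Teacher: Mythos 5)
Your proposal is correct and follows exactly the route the paper intends: the paper gives no explicit proof of Theorem~\ref{PrenucRepr} beyond the remark that it follows by ``combining Theorem~\ref{prenuclearrepresentation} and Lemma~\ref{PrenucMac}'', and you supply precisely those details (completion, the check that $\iota^{\bullet}$ restricts to $\iota$, and restriction of the isomorphism to the image of $\mathcal{H}$). Your reading of the conclusion as an embedding onto a \emph{subalgebra} of propositions is the right one and matches the phrasing the paper itself uses in the analogous Theorem~\ref{multRepr}.
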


We consider the multiplicative case. Lower extensions respect multiplicativity and upper ones preserve inflationarity. We provide the equivalent definition of a multiplicative prenuclear algebra as follows to simplify the issue:

\begin{prop} \label{AltMult}
  Let $\mathcal{H}$ be a Heyting algebra and $j$ a monotone function that preserves finite infima, then the following are equivalent for each $a, b \in \mathcal{H}$:
\begin{enumerate}
  \item $a \leq j a$,
  \item $a \wedge j b \leq j (a \wedge b)$.
\end{enumerate}
\end{prop}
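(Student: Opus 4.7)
The plan is to prove the equivalence by establishing the two implications separately, with the entire argument resting on the multiplicativity assumption $j(a \wedge b) = j(a) \wedge j(b)$.

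For the direction $(1) \Rightarrow (2)$, I would simply observe that if $a \leq ja$ holds for every $a$, then by monotonicity of the meet operation we have $a \wedge jb \leq ja \wedge jb$, and since $j$ preserves finite infima, the right-hand side equals $j(a \wedge b)$. This is a one-line calculation.

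For the direction $(2) \Rightarrow (1)$, the natural move is to specialise $b$ to the top element $\top$. Then (2) gives $a \wedge j\top \leq j(a \wedge \top) = j a$. To conclude $a \leq ja$, one needs $j\top = \top$; this follows from $j$ preserving finite infima, interpreting ``finite infima'' to include the empty infimum (equivalently, noting that the top element is the empty meet, which multiplicative operators must fix). Given $j\top = \top$, we have $a = a \wedge \top = a \wedge j\top \leq ja$.

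The main obstacle is really a matter of conventions rather than mathematics: one must be explicit that preservation of finite infima includes the nullary case $j(\top) = \top$, since otherwise the $(2) \Rightarrow (1)$ direction fails. If the paper's convention covers only binary meets, a workaround would be to argue that multiplicativity together with inflation of $\top$ (which follows from $a \leq j a$ in any event for the forward direction) is what is really being asserted, but the cleanest reading is the one above. No induction or additional lattice-theoretic machinery is needed beyond these basic observations.
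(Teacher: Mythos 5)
Your proposal is correct and matches the paper's own proof essentially line for line: the paper establishes $(2)\Rightarrow(1)$ via $a = a \wedge \top = a \wedge j\top \leq j(a\wedge\top) = ja$ (implicitly using $j\top=\top$, exactly the convention point you flag) and $(1)\Rightarrow(2)$ via $a \wedge jb \leq ja \wedge jb = j(a\wedge b)$. Your explicit remark about the nullary meet is a reasonable clarification of a step the paper leaves tacit, but the mathematical content is identical.
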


\begin{proof}
  Both implications are quite simple.
  One has $a = a \wedge \top = a \wedge j \top \leq j (a \wedge \top) = j a$.
  On the other hand, $a \wedge j b \leq j a \wedge j b = j (a \wedge b)$.
\end{proof}

\begin{lemma} \label{MacNeilleMult}
  Let $\mathcal{H}$ be a Heyting algebra and $\iota$ a multiplicative prenucleus on $\mathcal{H}$, then $\iota^{\circ}$ is a multiplicative nucleus on $\overline{\mathcal{H}}$.
\end{lemma}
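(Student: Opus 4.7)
The plan is to verify that $\iota^{\circ}$ satisfies the three defining conditions of a nucleus on the locale $\overline{\mathcal{H}}$: inflationarity, preservation of finite meets, and idempotence. The first two conditions are routine adaptations of the machinery already used in Lemma~\ref{PrenucMac} combined with the lower-extension results cited from \cite{theunissen2007macneille}; idempotence is the step that genuinely requires the multiplicativity hypothesis on $\iota$.

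First I would check inflationarity: for $a \in \overline{\mathcal{H}}$, the Dedekind-MacNeille property gives $a = \bigvee\{x \in \mathcal{H} : x \leq a\}$, and since $x \leq \iota(x)$ for each such $x$, taking joins yields $a \leq \bigvee\{\iota(x) : x \in \mathcal{H},\, x \leq a\} = \iota^{\circ}(a)$. Next, for binary meet preservation I would invoke the result recalled immediately before the lemma: the lower extension of a monotone meet-preserving map on a Heyting algebra is meet-preserving on the Dedekind-MacNeille completion. Since $\iota$ is multiplicative by hypothesis, this delivers $\iota^{\circ}(a \wedge b) = \iota^{\circ}(a) \wedge \iota^{\circ}(b)$ for all $a, b \in \overline{\mathcal{H}}$.

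Idempotence is the main obstacle. One direction, $\iota^{\circ}(a) \leq \iota^{\circ}(\iota^{\circ}(a))$, is immediate from inflationarity applied to $\iota^{\circ}(a)$. For the reverse, I would unfold $\iota^{\circ}(\iota^{\circ}(a)) = \bigvee\{\iota(y) : y \in \mathcal{H},\, y \leq \iota^{\circ}(a)\}$ and aim to show $\iota(y) \leq \iota^{\circ}(a)$ for every such $y$. The strategy is to combine the multiplicativity of $\iota^{\circ}$ just established with the version of the prenucleus inequality provided by Proposition~\ref{AltMult} applied inside the complete Heyting algebra $\overline{\mathcal{H}}$ (namely $u \wedge \iota^{\circ}(v) \leq \iota^{\circ}(u \wedge v)$), together with the multiplicativity of $\iota$ on $\mathcal{H}$, to express $\iota(y)$ as a join bounded by elements in the defining family of $\iota^{\circ}(a)$. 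Concretely, for each $x \in \mathcal{H}$ with $x \leq a$ the meet $\iota(y) \wedge \iota(x) = \iota(y \wedge x)$ lies in the defining family of $\iota^{\circ}(a)$ (since $y \wedge x \leq x \leq a$), and one joins over $x$ to approximate $\iota(y)$ from below by $\iota^{\circ}(a)$.

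The delicate point---and the reason this is strictly stronger than Lemma~\ref{PrenucMac}---is that $\iota$ need not be idempotent on $\mathcal{H}$, so idempotence of $\iota^{\circ}$ on $\overline{\mathcal{H}}$ must be extracted from the interaction between the multiplicative identity for $\iota$ and the density of $\mathcal{H}$ in $\overline{\mathcal{H}}$. I expect the key calculation to be showing that the join $\bigvee_x \iota(y \wedge x)$, taken over $x \in \mathcal{H}$ with $x \leq a$, recovers $\iota(y) \wedge \iota^{\circ}(a)$ by distributivity in the locale $\overline{\mathcal{H}}$ and then equals $\iota(y)$ once it is pushed above $y$ via inflationarity; if this calculation fails, the statement degrades to the prenucleus conclusion of Lemma~\ref{PrenucMac}.
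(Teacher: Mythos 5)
Your treatment of inflationarity and of binary meets is correct and close to the paper's: the paper routes inflationarity through Proposition~\ref{AltMult} and verifies $x \wedge \iota^{\circ} y \leq \iota^{\circ}(x \wedge y)$ by the same kind of join computation you sketch, and it obtains multiplicativity of $\iota^{\circ}$ from the cited fact that lower extensions of multiplicative maps remain multiplicative. The gap is exactly where you suspected: idempotence cannot be proved, because it is false. A multiplicative prenucleus need not be idempotent, and then neither is its lower extension. Take $\mathcal{H}$ to be the three-element chain $0 < m < 1$ (complete, so $\overline{\mathcal{H}} = \mathcal{H}$ and $\iota^{\circ} = \iota$) with $\iota(0) = m$ and $\iota(m) = \iota(1) = 1$; this is monotone, inflationary and preserves finite meets (any monotone map on a chain does, and $\iota(1)=1$), yet $\iota\iota(0) = 1 \neq m = \iota(0)$. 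Your own key calculation also exposes the circularity: distributivity gives $\bigvee\{\iota(y \wedge x) \: | \: x \in \mathcal{H},\, x \leq a\} = \iota(y) \wedge \iota^{\circ}(a)$, and to conclude that this join equals $\iota(y)$ you would already need $\iota(y) \leq \iota^{\circ}(a)$, which is precisely the inequality being proved.

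The resolution is that the word ``nucleus'' in the statement is a typo for ``prenucleus'': the paper's own proof ends by concluding that $\iota^{\circ}$ is a multiplicative \emph{prenucleus} on $\overline{\mathcal{H}}$, never addresses idempotence, and the only downstream use (Theorem~\ref{multRepr}) requires exactly that the Dedekind--MacNeille completion of a multiplicative prenuclear algebra be a localic multiplicative prenuclear algebra. With the conclusion weakened accordingly, your first two steps already constitute a complete proof, essentially the paper's (your direct verification of $a \leq \iota^{\circ}a$ is an acceptable, slightly more elementary substitute for the paper's detour through Proposition~\ref{AltMult}); the entire third step should be dropped rather than repaired.
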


\begin{proof}
According to Proposition~\ref{AltMult}, one may equivalently replace the inflationarity condition to $a \wedge \iota b \leq \iota (a \wedge b)$. In fact, one needs to check that the inequation
$x \wedge \iota^{\circ} y \leq \iota^{\circ} (x \wedge y)$ holds for each $x, y \in \overline{\mathcal{H}}$. One has:

\vspace{\baselineskip}

$\begin{array}{lll}
& x \wedge \iota^{\circ} y = \bigvee \{ a \in \mathcal{H} \: | \: a \leq x \} \wedge \bigvee \{ \iota b \in \mathcal{H} \: | \: b \in \mathcal{H}, b \leq y \} = & \\
\:\: & \bigvee \{ a \wedge \iota b \: | \: a \leq x, b \leq y, a, b \in \mathcal{H} \} \leq \bigvee \{ \iota(a \wedge b) \: | \: a \leq x, b \leq y, a, b \in \mathcal{H} \} \leq & \\
\:\: & \bigvee \{ \iota c \: | \: c \in \mathcal{H}, c \leq x \wedge y \} = \iota^{\circ} (x \wedge y)&
\end{array}$

\vspace{\baselineskip}

$\iota^{\circ}$ is multiplicative since $\iota$ is multiplicative. Thus, $\iota^{\circ}$ is a multiplicative prenucleus on $\mathcal{H}$.
\end{proof}

Let us define a suitable cover system.

\begin{defin}
  Let $\mathcal{M} = \langle S, \leq, \triangleright, R \rangle$ be a modal cover system, then $\mathcal{M}$ is called multiplicative prenuclear if the following conditions hold:
  \begin{enumerate}
    \item $R$ is serial, that is, for each $x \in S$ there exists $y \in S$ such that $x R y$.
    \item if $x R y$ and $x R z$ then there exists $w \in {\uparrow x} \cap {\uparrow y}$ such that $x R w$.
    \item Let $x, y \in S$ such that $x R y$, then there exists $z \in {\uparrow y}$ such that $x \leq z$ and $x \in R(z)$.
  \end{enumerate}
\end{defin}

One may consider a multiplicative prenuclear frame as an $R_{\Box}$-reduct of a $\operatorname{CK}$-modal cover system \cite{goldblatt2011cover} with the additional principle that corresponds to the second postulate of a prenuclear cover system. Such a cover system describes the logic with the modal axioms $\bigcirc \top$, $\bigcirc p \land \bigcirc q \rightarrow \bigcirc (p \land q)$, and $p \land \bigcirc q \rightarrow \bigcirc (p \land q)$ plus the $\bigcirc$-monotonicity rule. It is not so hard to see that this logic is deductively equivalent to ${\bf IEL}^{-}$ over intuitionistic logic.

\begin{lemma} \label{covermult}
  Let $\mathcal{M} = \langle S, \leq, \triangleright, R \rangle$ be a multiplicative prenuclear cover system, then $\langle R \rangle$ is a multiplicative prenucleus on $\operatorname{Prop}(\mathcal{S})$.
\end{lemma}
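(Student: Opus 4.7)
The plan is to verify, for $A, B \in \operatorname{Prop}(\mathcal{S})$, three things: (i) $\langle R \rangle A \in \operatorname{Prop}(\mathcal{S})$, (ii) $\langle R \rangle$ preserves finite meets, and (iii) $\langle R \rangle$ is inflationary. Condition (i) is already part of being a modal cover system (confluence plus modal localisation), so it is only invoked. Monotonicity of $\langle R \rangle$ is automatic from its definition.

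I would first dispatch multiplicativity. The top of $\operatorname{Prop}(\mathcal{S})$ is $S$, and $\langle R \rangle S = S$ is precisely seriality (axiom 1 of a multiplicative prenuclear cover system). For binary meets, the inclusion $\langle R \rangle(A \cap B) \subseteq \langle R \rangle A \cap \langle R \rangle B$ is immediate by monotonicity. For the reverse, given $x \in \langle R \rangle A \cap \langle R \rangle B$, choose $y \in A$ and $z \in B$ with $xRy$ and $xRz$. Axiom 2 produces a common $R$-successor $w$ of $x$ that refines both $y$ and $z$; since $A$ and $B$ are propositions, hence up-sets, $w \in A \cap B$, so $x \in \langle R \rangle(A \cap B)$.

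Next I would handle inflationarity. Given $x \in A$, use seriality to obtain some $y$ with $xRy$, and then apply axiom 3 to produce $z \in {\uparrow y}$ with $x \leq z$ and $xRz$. Since $A$ is an up-set and $x \leq z$, we have $z \in A$, so $x \in \langle R \rangle A$. This mirrors the argument of Lemma~\ref{prenucleuslemma}, replacing reflexivity with the seriality plus axiom 3 combination. Alternatively, one could avoid proving inflationarity directly and instead derive the condition $A \cap \langle R \rangle B \subseteq \langle R \rangle(A \cap B)$ exactly as in Lemma~\ref{prenucleuslemma} (axiom 3 is unchanged), then invoke Proposition~\ref{AltMult} together with the multiplicativity established above to conclude inflationarity.

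I do not anticipate any substantive obstacle: each axiom of a multiplicative prenuclear cover system translates transparently into one clause of the multiplicative-prenucleus definition. The only point requiring a moment of care is matching the witness $w$ supplied by axiom 2 against membership in $A \cap B$, which works because propositions are up-sets so it suffices that $w$ lies above both $y$ and $z$.
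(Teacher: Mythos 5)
Your proof is correct and follows essentially the same route as the paper's (very terse) argument: seriality gives preservation of the top element, the second axiom gives binary multiplicativity, and the third axiom yields the prenucleus condition exactly as in Lemma~\ref{prenucleuslemma}. You usefully fill in details the paper leaves implicit --- in particular the derivation of inflationarity from seriality plus axiom~3 (or via Proposition~\ref{AltMult}), and the silent correction of the witness $w$ in axiom~2 to lie in ${\uparrow y} \cap {\uparrow z}$, which is clearly what is intended.
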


\begin{proof}
  $\langle R \rangle$ is clearly serial. The multiplicativity follows from the second postulate of a multiplicative prenuclear cover system. The third equation is proved similarly to Lemma~\ref{prenucleuslemma}.
\end{proof}

Theorem~\ref{prenuclearrepresentation}, Lemma~\ref{covermult}, and Lemma~\ref{MacNeilleMult} imply the following theorem.

\begin{theorem} \label{multRepr}
$ $

\begin{enumerate}
\item Every localic multiplicative prenuclear algebra is representable as a modal locale of the propositions obtained by a suitable modal cover system.
\item Every multiplicative prenuclear algerba is isomorphic to the subalgerba to the algebra of propositions obtained by some multiplicative prenuclear localic cover system.
\end{enumerate}
\end{theorem}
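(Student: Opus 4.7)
The plan is to assemble the theorem directly from the three ingredients cited just above its statement: Theorem~\ref{prenuclearrepresentation} builds a prenuclear cover system out of a prenuclear locale, Lemma~\ref{covermult} reads off multiplicative prenuclearity of $\langle R\rangle$ from the extra cover-system axioms, and Lemma~\ref{MacNeilleMult} transports multiplicative prenuclei along Dedekind--MacNeille completion. The proof is really a matter of checking that the multiplicativity axiom matches up with the meet-refinement clause of a multiplicative prenuclear cover system.

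For item~(1), I would take a localic multiplicative prenuclear algebra $\mathfrak{L} = \langle \mathcal{L}, \iota\rangle$ and form the canonical cover system $\mathcal{S}_{\mathfrak{L}} = \langle L, \sqsubseteq, \triangleright, R_{\iota}\rangle$ from the proof of Theorem~\ref{prenuclearrepresentation}, with $x \sqsubseteq y \Leftrightarrow y \leq x$, $x \triangleright C \Leftrightarrow x = \bigvee C$, and $x R_{\iota} y \Leftrightarrow x \leq \iota y$. The prenuclear clauses were already verified there, so what remains is the two extra axioms of a multiplicative prenuclear cover system. Seriality is immediate because multiplicativity of $\iota$ gives $\iota\top = \top$, so $x \leq \iota\top$ and $x R_{\iota} \top$ for every $x$. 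For the meet-witness clause, given $x R_{\iota} y$ and $x R_{\iota} z$ I set $w = y \wedge z$; then $w$ lies above both $y$ and $z$ in the $\sqsubseteq$-order, and $x \leq \iota y \wedge \iota z = \iota(y \wedge z) = \iota w$ by multiplicativity, so $x R_{\iota} w$. Lemma~\ref{covermult} then says $\langle R_{\iota}\rangle$ is a multiplicative prenucleus on $\operatorname{Prop}(\mathcal{S}_{\mathfrak{L}})$, and the isomorphism $x \mapsto (x]$ of Theorem~\ref{LocalIso} upgrades to an iso of multiplicative prenuclear algebras once one checks $(\iota a] = \langle R_{\iota}\rangle (a]$, which was already the key computation in the proof of Theorem~\ref{ModalIso}.

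For item~(2), I would embed an arbitrary multiplicative prenuclear algebra $\langle \mathcal{H}, \iota\rangle$ into its Dedekind--MacNeille completion $\overline{\mathcal{H}}$. By Lemma~\ref{MacNeilleMult}, the lower extension $\iota^{\circ}$ is a multiplicative prenucleus on the locale $\overline{\mathcal{H}}$, so $\langle \overline{\mathcal{H}}, \iota^{\circ}\rangle$ is a localic multiplicative prenuclear algebra to which item~(1) applies. The inclusion $\mathcal{H} \hookrightarrow \overline{\mathcal{H}}$ then realises $\langle \mathcal{H}, \iota\rangle$ as a subalgebra of $\operatorname{Prop}(\mathcal{S})$ for a suitable multiplicative prenuclear cover system $\mathcal{S}$. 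The only small check is that $\iota^{\circ}$ restricts to $\iota$ on $\mathcal{H}$: for $a \in \mathcal{H}$ the join defining $\iota^{\circ}(a) = \bigvee\{\iota x \mid a \geq x \in \mathcal{H}\}$ is already attained by $\iota(a)$ by monotonicity.

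The main obstacle is conceptual rather than computational: one must use the \emph{lower} MacNeille extension for $\iota$ so that multiplicativity survives, while simultaneously keeping the inflationary clause $a \leq \iota^{\circ} a$. This is exactly what Proposition~\ref{AltMult} buys us, since the equivalent form $a \wedge \iota b \leq \iota(a \wedge b)$ passes to $\iota^{\circ}$ by the $(f^{\circ}, g^{\bullet})$-lemma used inside Lemma~\ref{MacNeilleMult}, and multiplicativity is preserved by $(-)^{\circ}$ on Heyting algebras. With that bookkeeping done, the theorem falls out by chaining the three cited results.
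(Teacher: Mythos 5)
Your proposal is correct and follows exactly the route the paper intends: the paper gives no explicit proof of this theorem beyond asserting that Theorem~\ref{prenuclearrepresentation}, Lemma~\ref{covermult}, and Lemma~\ref{MacNeilleMult} imply it, and you chain precisely those three results, supplying the omitted verifications (seriality and the meet-witness clause for $R_{\iota}$ via $w = y \wedge z$, the identity $(\iota a] = \langle R_{\iota}\rangle (a]$, and the restriction of $\iota^{\circ}$ to $\iota$ on $\mathcal{H}$). Your remark that the \emph{lower} extension $\iota^{\circ}$ must be used here, with inflationarity recovered through Proposition~\ref{AltMult}, matches the paper's own treatment in Lemma~\ref{MacNeilleMult}.
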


Finally, we consider ${\bf IEL}$-cover systems and corresponding multiplicative prenuclear algebras, where the equation $j \bot = \bot$ is satisfied. We call such multiplicative prenuclear algebras \emph{dense}
\footnote{The terminology is due to \cite{bezhanishvili2016locales}, where a nucleus with the similar condition is called dense.}. In particular, $j \bot = \bot$ implies $j^{\circ} \bot = \bot$ \cite{theunissen2007macneille}. Thus, if an operator on a Heyting algebra is a dense multiplicative prenucleus, so is its lower Dedekind-MacNeille completion.

An ${\bf IEL}$-cover system is a multiplicative prenuclear system $\mathcal{S} = \langle P, \leq, \triangleright, R \rangle$ such that for each $x, y \in S$ if $x R y$ and $y \triangleright \emptyset$ implies $x \triangleright \emptyset$. This condition yields $\langle R \rangle \emptyset = \emptyset$.

Thus, one may immediately extend Theorem~\ref{multRepr}:

\begin{theorem} \label{ielrepresentation}
  $ $

  \begin{enumerate}
    \item Every localic dense multiplicative prenuclear algebra is isomorphic to the algerba of propositions of some ${\bf IEL}$-cover system.
    \item Every dense multiplicative prenuclear algebra is isomorphic to the subalgebra of propositions of some ${\bf IEL}$-cover system.
  \end{enumerate}
\end{theorem}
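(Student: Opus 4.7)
The plan is to follow the template set by the two previous representation results (Theorem~\ref{prenuclearrepresentation} and Theorem~\ref{multRepr}), adding one extra verification to handle the density condition $j \bot = \bot$. For part (1), starting from a localic dense multiplicative prenuclear algebra $\mathfrak{L} = \langle \mathcal{L}, \iota \rangle$, I would reuse the canonical cover system $\mathcal{S}_{\mathfrak{L}} = \langle L, \sqsubseteq, \triangleright, R_{\iota} \rangle$ with $x \sqsubseteq y$ iff $y \leq x$, $x \triangleright C$ iff $x = \bigvee C$, and $x R_{\iota} y$ iff $x \leq \iota y$. By Theorem~\ref{multRepr}, this is already a multiplicative prenuclear cover system and the map $x \mapsto (x]$ is an isomorphism of modal algebras between $\mathfrak{L}$ and $\langle \operatorname{Prop}(\mathcal{S}_{\mathfrak{L}}), \langle R_{\iota} \rangle \rangle$.

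The only new ingredient is to verify the ${\bf IEL}$-cover system condition: if $x R_{\iota} y$ and $y \triangleright \emptyset$, then $x \triangleright \emptyset$. This is a short calculation using density: $y \triangleright \emptyset$ means $y = \bigvee \emptyset = \bot$ in $\mathcal{L}$, so $x \leq \iota y = \iota \bot = \bot$, whence $x = \bot$ and $x \triangleright \emptyset$. So $\mathcal{S}_{\mathfrak{L}}$ is an ${\bf IEL}$-cover system, and by Lemma~\ref{covermult} plus the density translation $\langle R_{\iota}\rangle \emptyset = \emptyset$, the isomorphism transports the dense multiplicative prenucleus structure as required.

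For part (2), I would reduce to part (1) via MacNeille completion. Given a dense multiplicative prenuclear algebra $\langle \mathcal{H}, j \rangle$, form $\overline{\mathcal{H}}$ and extend $j$ to $j^{\circ}$ on $\overline{\mathcal{H}}$. By Lemma~\ref{MacNeilleMult}, $j^{\circ}$ is a multiplicative prenucleus on the locale $\overline{\mathcal{H}}$. Density is preserved because the cited fact from Theunissen gives $j^{\circ} \bot = \bot$ whenever $j \bot = \bot$ (the zero of $\overline{\mathcal{H}}$ agrees with the zero of $\mathcal{H}$, and $j^{\circ}(\bot) = \bigvee\{ j x \mid x \leq \bot, x \in \mathcal{H}\} = j\bot = \bot$). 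Thus $\langle \overline{\mathcal{H}}, j^{\circ}\rangle$ is a localic dense multiplicative prenuclear algebra, and part (1) applies to it. Since $\mathcal{H}$ embeds as a subalgebra of $\overline{\mathcal{H}}$ preserving all the operations including $j$, composing with the isomorphism from part (1) realises $\mathcal{H}$ as a subalgebra of $\operatorname{Prop}(\mathcal{S})$ for a suitable ${\bf IEL}$-cover system $\mathcal{S}$.

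No step looks genuinely hard; the whole argument is a routine assembly of the earlier lemmas. The only place where care is needed is the preservation of density under $\iota \mapsto \iota^{\circ}$, which is why I would explicitly invoke the MacNeille-extension identity $j^{\circ}\bot = \bot$ rather than try to rederive it. Everything else is bookkeeping inherited verbatim from Theorem~\ref{multRepr} and Theorem~\ref{prenuclearrepresentation}.
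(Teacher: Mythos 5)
Your proposal is correct and follows exactly the route the paper intends: the paper states the theorem as an immediate extension of Theorem~\ref{multRepr}, relying on precisely the two facts you verify, namely that the canonical cover system of a dense algebra satisfies the ${\bf IEL}$ condition (via $\iota\bot=\bot$) and that density survives the lower MacNeille extension ($j^{\circ}\bot=\bot$, cited from Theunissen). Your write-up simply makes explicit the bookkeeping the paper leaves implicit.
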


\subsection{Completeness theorems}

In this subsection, by ${\bf IEL}^{-}_{-}$ we mean the set of formulas that contains all theorems of intutionistic propositional logic, formulas $ \varphi \to \bigcirc \varphi$ and $\varphi \land \bigcirc \psi \to \bigcirc (\varphi \land \psi)$, and is closed under Modus Ponens, Substitution, and $\bigcirc$-monotonicity.

Let us define first the intuitionistic modal predicate logic ${\bf QIEL}^{-}_{-}$ as an extension of intuitionistic predicate logic with the modal axioms that correspond to the conditions of a prenucleus operator. We consider a signature consisting of predicate symbols of an arbitrary arity lacking function symbols and individual constants.

\begin{enumerate}
  \item ${\bf IEL}^{-}_{-}$-axioms
  \item $\forall x \varphi \to \varphi (t/x)$
  \item $\varphi (t/x) \to \exists x \varphi$
  \item The inference rules are Modus Ponens, Bernays rules, and $\bigcirc$-monotonicity.
\end{enumerate}

Then ${\bf QIEL}^{-} = {\bf QIEL}^{-}_{-} \oplus \bigcirc (\varphi \to \psi) \to (\bigcirc \varphi \to \bigcirc \psi)$ and ${\bf QIEL} = {\bf QIEL}^{-} \oplus \neg \bigcirc \bot$, where $\neg \varphi = \varphi \to \bot$.

In this section we show that the logics above are complete with respect to their suitable cover systems. Let $\mathcal{L}$ be a logic above ${\bf QIEL}^{-}$, let us define their models. Let $\mathcal{C}$ be a prenuclear cover system $\mathcal{M} = \langle S, \leq, \triangleright, R \rangle$, $V$ a valuation function, and $D$ a set of individuals, then an $\mathcal{L}$-cover model is a triple $\mathfrak{M} = \langle \mathcal{M}, V, D \rangle$. Given a $D$-assignment and $x \in S$, the modal operator has the following semantics:

\begin{center}
  $\mathfrak{M}, x, \sigma \models \bigcirc \varphi$ iff there exists $y \in R(x)$ such that $\mathcal{M}, y, \sigma \models \varphi$.
\end{center}

In contrast to Kripkean semantics of ${\bf IEL}$-like logics, we interpret modality in terms of ``possibility''. Indeed, one may reformulate the truth condition above by means of an $\langle R \rangle$-operator on the locale of propositions:

\begin{center}
  $||\bigcirc \varphi ||^{\mathfrak{M}}_{\sigma} = \langle R \rangle ||\varphi||^{\mathfrak{M}}_{\sigma}$
\end{center}
The completeness theorem converts the Lindenbaum-Tarksi algebra to a suitable locale with the relevant modal operator with the Dedekind-MacNeille completion. After that, we represent this algebra as an algebra of localised up-sets by the representation theorem. To be more precise, we have:

\begin{theorem}
  Let $\mathcal{L} \in \{ {\bf IEL}^{-}_{-}, {\bf IEL}^{-}, {\bf IEL} \}$, then ${\bf Q}\mathcal{L}$ is sound and complete with repsect to their cover systems.
\end{theorem}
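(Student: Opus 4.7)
The plan is to prove soundness and completeness separately, with soundness being essentially routine from the representation theorems already established, and completeness going through a standard Lindenbaum–Tarski construction, a Dedekind–MacNeille completion, and then the appropriate cover-system representation proved earlier (Theorem~\ref{PrenucRepr} for ${\bf QIEL}^{-}_{-}$, Theorem~\ref{multRepr} for ${\bf QIEL}^{-}$, and Theorem~\ref{ielrepresentation} for ${\bf QIEL}$).

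For soundness, I would fix an $\mathcal{L}$-cover model $\mathfrak{M} = \langle \mathcal{M}, V, D \rangle$ and verify by induction on derivations that every theorem has truth value $\top$ under every assignment. The intuitionistic axioms and quantifier axioms are validated exactly as in the non-modal Goldblatt cover-semantics setting already recalled in the excerpt. For the modal axioms, the key computations are algebraic rather than pointwise: the co-reflection axiom $\varphi \to \bigcirc \varphi$ corresponds to $\|\varphi\| \subseteq \langle R\rangle \|\varphi\|$, which is the inflationarity of a prenucleus; the axiom $\varphi \land \bigcirc \psi \to \bigcirc (\varphi \land \psi)$ corresponds to $\|\varphi\| \cap \langle R \rangle \|\psi\| \subseteq \langle R \rangle (\|\varphi\| \cap \|\psi\|)$, which is the second prenucleus axiom (Lemma~\ref{prenucleuslemma}); normality in ${\bf QIEL}^{-}$ corresponds to multiplicativity (Lemma~\ref{covermult}); and $\neg \bigcirc \bot$ in ${\bf QIEL}$ follows from $\langle R\rangle \emptyset = \emptyset$, which the ${\bf IEL}$-cover condition is designed to guarantee. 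The $\bigcirc$-monotonicity rule is validated by monotonicity of $\langle R \rangle$, and Bernays' rules by the universal property of the meet/join-closure in $\operatorname{Prop}(\mathcal{M})$.

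For completeness, I would first build the Lindenbaum–Tarski algebra $\mathcal{H}_{\mathcal{L}}$ of closed formulas modulo provable equivalence in ${\bf Q}\mathcal{L}$, equipped with the prenucleus $[\varphi] \mapsto [\bigcirc \varphi]$. By choice of $\mathcal{L}$ this is a prenuclear, multiplicative prenuclear, or dense multiplicative prenuclear Heyting algebra respectively. To deal with quantifiers in predicate logic we use the standard trick of working in a language enriched with countably many fresh constants (a Henkin-style expansion) so that $[\exists x \varphi] = \bigvee_c [\varphi(c/x)]$ and dually for $\forall$ become available in the completion. I then apply the Dedekind–MacNeille completion $\overline{\mathcal{H}_{\mathcal{L}}}$; by Lemma~\ref{PrenucMac} (respectively Lemma~\ref{MacNeilleMult}, together with the preservation of $j \bot = \bot$ under $(-)^{\circ}$ cited in the paper for the dense case), the extended modal operator is a prenucleus of the required flavour on the locale $\overline{\mathcal{H}_{\mathcal{L}}}$. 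The appropriate representation theorem then produces a cover system $\mathcal{S}$ with $\overline{\mathcal{H}_{\mathcal{L}}} \cong \operatorname{Prop}(\mathcal{S})$, and one defines the canonical valuation $V(P)(\vec{d}) := $ the image of $[P(\vec{d})]$ under this isomorphism, extended to all formulas by the inductive clauses on the locale side.

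The main obstacle is verifying the truth lemma $\|\varphi\|^{\mathfrak{M}}_{\sigma} = \text{image of } [\varphi(\sigma)]$ in the canonical model: the modal and propositional clauses follow from the isomorphism preserving the prenucleus and the locale operations, but the quantifier clauses require that the existential and universal joins/meets computed in $\overline{\mathcal{H}_{\mathcal{L}}}$ really coincide with the joins and meets given by the Henkin witnesses in the original $\mathcal{H}_{\mathcal{L}}$. This is precisely where the Dedekind–MacNeille property $a = \bigvee \{ b \in \mathcal{H}_{\mathcal{L}} \mid b \leq a\} = \bigwedge\{b \in \mathcal{H}_{\mathcal{L}} \mid a \leq b\}$ is used, together with the Bernays-style inference rules to guarantee that the syntactic $\exists$ and $\forall$ already compute joins and meets over Henkin witnesses up to provable equivalence. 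Once the truth lemma is in place, the standard argument gives: if $\nvdash_{{\bf Q}\mathcal{L}} \varphi$ then $[\varphi] \neq \top$ in $\mathcal{H}_{\mathcal{L}}$, hence in $\overline{\mathcal{H}_{\mathcal{L}}}$, so $\|\varphi\| \neq S$ in the canonical cover model and $\varphi$ is refuted at some world.
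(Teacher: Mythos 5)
Your proposal is correct and follows essentially the same route as the paper: Lindenbaum--Tarski algebra with the prenucleus $[\varphi]\mapsto[\bigcirc\varphi]$, Dedekind--MacNeille completion via Lemma~\ref{PrenucMac} (resp.\ Lemma~\ref{MacNeilleMult} and preservation of density), the corresponding representation theorem to obtain a cover system, and a truth lemma identifying $\|\varphi\|^{\mathfrak{M}}_{\sigma}$ with the image of $[\varphi]_{\sim}$, where the quantifier clauses rest on the join/meet-preservation property of the MacNeille completion. The only divergence is cosmetic: the paper avoids your Henkin-style constant expansion by taking the domain $D$ to be the set of individual variables itself, with the identity assignment, so that the syntactic $\exists$/$\forall$ directly compute the joins/meets indexed by $D$.
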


\begin{proof}
  Let us consider the ${\bf QIEL}^{-}_{-}$-case, the rest two cases are shown in the same fashion with the relevant representation theorem and the Dedekind-MacNeille completion.

  Let $\operatorname{Fm}$ be the set of all formulas and $\mathcal{V}$ the countable set of all individual variables. One has an equivalence relation $\varphi \sim \psi$ ${{\bf QIEL}^{-}_{-}} \vdash \varphi \to \psi$ and ${{\bf QIEL}^{-}_{-}} \vdash \psi \to \varphi$.

  Then, one has an ordering on $\operatorname{Fm} / \sim$ defined as $[\varphi]_{\sim} \leq [\psi]_{\sim}$ iff $\vdash_{{\bf QIEL}^{-}_{-}} \varphi \to \psi$. The operations on $\operatorname{Fm} / \sim$ are defined as:

\begin{center}
$[\bot]_{\sim} = \bot$

$[\varphi \land \psi]_{\sim} = [\varphi]_{\sim} \wedge [\psi]_{\sim}$

$[\varphi \lor \psi]_{\sim} = [\varphi]_{\sim} \lor [\psi]_{\sim}$

$[\varphi \to \psi]_{\sim} = [\varphi]_{\sim} \Rightarrow [\psi]_{\sim}$

$[\forall x \varphi]_{\sim} = \bigwedge \limits_{x \in V} [\varphi]_{\sim}$

$[\exists x \varphi]_{\sim} = \bigvee \limits_{x \in V} [\varphi]_{\sim}$

$[\bigcirc \varphi]_{\sim} = \bigcirc [\varphi]_{\sim}$

$\top = [\varphi]_{\sim}$, where ${{\bf IEL}^{-}_{-}} \vdash \varphi$
\end{center}

This algebra is clearly prenuclear, but its Heyting reduct is not necessarily complete. By Lemma~\ref{PrenucRepr}, one may embed the Lindenbaum-Tarksi algebra $\mathcal{L}_{{\bf QIEL}^{-}_{-}}$ to the localic prenuclear algebra $\langle \overline{\mathcal{F} / \sim}, \bigcirc^{\bullet} \rangle$ by Theorem~\ref{PrenucRepr}. The algebra $\langle \overline{\mathcal{F} / \sim},  \bigcirc^{\bullet} \rangle$ is isomorphic to the algebra of propositions of some prenuclear cover system by Theorem~\ref{prenuclearrepresentation}.

Thus, one has the isomorphism $f : \langle \overline{\mathcal{F} / \sim},  \bigcirc^{\bullet} \rangle \cong \langle \operatorname{Prop}(\mathcal{S}_{{\bf QIEL}^{-}_{-}}), \langle R_{\bigcirc }\rangle, \rangle$, where $\mathcal{S}_{{\bf QIEL}^{-}_{-}}$ is the obtained prenuclear cover system.

Let define a ${\bf QIEL}^{-}_{-}$ cover model $\mathfrak{M} = \langle \mathcal{S}_{{\bf QIEL}^{-}_{-}}, D, V\rangle$ putting $D = \operatorname{V}$. A valuation $V$ is defined as $V(P)(x_{n_1}, \dots, x_{n_k}) = f([P(x_{n_1}, \dots, x_{n_k})]_{\sim})$.
Here, a $D$-assignment $\sigma$ is merely an identity function.

Here, the key observation is $||\varphi||_{\sigma}^{\mathfrak{M}} = f [\varphi]_{\sim}$ that might be shown by easy induction on $\varphi$.

Then, if $\varphi$ is true in every ${\bf QIEL}^{-}_{-}$-model, then $||\varphi||_{\sigma}^{\mathfrak{M}} = \top = S$, thus, $f|\varphi| = \top$. Hence, ${\bf QIEL}^{-}_{-} \vdash \varphi$. Thus, ${\bf QIEL}^{-}_{-}$ is sound and complete with respect models on prenuclear cover systems.

The ${\bf QIEL}^{-}$ (${\bf QIEL}$) case follows from the same construction using Theorem~\ref{multRepr} (Theorem~\ref{ielrepresentation}).

\end{proof}

\section{Acknowledgements}

The author is grateful to Sergei Artemov, Lev Beklemishev, Neel Krishnaswami, Vladimir Krupski, Valerii Plisko, Anil Nerode, Ilya Shapirovsky, Valentin Shehtman, and Vladimir Vasyukov for consulting, helpful advice and conversations, feedback, and valuable suggestions. Some of the results were presented at the conference Logical Foundations of Computer Science 2020, Miami and the Computational Logic Seminar (The Graduate Centre, City University of New York).

This paper is dedicated in honour of the late Alexander Rogozin (1949 -- 2014).

The research is supported by RFBR grant 19-011-00799.

\bibliographystyle{plain}
\bibliography{Paper}

\end{document}